\newcommand{\X}{\mathcal{X}}
\newcommand{\bzt}{\boldsymbol{\zeta}}
\newcommand{\blambda}{\boldsymbol{\lambda}}
\newcommand{\bxi}{\boldsymbol{\xi}}
\newcommand{\reals}{\mathbb{R}}
\newcommand{\argmin}{\mathop{\mathrm{arg\,min}{}}}
\newcommand{\argmax}{\mathop{\mathrm{arg\,max}{}}}
\newcommand{\prox}{\mathbf{prox}}
\newcommand{\dom}{\mathrm{dom\,}}
\newcommand{\gammaInc}{\gamma_\mathrm{inc}}
\newcommand{\gammaDec}{\gamma_\mathrm{dec}}
\newcommand{\gammaSC}{\gamma_\mathrm{sc}}
\newcommand{\thetaSC}{\theta_\mathrm{sc}}
\newcommand{\Lmin}{L_\mathrm{min}}
\newcommand{\Lini}{L_\mathrm{ini}}
\newcommand{\xini}{x^{\mathrm{ini}}}
\newcommand{\ba}{{\mathbf a}}
\newcommand{\bb}{{\mathbf b}}
\newcommand{\bA}{\mathbf A}
\newcommand{\bE}{\mathbf E}
\newcommand{\bs}{{\mathbf s}}
\newcommand{\bp}{{\mathbf p}}
\newcommand{\bx}{{\mathbf x}}
\newcommand{\bz}{{\mathbf z}}
\newcommand{\by}{{\mathbf y}}
\newcommand{\bu}{{\mathbf u}}
\newcommand{\bw}{{\mathbf w}}
\newcommand{\RR}{\mathbb{R}}
\newcommand{\vc}{{\mathbf{c}}}
\newcommand{\vf}{{\mathbf{f}}}
\newcommand{\cB}{{\mathcal{B}}}
\newcommand{\vzero}{{\mathbf{0}}}
\newcommand{\vertiii}[1]{{\left\vert\kern-0.25ex\left\vert\kern-0.25ex\left\vert #1
		\right\vert\kern-0.25ex\right\vert\kern-0.25ex\right\vert}}
\newtheorem{assumption}{Assumption}
\begin{document}
	
	\title{Inexact Proximal-Point Penalty Methods for Constrained Non-Convex
Optimization}
	
	
	\author{Qihang Lin
		\and Runchao Ma
		\and Yangyang Xu}
	
	
	\institute{Qihang Lin \at
		Department of Business Analytics, University of Iowa, Iowa City, IA, 52242\\
		\email{qihang-lin@uiowa.edu}           
		\and
		Runchao Ma \at
		Department of Business Analytics, University of Iowa, Iowa City, IA, 52242\\
		\email{runchao-ma@uiowa.edu}
		\and
		Yangyang Xu \at
		Department of Mathematical Sciences, Rensselaer Polytechnic Institute, Troy, NY, 12180\\
		\email{xuy21@rpi.edu}
	}
	
	\date{Received: date / Accepted: date}

	\maketitle
	
	\begin{abstract}
		In this paper, an inexact proximal-point penalty method is studied for constrained optimization problems, where the objective function is non-convex, and the constraint functions can also be non-convex. This method approximately solves a sequence of subproblems, each of which is formed by adding to the original objective function a proximal term and quadratic penalty terms associated to the constraint functions. Under a weak-convexity assumption, each subproblem is made strongly convex and can be solved effectively to a required accuracy by an optimal gradient-based method. The computational complexity of this approach is analyzed separately for the cases of convex constraint and non-convex constraint. For both cases, the complexity results are established in terms of the number of proximal gradient steps needed to find an $\varepsilon$-stationary point. When the constraint functions are convex, we show a complexity result of $\tilde O(\varepsilon^{-5/2})$ to produce an $\varepsilon$-stationary point under the Slater's condition. When the constraint functions are non-convex, the complexity becomes $\tilde O(\varepsilon^{-3})$ if a non-singularity condition holds on constraints and otherwise $\tilde O(\varepsilon^{-4})$ if a feasible initial solution is available.
		\keywords{ Constrained optimization \and Nonconvex optimization \and Proximal-point method \and Penalty method}
	\end{abstract}
	
	\section{Introduction}
	\label{sec:intro}
	We consider the nonconvex optimization problem with inequality and equality constraints: 
	\begin{eqnarray}
		\label{eq:gco}
		\min_{\bx\in\mathbb{R}^d}f_0(\bx)+g(\bx),\quad\mathrm{s.t.}\quad \vf(\bx)\leq \vzero,~\quad \vc(\bx)=\vzero,
	\end{eqnarray}
	where  $g:\mathbb{R}^d\rightarrow\mathbb{R}\cup\{+\infty\}$, $\vf = [f_1,\ldots, f_m]$ with $f_i:\mathbb{R}^d\rightarrow\mathbb{R}$ for each $i=0,\dots,m$, and $\vc=[c_1,\ldots, c_n]$  with $c_j:\mathbb{R}^d\rightarrow\mathbb{R}$ for each $j=1,\dots,n$. We assume that $g$ is a proper lower-semicontinuous convex function with a compact domain and all other functions are continuously differentiable.

	For a general non-convex function, finding its global minimizer is intractable, and it becomes even more difficult, when there are (non-convex) constraints. Therefore, instead of finding a global minimizer of~\eqref{eq:gco}, we focus on finding a stationary point. We call a point $\bx^*\in\mathrm{dom}(g)$ 
	a \emph{stationary} point of \eqref{eq:gco}, if there are $\blambda^*\in\RR^m_+$ and $\by^*\in\mathbb{R}^n$, which exist if some constraint qualification is assumed,  such that 
	the Karush-Kuhn-Tucker (KKT) conditions hold: 
	\begin{subequations}
		\label{eq:KKT}
		\begin{align}
			\mathbf{0}\in\nabla f_0(\bx^*)+J_\vf(\bx^*)^\top\blambda^*+J_\vc(\bx^*)^\top\by^*+\partial g(\bx^*),\\
			f_i(\bx^*)\leq0, \, i=1,\ldots,m, \quad c_j(\bx^*)=0, \, j=1,\ldots, n,\\
			\lambda_i^*f_i(\bx^*)=0, \, i=1,\ldots,m,
		\end{align}
	\end{subequations}
	where $\partial g(\bx^*)$ denotes the subdifferential of $g$ at $\bx^*$,  $J_\vf(\bx^*)$ denotes the Jacobian matrix of $\vf$ at $\bx^*$, and $J_\vc(\bx^*)$ denotes the Jacobian matrix of $\vc$ at $\bx^*$. The vectors $\blambda^*$ and $\by^*$ are called Lagrangian multipliers. Due to the inevitable truncation error, it is hard to compute a solution that satisfies the above conditions exactly. 
	Numerically, it is more reasonable to pursue an approximate stationary point defined as follow. Here, $\|\cdot\|$ stands for the Euclidean norm. 
	
	\begin{definition}[$\varepsilon$-stationary point and its weak version]\label{def:eps-stationary-pt}
		Given $\varepsilon>0$, a point $\bar\bx$ is an {$\varepsilon$-stationary point} of \eqref{eq:gco} if there are $\bar\bxi\in\partial g(\bar\bx)$, $\bar\blambda\in\mathbb{R}^m_+$, and $\bar\by\in\mathbb{R}^n$ such that $\bar\lambda_i=0$ if $f_i(\bar\bx)<0$ for $i=1,\dots,m$ and
		\small
		\begin{subequations}
			\label{eq:eKKT}
			\begin{align}
				\left\|\nabla f_0(\bar\bx)+J_\vf(\bar\bx)^\top\bar\blambda+J_\vc(\bar\bx)^\top\bar \by+\bar\bxi\right\|\leq \varepsilon, \label{eq:eKKT-df}\\
				\sqrt{\|\vc(\bar\bx)\|^2+\big\|[\mathbf{f}(\bar\bx)]_+\big\|^2}\leq\varepsilon, \label{eq:eKKT-pf}\\\label{eq:ecomplementaryslackness}
				\sum_{i=1}^m|\lambda_i f_i(\bar\bx)|\leq \varepsilon.
			\end{align}
		\end{subequations}
		\normalsize
		If only \eqref{eq:eKKT-df} and \eqref{eq:eKKT-pf} hold, $\bar\bx$ is called a \emph{weak $\varepsilon$-stationary point} of \eqref{eq:gco}.	
	\end{definition}
	
	Here, the three conditions in \eqref{eq:eKKT} are $\varepsilon$-approximation of the three conditions in \eqref{eq:KKT} while the condition that $\bar\lambda_i=0$ if $f_i(\bar\bx)<0$ essentially requires the complementary slackness condition in \eqref{eq:KKT} holds exactly when $f_i(\bar\bx)<0$. When there are only equality conditions, Definition~\ref{def:eps-stationary-pt} is the same as that for the $\varepsilon$-approximate first-order solution considered in several existing papers, e.g., \cite{xie2019complexity,NIPS2019_9545}. When there are inequality constraints, the $\varepsilon$-stationary solution in Definition~\ref{def:eps-stationary-pt} is stronger than the solutions guaranteed by 
	\cite{birgin2019complexity,grapiglia2019complexity}, which do not ensure \eqref{eq:ecomplementaryslackness} but only $\bar\lambda_i=0$ if $f_i(\bar\bx)<-\varepsilon$. 
	
	Our goal is to establish the theoretical complexity of finding an $\varepsilon$-stationary point or a weak $\varepsilon$-stationary point of \eqref{eq:gco}. To achieve this goal, we consider an \emph{inexact proximal-point penalty} (iPPP) method (see Algorithm~\ref{alg:iPPP} below). Our method solves a sequence of strongly-convex unconstrained subproblems that are constructed by combining two classical techniques: the proximal-point method and the quadratic penalty method; see \eqref{eq:iALMsub} below. The \emph{adaptive accelerated proximal gradient} (AdapAPG) method by~\cite{Nesterov2013,lin2015adaptive} (see Algorithm~\ref{alg:adap-APG1}) is applied to approximately solve each subproblem. To show the complexity results, we consider two cases of \eqref{eq:gco} separately and assume different regularity conditions for them. In the first case, the problem has a weakly-convex objective (see Definition~\ref{def:weaklyconvex}) but convex constraint functions, and we assume Slater's condition. In the second case, the objective and constraint functions are all weakly convex, and we assume either a non-singularity condition (see Assumption~\ref{assume:nonconvexconstraintsingular}) or the feasibility of the initial solution (see Assumption \ref{assume:nonconvexconstraintfeasible}). 

	\subsection{Contributions} 
	\label{sec:contrib}
	We make contributions to understanding the theoretical complexity of finding an $\epsilon$-stationary point of a non-convex constrained problem in the form of \eqref{eq:gco}. Three scenarios are studied and the computational complexity of the iPPP method, measured by the number of proximal gradient steps, is established in each scenario. 
	They are summarized as follows. 
	\begin{itemize}[leftmargin=*]\setlength\itemsep{-1pt}
		\item For the case where $f_0$ is weakly convex, $f_i$ is convex for $i=1,\dots,m$, and $c_j$ is affine for $j=1,\dots,n$, we show that, when Slater's condition holds, the proposed iPPP method can find an $\varepsilon$-stationary point within $\tilde O(\varepsilon^{-5/2})$ proximal gradient steps.\footnote{Here and in the rest of paper, we suppress all logarithmic terms in $\tilde O$.} \emph{This complexity is first achieved by this paper and remains by far the best complexity for \eqref{eq:gco} under these assumptions.}
		\vspace{5pt} 
		\item When $\{f_i\}_{i=0}^m$ and $\{c_j\}_{j=1}^n$ are all weakly convex, we show that, if a non-singularity condition (see Assumption~\ref{assume:nonconvexconstraintsingular}) is satisfied by the constraint functions, the iPPP method can find an $\varepsilon$-stationary point within $\tilde O(\varepsilon^{-3})$ proximal gradient steps. 
		This complexity 
		improves the one $\tilde O(\varepsilon^{-4})$ achieved in \cite{NIPS2019_9545} that uses an inexact augmented Lagrangian method under the same assumptions.\footnote{A complexity of $\tilde O(\varepsilon^{-3})$ is claimed in Corollary 4.2 in \cite{NIPS2019_9545}. However, there is an error in its proof. The authors claimed the complexity of solving their subproblem is $O(\frac{\lambda^2_{\beta_k}\rho^2}{\epsilon_{k+1}})$ but it should be $O(\frac{\lambda^2_{\beta_k}\rho^2}{\epsilon_{k+1}^2})$. (See \cite{NIPS2019_9545} for the definitions of $\lambda_{\beta_k}$, $\rho$ and $\epsilon_{k+1}$.) After correcting this error, following the same proof they used gives a total complexity of $\tilde O(\epsilon^{-4})$ .}
		\vspace{5pt} 
		\item When $\{f_i\}_{i=0}^m$ and $\{c_j\}_{j=1}^n$ are all weakly convex, we show that, if an initial feasible solution is available (but the aforementioned non-singularity condition is not needed), the iPPP method can find a \emph{weak} $\varepsilon$-stationary point within $\tilde O(\varepsilon^{-4})$ proximal gradient steps. In Section~\ref{sec:relatedwork}, we will discuss how this result is compared with other works that also consider non-convex constraints without the non-singularity condition.
	\end{itemize}

	\subsection{Organization of the paper}
	The rest of the paper is organized as follows. In Section~\ref{sec:relatedwork}, we discuss related works on convex and non-convex constrained optimization. 
	In Section~\ref{sec:preliminary}, we introduce some definitions, notations, and a subroutine used in the proposed algorithm. Details of the proposed algorithm are described in Section~\ref{sec:algorithm}. The complexity analysis is conducted in Section~\ref{sec:penaltymethod} for the convex constrained case and in Section~\ref{sec:penaltymethodnonconvex} for the non-convex constrained case. Numerical results are presented in Section~\ref{sec:exp}, and Section~\ref{sec:conclusion} concludes the paper.
	
	\section{Related Works}
	\label{sec:relatedwork}
	There has been growing interest in first-order algorithms for non-convex minimization problems with no constraints or simple constraints~\footnote{\label{footnote}Here, simple constraints mean the constraints allow a closed-form projection onto the feasible set.} in both stochastic and deterministic settings. See, e.g., \cite{DBLP:journals/siamjo/GhadimiL13a,DBLP:journals/mp/GhadimiL16,Reddi:2016:SVR:3045390.3045425,DBLP:journals/corr/abs/1805.05411,DBLP:conf/icml/Allen-Zhu17,Davis2018,davis2018stochastic,Drusvyatskiy2018,davis2017proximally,zhang2018convergence}. However, for \eqref{eq:gco} with constraints that are not simple, these methods are not applicable. There is a long history of studies on continuous optimization with constraints. The recent works on first-order methods for convex optimization with convex constraints include~\cite{tran2014primal,yang2017richer,wei2018solving,xu2018primal,xu2017global,xu2017first,yu2017online,lin2018levelfinitesum,bayandina2018mirror} for deterministic constraints and \cite{Lan2016,yu2017simple,basu2019optimal} for stochastic constraints. Different from these works, this paper studies the problems with a non-convex objective function and with potentially non-convex constraints. 
	
	When all constraint functions in \eqref{eq:gco} are affine, 
	a primal-dual Frank-Wolfe method is proposed in \cite{wei2018primal}, and it finds an $\varepsilon$-stationary point with a complexity of $O(\varepsilon^{-3})$ in general and $O(\varepsilon^{-2})$ when there exists a strictly feasible solution. We adopt a notion of $\varepsilon$-stationary point different from that in \cite{wei2018primal}, and our constraint functions can be nonlinear and non-convex.
	
	As a classical approach for solving problems in the form of \eqref{eq:gco}, a penalty method finds an approximate solution by solving a sequence of unconstrained subproblems, where 
	the violation of constraints is penalized by the positively weighted penalty terms in the objective function of the subproblems. Unconstrained optimization techniques are then applied to the subproblems along with an updating scheme for the weighting parameters. The computational complexity of penalty methods for convex problems has been well established~\cite{lan2013iteration,necoara2019complexity,tran2019proximal}. For non-convex problems, most existing studies focus on the asymptotic convergence to a stationary point. See, e.g., ~\cite{di1985continuously,di1986exact,fletcher1983penalty,powell1986recursive,gould1989convergence,burke1991exact,byrd2005convergence}. On the contrary, we analyze the finite complexity of penalty methods for finding an $\varepsilon$-stationary point.
	
	An exact penalty method has been studied in \cite{cartis2011evaluation} as an application of a trust region method for a composite non-smooth problem. When applied to \eqref{eq:gco} with $g\equiv 0$, the method in \cite{cartis2011evaluation} either finds an $\varepsilon$-infeasible and $\varepsilon$-critical point of~\eqref{eq:gco} (see~\cite{cartis2011evaluation} for the definition) or finds a solution that is infeasible to \eqref{eq:gco} but $\varepsilon$-critical to the infeasibility measure $\sum_{j=1}^n\left |c_j(\bx) \right |+\sum_{i=1}^m\max\{f_i(\bx),0\}$. It needs to exactly solve $O(\varepsilon^{-2})$ linearized trust-region subproblems if the penalty parameter is bounded above and solve $O(\varepsilon^{-5})$ subproblems otherwise. In a subsequent study~\cite{cartis2014complexity} and its corrigendum \cite{cartis2017corrigendum}, a target-following algorithm is developed. It can find an approximate Fritz-John (instead of KKT) solution 
	with similar guarantee as~\cite{cartis2011evaluation} by solving $O(\frac{1}{\varepsilon^2})$ subproblems regardless of the boundedness of penalty parameter.
	This method has been extended to the case when $f_0$ is the expectation of a stochastic function in \cite{wang2017penalty}. We want to emphasize that the complexity result of~\cite{cartis2011evaluation,cartis2014complexity} is given in terms of the number of exactly-solved trust-region subproblems, and thus it is not exactly computational (time) complexity, especially when the subproblem is not trivially solvable. On the contrary, we directly analyze the total computational (time) complexity of the proposed method. When the constraints are non-convex, our method has complexity $\tilde O(\varepsilon^{-3})$ and $\tilde O(\varepsilon^{-4})$, respectively, when a non-singularity condition (Assumption~\ref{assume:nonconvexconstraintsingular}) is assumed and when an feasible initial solution (Assumption~\ref{assume:nonconvexconstraintfeasible}) is assumed. Neither assumption is needed in~\cite{cartis2011evaluation}. Suppose the time complexity of solving a trust-region subproblem in~\cite{cartis2011evaluation} is the same as a proximal gradient step in our method. The complexity of~\cite{cartis2011evaluation} is lower than ours if their penalty parameters are bounded and higher than ours, otherwise. Moreover, the method by~\cite{cartis2011evaluation} did not always guarantee an $\epsilon$-feasible solution while our method does, which is mainly because of Assumption~\ref{assume:nonconvexconstraintsingular} or \ref{assume:nonconvexconstraintfeasible} we make.  
	
	On solving a problem with a non-convex objective and linear constraint, \cite{kong2019complexity} has developed a quadratic-penalty accelerated
	inexact proximal point method. 
	That method can generate an $\varepsilon$-stationary point in the sense of \eqref{eq:eKKT} 
	with a complexity of $O(\varepsilon^{-3})$. Our method is similar to that in \cite{kong2019complexity} by utilizing the techniques from both the proximal point method and the quadratic penalty method. Although we make a little stronger assumption than~\cite{kong2019complexity} by requiring the boundedness of $\mathrm{dom}(g)$, our method and analysis apply to the problems with non-convex objectives and convex/non-convex nonlinear constraint functions. When the constraints are convex (but possibly nonlinear), our method can find an $\varepsilon$-stationary point with a complexity of $\tilde O(\varepsilon^{-5/2})$ that is a nearly $O(\varepsilon^{-1/2})$ improvement over the complexity in \cite{kong2019complexity}. 

	Barrier methods are another traditional class of algorithms for constrained optimization. Similar to penalty methods, they also solve a sequence of unconstrained subproblems with barrier functions added to the objective function. The barrier functions will increase to infinity as the iterates approach the boundary of the feasible set, and thus enforce the iterates to stay in the interior of the feasible set. The convergence rate of barrier methods has been studied by~\cite{tran2014inexact,tran2013composite,nesterov2011barrier,tran2018single} for convex problem. For a general non-convex problems,  most studies only focus on asymptotic convergence analysis. Recent works~\cite{haeser2019optimality,o2019log} proposed algorithms based on logarithmic barrier function for non-convex problems with only non-negative and linear constraints. They established the complexity of their algorithms for finding first-order and second-order $\varepsilon$-KKT point (whose definitions are slightly different in \cite{haeser2019optimality,o2019log} and different from our definition). However, they do not consider nonlinear constraints as we do.
	
	The augmented Lagrangian method (ALM) is another effective approach for constrained optimization. At each iteration, ALM updates the primal variable by minimizing the augmented Lagrangian function and then performs a dual gradient ascent step to update the dual variable. The iteration complexity of ALM has been established for convex problems~\cite{lan2013iteration,xu2018primal,xu2017first,xu2017global,necoara2019complexity}.  
	For non-convex problems, asymptotic convergence or local convergence rate of ALM has been studied by~\cite{friedlander2005globally,birgin2010global,wang2014convergence,curtis2016adaptive,wang2019global,birgin2018augmented,fernandez2012local}. 
	The computational complexity of ALM and its variants (e.g. ADMM) for finding an $\varepsilon$-stationary point for linearly constrained non-convex problems has been studied by  \cite{hong2016decomposing,melo2017iteration,jiang2019structured,hajinezhad2019perturbed,melo2020iterationnew,melo2020iteration,gonccalves2017convergence,zhang2020proximal,zhang2020global}. For example, the proximal inexact ALM method by~\cite{melo2020iteration} achieves complexity of $O(\varepsilon^{-5/2})$ and a related but different ALM method by~\cite{zhang2020proximal,zhang2020global} achieves complexity of $O(\varepsilon^{-2})$, the latter of which is by far the best result for nonconvex problems with linear constraints.
	
	ALM and its proximal variant are analyzed by \cite{NIPS2019_9545,birgin2019complexity,grapiglia2019complexity,xie2019complexity, li2020augmented,li2020rate,kong2020iteration} for non-convex problems with nonlinear constraints. In each main iteration of those methods, an approximate stationary point of the (proximal) augmented Lagrangian function is computed by first- or second-order methods. Utilizing the Hessian information, the methods in \cite{NIPS2019_9545,xie2019complexity} can  find a second-order $\varepsilon$-stationary point while our method cannot. Without Hessian information, the methods by \cite{NIPS2019_9545,xie2019complexity} can still find a first-order $\varepsilon$-stationary point. In \cite{NIPS2019_9545}, under a \emph{non-singularity assumption} that the smallest singular value of the Jacobian matrix of the constraint functions is uniformly bounded away from zero, it is showed that ALM finds an $\varepsilon$-stationary point with complexity of $\tilde O(\varepsilon^{-3})$. 
	However, the proof of their complexity contains an error and the actual complexity derived from their proof is $\tilde O(\varepsilon^{-4})$. 
	On the contrary, our method has a complexity of $\tilde O(\varepsilon^{-3})$ for problems with non-convex constraints under the assumptions similar to \cite{NIPS2019_9545,xie2019complexity} and only has a complexity of $\tilde O(\varepsilon^{-2.5})$ for convex constrained problems. Moreover, we consider both inequalities and equality constraints while \cite{NIPS2019_9545,xie2019complexity} only consider equality constraints. 
	In addition, even if the non-signularity assumption does not hold, our method can still find an $\varepsilon$-stationary point as long as an initial feasible solution is available. This result benefits the applications where the constraints are non-convex but have some special structure that allows finding a feasible  solution  easily (e.g. \cite{wang2019clustering} and \cite{jiang2019exact}). 
	After the release of the first draft \cite{lin2019inexact-pp} of this paper, \cite{li2020augmented} gave a hybrid of the quadratic penalty method and ALM, which also achieves an $\tilde O(\varepsilon^{-2.5})$ complexity for non-convex problems with convex constraints under the same assumptions as we make in Assumption~\ref{assume:convexconstraint}. However, \cite{li2020augmented} shows that the complexity of the pure-ALM-based first-order method is $\tilde O(\varepsilon^{-3})$, and the same-order complexity result has also been established in \cite{kong2020iteration} for a proximal ALM on solving non-convex problems with nonlinear convex constraints. \cite{li2020rate} adopts a proximal-point based subroutine and improves to $\tilde O(\varepsilon^{-3})$ the complexity result of the first-order ALM in \cite{NIPS2019_9545} for equality-constrained nonconvex problems.

	In \cite{birgin2019complexity}, the authors assume neither the non-singnularity assumption nor a feasible initial solution while are still able to achieve $O(\varepsilon^{-3})$ complexity for ALM. However, in their setting, ALM does not necessarily guarantee an $\varepsilon$-stationary point of \eqref{eq:gco} but may only return a point that is infeasible to \eqref{eq:gco} and $\varepsilon$-stationary to the infeasibility measure (similar to the guarantee by \cite{cartis2011evaluation}). The totaly number of iterations needed by ALM is also analyzed by~\cite{grapiglia2019complexity} when the constraints are linear or quadratic. However, they solve the ALM subproblems by a second-order or high-order method so their complexity per iteration is much higher than ours. Finally, we want to emphasize again that the $\varepsilon$-stationary point we consider in  Definition~\ref{def:eps-stationary-pt} is stronger than the solutions guaranteed by 
	\cite{birgin2019complexity,grapiglia2019complexity} which do not satisfy \eqref{eq:ecomplementaryslackness} and only satisfy $\bar\lambda_i=0$ when $f_i(\bar\bx)<-\varepsilon$. 
	
	In addition to \cite{NIPS2019_9545,xie2019complexity}, the algorithms by \cite{hong2018gradient,nouiehed2018convergence,lu2019snap} also utilize Hessian information to find a second-order $\varepsilon$-stationary point for linearly constrained non-convex optimization. Different from these works, we focus on finding an approximate first-order stationary point for nonlinear constrained non-convex optimization using only gradient information.

	Two recent works \cite{boob2019proximal, ma2019proximally} proposed similar algorithms for non-convex constrained optimization based on the proximal-point technique. In their approaches, a strongly convex constrained subproblem is constructed in each main iteration by adding proximal terms to the objective and constraints. When applied to non-convex smooth constrained optimization, both methods find an $\varepsilon$-stationary point in complexity of $\tilde O(\varepsilon^{-3})$. Their analysis requires (1) a feasible initial solution and (2) the uniform boundedness of the dual solutions of all subproblems. To satisfy the latter requirement,  \cite{ma2019proximally} assumes that a uniform Slater's condition holds, and \cite{boob2019proximal} assumes that the Mangasarian-Fromovitz constraint qualification holds at the limiting points of the generated iterates. On the contrary, when the non-signularity assumption holds, our method also has complexity $\tilde O(\varepsilon^{-3})$ but does not require a feasible initial solution as they do. When an initial feasible solution is indeed available, our method can be analyzed in an alternative way without the non-signularity assumption or the constraint qualification conditions required by~\cite{boob2019proximal, ma2019proximally}, although the complexity becomes $\tilde O(\varepsilon^{-4})$.

	\section{Preliminary}\label{sec:pre}
	\label{sec:preliminary}
	In this section, we provide some basic definitions and describe a numerical subroutine used within our main algorithm.
	
	\subsection{Definitions and Assumptions}
	We denote $\|\cdot\|$ as the $\ell_2$-norm and $\|\cdot\|_1$ as the $\ell_1$-norm. Let
	\small
	\begin{equation}\label{eq:dom-g}
		\mathcal{X}=\dom(g):=\{\bx\in\RR^d: g(\bx)<+\infty\}
	\end{equation}
	\normalsize
	be the domain of a function $g$. The interior and boundary of $\mathcal{X}$ are respectively denoted by $\mathrm{int}(\mathcal{X})$ and $\partial \mathcal{X}$. We use $\mathcal{N}_{\mathcal{X}}(\bx)$ for the normal cone of $\mathcal{X}$ at $\bx$. Given $a>0$, we use $\cB_a$ to represent the ball $\{\bx\in\RR^d: \|\bx\|\le a\}$. We denote $\mathbf{0}$ as an all-zero vector whose dimension is clear from the context, and $[\ba]_+=\max\{\mathbf{0}, \ba\}$ is the vector of component-wise maximum between $\mathbf{0}$ and $\ba$. For a convex set $\mathcal{S}$, we use $\mathrm{dist}(\bx,\mathcal{S})=\min_{\by\in \mathcal{S}}\|\by-\bx\|$ for the distance of $\bx$ to $\mathcal{S}$. For any $\bx\in\RR^d$, $J_\vf(\bx)=[\nabla f_1(\bx),\ldots, \nabla f_m(\bx)]^\top\in\RR^{m\times d}$ and $J_\vc(\bx)=[\nabla c_1(\bx),\ldots, \nabla c_n(\bx)]^\top\in\RR^{n\times d}$ denote the Jacobian matrices of $\vf$ and $\vc$ at $\bx$, respectively.
	
	We adopt the following definitions. 
	
	\begin{definition}[subdifferential]\label{def:subdiff} Given a proper lower-semicontinuous convex function $h:\reals^d\rightarrow \reals\cup\{+\infty\}$, its subdifferential at any $\bx$ in the domain is defined as
		\small
		\begin{align*}
			\partial h(\bx)=
			\big\{\bzt\in\mathbb{R}^d\,\big| \,
			h(\bx')
			\geq h(\bx)+\bzt^\top(\bx'-\bx), \forall \bx'\in\RR^d
			\big\},
		\end{align*}
		\normalsize
		and each $\bzt\in\partial h(\bx)$ is called a subgradient of $h$ at $\bx$. 
	\end{definition}
	
	\begin{definition}[$L$-smoothness]
		\label{def:smooth}
		A function $h:\reals^d\rightarrow \reals$ is $L$-smooth if it is differentiable on $\RR^d$ and satisfies 
		\small
		\begin{eqnarray}
			\label{eq:L}
			\textstyle	h(\bx) \leq h(\bx')+\left\langle \nabla h(\bx'),\bx-\bx' \right\rangle+\frac{L}{2}\|\bx'-\bx\|^2, \, \forall\, \bx, \bx'\in\RR^d.
		\end{eqnarray} 
		\normalsize
	\end{definition}
	
	\begin{definition}[$\mu$-strong convexity]
		\label{def:stronglyconvex}
		A function $h:\reals^d\rightarrow \reals\cup\{+\infty\}$ is $\mu$-strongly convex for $\mu\geq 0$ if $h-\frac{\mu}{2}\|\cdot\|^2$ is convex. When $\mu=0$, $\mu$-strong convexity is reduced to convexity. 
	\end{definition}
	
	\begin{definition}[$\rho$-weak convexity]
		\label{def:weaklyconvex}
		A function $h:\reals^d\rightarrow \reals\cup\{+\infty\}$ is $\rho$-weakly convex for $\rho\geq 0$ if $h+\frac{\rho}{2}\|\cdot\|^2$ is convex. When $\rho=0$, $\rho$-weak convexity is reduced to convexity.
	\end{definition}
	
	When $h$ is smooth and $\rho$-weakly convex, it holds that for any $\bx$ and $\bx'$, 
	\small
	\begin{eqnarray}
		\label{eq:rho}
		h(\bx) \geq h(\bx')+\left\langle \nabla h(\bx'),\bx-\bx' \right\rangle-\frac{\rho}{2}\|\bx'-\bx\|^2.
	\end{eqnarray} 
	\normalsize
	
	\begin{definition}[proximal mapping]\label{def:prox-map}
		Given a proper lower-semicontinuous convex function $h:\reals^d\rightarrow \reals\cup\{+\infty\}$, its \emph{proximal mapping} at $\bx$ is defined as
		\small
		$$
		\textstyle	\prox_{h}(\bx)=\argmin_{\bz\in\mathbb{R}^d}\left\{ h(\bz)+\frac{1}{2}\|\bz-\bx\|^2\right\}.
		$$
		\normalsize
	\end{definition}

	The following assumption on problem \eqref{eq:gco} is made throughout the paper.
	\begin{assumption}
		\label{assume:stochastic}
		The following statements hold:
		\begin{itemize}
			\item[A.] $f_i$ is $L_{f_i}$-smooth with $L_{f_i}\geq0$ for $i=0,1,\dots,m$; $c_j$ is $L_{c_j}$-smooth with $L_{c_j}\geq0$ for $j=1,\dots,n$;
			\item[B.] $\X$ is compact, and its diameter is denoted by $D=\max_{\bx,\bx'\in\X}\|\bx-\bx'\|$;
			\item[C.] 
			There exist constants $G$ and $M$ such that $|g(\bx)|\le G$, $\partial g(\bx)\neq\emptyset$, and $\partial g(\bx)\subseteq\mathcal{N}_{\mathcal{X}}(\bx)+ \cB_M, \forall \bx\in \mathcal{X}$. 
			\item[D.] $\prox_{g}(\bx)$ can be computed easily, e.g., in a closed form.
		\end{itemize}
	\end{assumption}
	With Assumption~\ref{assume:stochastic}A and \ref{assume:stochastic}B, there must exist constants $\{B_{f_i}\}_{i=0}^m$ and $\{B_{c_j}\}_{j=1}^n$ such that
	\small
	\begin{subequations}\label{eq:bd-cj-fi}
		\begin{align}
			&\max\big\{|f_i(\bx)|,\|\nabla f_i(\bx)\|\big\} \leq B_{f_i},\, \forall \bx\in\X, \, \forall \, i=0,1,\ldots,m,\label{eq:bd-fi}\\[0.1cm]
			&\max\big\{|c_j(\bx)|,\|\nabla c_j(\bx)\|\big\} \leq B_{c_j},\, \forall \bx\in\X , \, \forall \, j=1,\ldots,n.\label{eq:bd-ci}
		\end{align}
	\end{subequations}
	\normalsize
	Assumption~\ref{assume:stochastic}C holds, for exmaple, if $g(\bx)=r(\bx)+\mathbf{1}_{\X}(\bx)$, where $\mathbf{1}_{\X}$ is the indicator function on $\X$, and $r$ is a real-valued function with the norm of every subgradient bounded by $M$. In addition to Assumption~\ref{assume:stochastic}, we will make more assumptions on the (weak) convexity of the constraint functions. Details will be given in Sections~\ref{sec:penaltymethod} and \ref{sec:penaltymethodnonconvex}, where we conduct the complexity analysis.

	\subsection{Adaptive Accelerated Proximal Gradient Method}
	In each main iteration of the algorithm we propose for \eqref{eq:gco}, a \emph{convex composite optimization} subproblem of the form
	\small
	\begin{eqnarray}
		\label{eq:compositeopt}
		\min_{\bx\in\mathbb{R}^d}\big\{F(\bx):=\phi(\bx)+r(\bx)\big\}
	\end{eqnarray}
	\normalsize
	will be approximately solved, where $\phi:\mathbb{R}^d\rightarrow\mathbb{R}$ is $\mu_\phi$-strongly convex and $L_\phi$-smooth, and $r:\mathbb{R}^d\rightarrow\mathbb{R}\cup\{+\infty\}$ is a proper lower-semicontinuous convex function. We use a first-order method to solve the subproblems. 
	
	The standard \emph{accelerated proximal gradient} (APG) method (e.g. \cite{nesterov-2004-introductory}) requires knowing the exact values of $\mu_\phi$ and $L_\phi$ which may be unknown. To address this issue, \emph{adaptive accelerated proximal gradient} (AdapAPG) methods \cite{Nesterov2013,lin2015adaptive} have been developed 
	to dynamically estimate  $\mu_\phi$ and $L_\phi$ during the algorithm at the cost of a little higher complexity than APG. We apply the AdapAPG method in \cite{lin2015adaptive} to solve our subproblems. 
	
	Given $\bw\in\mathbb{R}^d$ and a constant $L>0$, we define a local model of $\phi(\bx)$ as 
	\small
	\begin{eqnarray}
		\label{eq:quadmodel}
		\psi_{L}(\bw;\bx):=\phi(\bw)+\nabla \phi(\bw)^\top(\bx-\bw)+\frac{L}{2}\|\bx-\bw\|^2+r(\bx).
	\end{eqnarray}
	\normalsize
	Then the \emph{proximal gradient step} of \eqref{eq:compositeopt} at $\bw$ is defined as
	\small
	\begin{eqnarray}
		\label{eq:quadmodel_sol}
		T_{L}(\bw):=\argmin_{\bx\in\mathbb{R}^d}\psi_{L}(\bw;\bx)=\prox_{L^{-1}r}(\bw-L^{-1}\nabla \phi(\bw)),
	\end{eqnarray}
	\normalsize
	and the \emph{proximal gradient mapping} of \eqref{eq:compositeopt}  at $\bw$ is
	\small
	\begin{eqnarray}
		\label{eq:proximalgrad}
		g_L(\bw):=L(\bw-T_{L}(\bw)).
	\end{eqnarray}
	\normalsize
	By the optimality of $T_{L}(\bw)$ and  $L_\phi$-smoothness of $\phi$, we can easily show that there exists $\bxi\in\partial r(T_{L}(\bw))$ such that 
	\small
	\begin{eqnarray}
		\label{eq:pgboundsubg}
		\omega(T_{L}(\bw))\leq\|\nabla \phi(T_{L}(\bw))+\bxi\|\leq \left(1+\frac{S_L(\bw)}{L}\right)\|g_L(\bw)\|,
	\end{eqnarray}
	\normalsize
	where
	\small
	\begin{eqnarray}
		\label{eq:subg}
		\omega(\bx):=\min_{\bxi'\in\partial r(\bx)}\|\nabla \phi(\bx)+\bxi'\|
	\end{eqnarray}
	\normalsize
	is a first-order optimality measure of $\bx$ and
	\small
	\begin{eqnarray}
		\label{eq:SL}
		\textstyle S_L(\bw):=\frac{\|\nabla \phi(T_{L}(\bw))-\nabla\phi(\bw)\|}{\|T_{L}(\bw)-\bw\|}\leq L_{\phi}
	\end{eqnarray}
	\normalsize
	is a local Lipschitz constant of $\phi$.
	Inequality \eqref{eq:pgboundsubg} means that, when $\|g_L(\bw)\|$ is small, the solution generated by a proximal gradient step from $\bw$ has a small subgradient and thus is a high-quality solution of \eqref{eq:compositeopt}.
	
	\begin{algorithm}[t]
		\caption{$\{\bx^{(t+1)},M_t,\bp^{(t)},S_t\}\gets\texttt{LineSearch}(\phi,r,\bx^{(t)},L_t)$}
		\label{alg:line-search}
		\begin{algorithmic}[1]
			\STATE {\bfseries Choose:}  $\gammaInc>1$
			\STATE $L \gets L_t / \gammaInc$
			\REPEAT
			\STATE $L \gets L \gammaInc$
			\STATE $\bx^{(t+1)} \gets T_L(\bx^{(t)})$
			\UNTIL{$F(\bx^{(t+1)}) \leq \psi_L(\bx^{(t)};\bx^{(t+1)})$}
			\STATE$M_t \gets L$
			\STATE$\bp^{(t)} \gets M_t (\bx^{(t)}-\bx^{(t+1)})$
			\STATE$S_t \gets S_L(\bx^{(t)})$
		\end{algorithmic}
	\end{algorithm}

	\begin{algorithm}[t]
		\caption{\\$\{\bx^{(t+1)}, M_t, \alpha_t, \bp^{(t)}, S_t\}\gets\texttt{AccelLineSearch}
			(\phi,r,\bx^{(t)}, \bx^{(t-1)}, L_t, \mu,\alpha_{t-1})$}
		\label{alg:accel-line-search}
		\begin{algorithmic}[1]
			\STATE {\bfseries Choose:} $\gammaInc>1$
			\STATE $L \gets L_t/ \gammaInc$
			\REPEAT
			\STATE $L \gets L \gammaInc$
			\STATE $\alpha_t \gets \sqrt{\frac{\mu}{L}}$
			\STATE $\bw^{(t)} \gets \bx^{(t)} + \frac{\alpha_t(1-\alpha_{t-1})}{\alpha_{t-1}(1+\alpha_t)}(\bx^{(t)}-\bx^{(t-1)})$ 
			\STATE $\bx^{(t+1)} \gets T_L(\bw^{(t)})$
			\UNTIL{$F(\bx^{(t+1)}) \leq \psi_L(\bw^{(t)};\bx^{(t+1)})$}
			\STATE $M_t \gets L$
			\STATE $\bp^{(t)} \gets M_t (\bw^{(t)}-\bx^{(t+1)})$
			\STATE $S_t \gets S_L(\bw^{(t)})$
		\end{algorithmic}
	\end{algorithm}

	\begin{algorithm}[tb]
		\caption{$\{\hat \bx, \hat M, \hat \mu\} \gets \texttt{AdapAPG} \,
			(\phi, r, \mathbf{\xini}, \Lini, \mu_0, \hat{\varepsilon} )$}
		\label{alg:adap-APG1}
		\begin{algorithmic}[1]
			\STATE {\bfseries Choose:} $\Lmin\geq\mu_0$, ~$\gammaDec\geq 1$, ~$\gammaSC>1$, ~$\thetaSC\in (0,1)$
			\STATE $\{\bx^{(0)},M_{-1},\bp^{(-1)},S_{-1}\}\gets\texttt{LineSearch}(\phi,r,\mathbf{\xini},\Lini)$
			\STATE $\bx^{(-1)}\gets \bx^{(0)}$, $L_{0}\gets M_{-1}$, 
			$\mu\gets\mu_0$, $\alpha_{-1} \gets 1$, $\tau_0 \gets 1$, $t\gets 0$
			\REPEAT
			\STATE$\{\bx^{(t+1)},M_t, \alpha_t, \bp^{(t)}, S_t\}\gets \texttt{AccelLineSearch}(\phi,r, \bx^{(t)}, \bx^{(t-1)}, L_t, \mu, \alpha_{t-1} )$
			\STATE  $\tau_{t+1}\gets\tau_t(1-\alpha_t)$
			\IF
			{$\bigl\|\bp^{(t)}\|_2\leq \thetaSC \bigl\|\bp^{(-1)}\bigr\|_2$}
			\STATE$\bx^{(0)} \gets \bx^{(t+1)}$, $\bx^{(-1)} \gets \bx^{(t+1)}$, $L_{0}\gets M_t$, 
			$\bp^{(-1)} \gets \bp^{(t)}$, $M_{-1} \gets M_t$, $S_{-1} \gets S_t$
			\STATE$t\gets 0$
			\ELSE\IF
			{$2\sqrt{2\tau_t} \, \frac{M_t}{\mu}
				\left(1+\frac{S_{-1}}{M_{-1}}\right) \leq \thetaSC$}
			\STATE $\mu\gets \mu/\gammaSC$
			\STATE $t\gets 0$
			\ELSE
			\STATE$L_{t+1} \gets \max\{\Lmin, M_t/\gammaDec\}$
			\STATE$t \gets t+1$
			\ENDIF
			\ENDIF
			\UNTIL{$\omega(\bx^{(t+1)}) \leq \hat\varepsilon$ 
			}
			\STATE$\hat{\bx} \gets \bx^{(t+1)}$, ~$\hat{M} \gets M_t$, ~$\hat{\mu} \gets \mu$
		\end{algorithmic}
	\end{algorithm}
	
	With these notations, we briefly describe the AdapAPG method in Algorithm~\ref{alg:adap-APG1}, where we treat $\phi$ and $r$ as the input because we will apply it to different instances of \eqref{eq:compositeopt}. We refer the interested readers to \cite{lin2015adaptive} for details. AdapAPG calls two different line-search schemes that are described in Algorithm~\ref{alg:line-search} and Algorithm~\ref{alg:accel-line-search}, respectively. Here, Algorithm~\ref{alg:line-search} is only used for initialization while Algorithm~\ref{alg:accel-line-search} is the main subroutine in each iteration of Algorithm~\ref{alg:adap-APG1}. AdapAPG maintains and updates estimations of $\mu_\phi$ and $L_\phi$. In iteration $t$, the AdapAPG method
	calls Algorithm~\ref{alg:accel-line-search}, which performs the updating steps of the APG method~\cite{nesterov-2004-introductory} by using an estimation of $\mu_\phi$, denoted as $\mu$, in place of $\mu_\phi$ and using a line search scheme to update the estimation of $L_\phi$, denoted as $M_t$.  After the $t$th call of  Algorithm~\ref{alg:accel-line-search}, the AdapAPG method stores $\bp^{(t)}=g_{M_{t}}(\bw^{(t)})$ and $\bx^{(t+1)}=T_{M_{t}}(\bw^{(t)})$. It can be shown that, if $\mu\leq \mu_\phi$, the value of $\|\bp^{(t)}\|$ should decrease geometrically to zero. Therefore, if such a decrease is not observed, it must happen that $\mu>\mu_\phi$. Then the algorithm is restarted with $\mu$ divided by $\gammaSC>1$, which leads to the adaptivity to the unknown $\mu_\phi$. The following theorem shows the complexity of the AdapAPG method for solving \eqref{eq:compositeopt}.
	
	\begin{theorem}[Theorem 2 in \cite{lin2015adaptive}]
		\label{thm:conv-adap-APG1}
		Assume $\mu_0 \geq \mu_\phi > 0$. Let $\bp^{\text{ini}}$ denote the first $\bp^{(-1)}$ computed by
		Algorithm~\ref{alg:adap-APG1}. The total number of proximal gradient steps performed by Algorithm~\ref{alg:adap-APG1} is at most
		\small
		\begin{align} \label{eq:complexity-adapapg}
			&\left( \left\lceil \log_{1/\theta_{sc}} \!\left(\left(1+\frac{L_{\phi}}{\Lmin}\right)
			\frac{\|\bp^{\text{ini}}\|}{\hat\varepsilon}\right)\right\rceil +  \left\lceil \log_{\gammaSC} \! \left( \frac{\mu_0}{\mu_{\phi}}\right) \right\rceil\right)
			\,\\ \nonumber
			&\times 	\sqrt{\frac{L_{\phi}\gammaInc \gamma_{sc}}{\mu_{\phi}}} \ln \left(8\left(\frac{L_{\phi}\gammaInc  \gamma_{sc}}{\mu_{\phi}\theta_{sc}}\right)^2
			\left(1+\frac{L_{\phi}}{\Lmin}\right)^2\right) .  
		\end{align}
		\normalsize
	\end{theorem}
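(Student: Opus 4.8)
Because the statement is quoted as Theorem 2 of \cite{lin2015adaptive}, the intended justification is simply a citation; the self-contained argument I would reconstruct proceeds as follows. The plan is to split the total count of proximal gradient steps into the product of (i) the number of steps spent inside a single \emph{inner phase}---a maximal run of iterations between two consecutive resets of the counter $t$ to $0$---and (ii) the number of such phases. Each inner phase ends either with a \emph{successful restart} (triggered by $\|\bp^{(t)}\|\le\thetaSC\|\bp^{(-1)}\|$) or with a \emph{strong-convexity reduction} $\mu\gets\mu/\gammaSC$. The two logarithmic factors in \eqref{eq:complexity-adapapg} will count these two kinds of phases, while the factor $\sqrt{L_\phi\gammaInc\gammaSC/\mu_\phi}$ together with the trailing logarithm will bound the length of a single phase.

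For the per-phase bound I would first invoke the line-search property: Algorithm~\ref{alg:accel-line-search} terminates with $M_t\le\gammaInc L_\phi$, since $L_\phi$-smoothness forces acceptance once the trial value reaches $L_\phi$ and $L$ grows by the factor $\gammaInc$. Then, \emph{assuming} $\mu\le\mu_\phi$, I would apply the estimate-sequence (Lyapunov) analysis of APG to obtain a geometric decay of the proximal-gradient mapping of the form $\|\bp^{(t)}\|^2\le c\,\tau_t\,\|\bp^{(-1)}\|^2$, where $\tau_t=\prod_{s<t}(1-\alpha_s)$ with $\alpha_s=\sqrt{\mu/M_s}\ge\sqrt{\mu/(\gammaInc L_\phi)}$. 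Hence $\tau_t\le(1-\sqrt{\mu/(\gammaInc L_\phi)})^t$, so the restart test is met after $O\!\big(\sqrt{\gammaInc L_\phi/\mu}\,\ln(\cdots)\big)$ steps, the logarithm absorbing $c$ and the condition-number-type constants. Using $\mu\ge\mu_\phi/\gammaSC$ (the smallest value $\mu$ can take once it has just dropped below $\mu_\phi$) turns $\sqrt{L_\phi/\mu}$ into $\sqrt{L_\phi\gammaSC/\mu_\phi}$, which is the factor appearing in the bound.

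To count the phases I would argue as follows. Each successful restart multiplies the reference quantity $\|\bp^{(-1)}\|$ by at most $\thetaSC$; combined with \eqref{eq:pgboundsubg}, which bounds the optimality measure by $(1+S_t/M_t)\|\bp^{(t)}\|\le(1+L_\phi/\Lmin)\|\bp^{(t)}\|$, the termination criterion $\omega\le\hat\varepsilon$ is reached after at most $\lceil\log_{1/\thetaSC}((1+L_\phi/\Lmin)\|\bp^{\text{ini}}\|/\hat\varepsilon)\rceil$ successful restarts, giving the first bracketed term. The reductions are bounded because $\mu$ starts at $\mu_0\ge\mu_\phi$ and is only ever divided by $\gammaSC$; once $\mu$ lands in $[\mu_\phi/\gammaSC,\mu_\phi]$ no further reduction is triggered, so at most $\lceil\log_{\gammaSC}(\mu_0/\mu_\phi)\rceil$ reductions occur, giving the second bracketed term. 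Multiplying the per-phase length by the total number of phases yields \eqref{eq:complexity-adapapg}.

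The delicate point, and the step I expect to be the main obstacle, is the \emph{correctness of the adaptive $\mu$-test}, namely that the condition $2\sqrt{2\tau_t}\,\tfrac{M_t}{\mu}(1+S_{-1}/M_{-1})\le\thetaSC$ (a) is satisfied within $O(\sqrt{L_\phi/\mu}\,\ln(\cdots))$ iterations whenever $\mu>\mu_\phi$, so that an overestimated $\mu$ is detected and corrected without wasting more than one phase's worth of steps, and (b) is never the mechanism that ends a phase when $\mu\le\mu_\phi$---otherwise a spurious reduction would eventually drive $\mu$ far below $\mu_\phi$ and destroy the $\sqrt{L_\phi/\mu_\phi}$ guarantee. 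Proving this requires relating the APG decay certificate $\tau_t$ to the genuine decrease of $\|\bp^{(t)}\|$ and showing that the test faithfully separates the regimes $\mu\le\mu_\phi$ and $\mu>\mu_\phi$; this is precisely the technical heart of \cite{lin2015adaptive}, and I would import their estimate-sequence lemmas rather than re-derive them.
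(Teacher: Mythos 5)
The paper offers no proof of this theorem --- it is imported verbatim as Theorem~2 of \cite{lin2015adaptive} --- so the citation you give is exactly the paper's own justification. Your reconstruction (phases ended either by a restart or by a reduction of $\mu$, a per-phase length of $O\big(\sqrt{L_\phi\gammaInc\gammaSC/\mu_\phi}\,\ln(\cdot)\big)$, and the two logarithmic factors counting the two kinds of phases) faithfully mirrors the structure of the proof in that reference, with the genuinely delicate step --- the soundness of the adaptive $\mu$-test --- correctly identified and appropriately deferred to the cited estimate-sequence lemmas.
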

	\begin{remark}
		If $0<\mu_0 < \mu_\phi$, following the same proof as for Theorem 2 in \cite{lin2015adaptive}, we can show that 
		the total number of proximal gradient steps performed by Algorithm~\ref{alg:adap-APG1} is at most
		\small
		\begin{align*}
			\left\lceil \log_{1/\theta_{sc}} \!\left(\left(1+\frac{L_{\phi}}{\Lmin}\right)
			\frac{\|\bp^{\text{ini}}\|}{\hat\varepsilon}\right)\right\rceil 
			\sqrt{\frac{L_{\phi}\gammaInc \gamma_{sc}}{\mu_0}}
			\,\ln \left(8\left(\frac{L_{\phi}\gammaInc  \gamma_{sc}}{\mu_0\theta_{sc}}\right)^2
			\left(1+\frac{L_{\phi}}{\Lmin}\right)^2\right),
		\end{align*}
		\normalsize
		which is obtained by replacing $\mu_{\phi}$ by $\mu_0$ in \eqref{eq:complexity-adapapg}. 
	\end{remark}

	In this paper, we measure the \emph{complexity} of an algorithm using the total number of proximal gradient steps it performs. According to Theorem~\ref{thm:conv-adap-APG1}, the total complexity of the AdapAPG method to find a solution $\bar\bx$ to  \eqref{eq:compositeopt} satisfying $\omega(\bar\bx)\leq \hat\varepsilon$
	is 
	$
	O\left(\kappa_\phi^{1/2}\log(\kappa_\phi)
	\log\left(\frac{1}{\hat\varepsilon}\right)\right),
	$
	where $\kappa_\phi=\frac{L_\phi}{\mu_{\phi}}$ is the condition number of \eqref{eq:compositeopt}. Compared to APG whose complexity is $O\left(\kappa_\phi^{1/2}\log\left(\frac{1}{\hat\varepsilon}\right)\right)$, AdapAPG has an additional factor of $\log(\kappa_\phi)$ in the complexity result, but the latter does not require knowing the exact values of $\mu_\phi$ and $L_\phi$.

	\section{Inexact proximal-point penalty methods }
	\label{sec:algorithm}
	In this section, we describe the inexact proximal-point penalty (iPPP) method for \eqref{eq:gco} in details. This method incorporates the ideas of the proximal point method and the quadratic penalty method by iteratively updating the estimated solution $\bar\bx^{(k)}$ as follows
	\small
	\begin{equation}
		\bar\bx^{(k+1)}\approx\widetilde\bx^{(k)}\label{eq:iALMsub}
		:=\argmin_{\bx\in\mathbb{R}^d} f_0(\bx)+g(\bx)+\frac{\gamma_k}{2}\|\bx-\bar\bx^{(k)}\|^2+\frac{\beta_k}{2}\left(\|\vc(\bx)\|^2+\big\|[\vf(\bx)]_+\big\|^2\right),
	\end{equation}
	\normalsize
	where $\beta_k>0$ is the penalty parameter, and $\gamma_k>0$ is the proximal parameter. 
	When all $f_i$'s and $c_j$'s are weakly convex,  a sufficiently large $\gamma_k$ can be chosen such that the minimization problem in \eqref{eq:iALMsub} becomes strongly convex, and Algorithm \ref{alg:adap-APG1} can be applied to approximately solve it to obtain $\bar\bx^{(k+1)}$.  We formally describe our method in Algorithm~\ref{alg:iPPP}, where $\phi_k$ in \eqref{eq:compositepart} is the smooth part of the objective function in \eqref{eq:iALMsub}. 
	\begin{algorithm}
		\caption{Inexact Proximal-Point Penalty (iPPP) Method for \eqref{eq:gco}} 
		\label{alg:iPPP}
		\begin{algorithmic}[1]
			\normalsize
			\STATE {\bfseries Input:} Initial solution $\bar \bx^{(0)}$, initial estimation of Lipschitz constant $M^{\text{ini}}$,  initial estimation of strong convexity constant $\mu^{\text{ini}}$, proximal parameters $\{\gamma_k\}_{k\geq0}$, penalty parameters $\{\beta_k\}_{k\geq0}$, and the targeted optimality measure for subproblems $\{\hat\varepsilon_k\}_{k\geq0}$.
			\STATE $\hat M\leftarrow M^{\text{ini}}$ and $\hat \mu\leftarrow \mu^{\text{ini}}$
			\FOR{$k=0,1,\dots,$}
			\STATE Let
			\small
			\begin{eqnarray}
				\label{eq:compositepart}
				\phi_k(\bx):=f_0(\bx)+\frac{\gamma_k}{2}\|\bx-\bar\bx^{(k)}\|^2+\frac{\beta_k}{2}\left(\|\vc(\bx)\|^2+\big\|[\vf(\bx)]_+\big\|^2\right),
			\end{eqnarray}
			\normalsize
			\STATE Call Alg.~\ref{alg:adap-APG1}: $\{\bar\bx^{(k+1)}, \hat M, \hat \mu\} \gets \texttt{AdapAPG} \,
			(\phi_k, g, \bar\bx^{(k)}, \hat M, \hat \mu, \hat{\varepsilon}_k )$
			\STATE Set $\bar\by^{(k+1)}\gets\beta_k \vc(\bar\bx^{(k+1)})$ and $\blambda^{(k+1)}\gets\beta_k[\vf(\bar\bx^{(k+1)})]_+$
			\STATE Compute $\textbf{S}_{k+1}$, $\textbf{F}_{k+1}$, and $\textbf{C}_{k+1}$ by
			\begin{subequations}\label{eq:def-sfc}
				\small
				\begin{align}
					\label{eq:def-S}
					&\textbf{S}_{k+1}\gets\mathrm{dist}\big(\nabla f_0(\bar\bx^{(k+1)})+J_{\vf}(\bar\bx^{(k+1)})^\top\bar\blambda^{(k+1)}+J_{\vc}(\bar\bx^{(k+1)})^\top\bar \by^{(k+1)}, ~ -\partial g(\bar\bx^{(k+1)})\big)\\
					&\textbf{F}_{k+1}\gets\sqrt{\|\vc(\bar\bx^{(k+1)})\|^2+\big\|[\vf(\bar\bx^{(k+1)})]_+\big\|^2}\label{eq:def-F}\\
					&\textbf{C}_{k+1}\gets\sum_{i=1}^m|\bar\lambda^{(k+1)}_i f_i(\bar\bx^{(k+1)})|. \label{eq:def-C}
				\end{align}
				\normalsize
			\end{subequations}
			\STATE Generate index $R_{k+1}$ by one of the two options:
			\STATE ~~\textbf{Option I:} $R_{k+1}= \underset{1\le l \leq k+1}\argmin\max\left\{	\textbf{S}_l,\textbf{F}_l,\textbf{C}_l\right\}$
			\STATE ~~\textbf{Option II:} $R_{k+1}= \underset{1\le l \leq k+1}\argmin\max\left\{	\textbf{S}_l,\textbf{F}_l\right\}$
			\IF{ a stopping condition is satisfied} \STATE {Return $\bar\bx^{(R_{k+1})}$ and stop}  \ENDIF
			\ENDFOR
		\end{algorithmic}
	\end{algorithm}
	
	In the rest of the paper, we analyze the theoretical properties of Algorithm \ref{alg:iPPP}. For technical reasons, we consider two different cases. In the first case, we assume that the objective function is weakly convex while the constraint functions are  convex.  In the second case, we assume that the objective function and the constraint functions are all weakly convex. The parameters $\{\gamma_k\}$ and $\{\beta_k\}$ will be chosen differently for the two cases. We will show that the output of Algorithm~\ref{alg:iPPP} with appropriate settings is an $\varepsilon$-stationary point or a weak $\varepsilon$-stationary point of \eqref{eq:gco} in each case. We will also analyze the computational complexity of Algorithm \ref{alg:iPPP}, measured by the total number of proximal gradient steps it performs.
	
	\begin{remark}
		Computing $\textbf{S}_{k+1}$ in \eqref{eq:def-S} requires projection onto $-\partial g(\bar\bx^{(k+1)})$, which is also needed when evaluating the stopping condition $\omega(\bx^{(t+1)})$ in Algorithm~\ref{alg:adap-APG1}. We have assumed $g$ is simple enough to allow a closed-form solution for the proximal gradient step. For such $g$, complexity of this projection is usually no more than a proximal gradient step, e.g., when $g(\bx)=\mathbf{1}_{\X}(\bx)$ where $\X$ is a Euclidean ball or a box. 
	\end{remark}
	
	\begin{lemma}
		Suppose $\phi_k$ in \eqref{eq:compositepart} is convex. Let $\{\bar\bx^{(k)}\}$ be generated from Algorithm~\ref{alg:iPPP}. Then 
		for any $\bx\in\mathcal{X}$, it holds that
		\small
		\begin{eqnarray}
			\label{eq:nesterovconv1}
			\phi_k(\bar\bx^{(k+1)})+g(\bar\bx^{(k+1)})-\phi_k(\bx)-g(\bx)\leq \hat{\varepsilon}_kD,~\forall k\geq 0, \text{ and }
		\end{eqnarray}
		\normalsize
		{\small
			\begin{align}\label{eq:nesterovconv2-vary-beta-nc-constant}
				&~\sum_{k=0}^{K-1}\frac{\gamma_k}{2}\|\bar \bx^{(k+1)}-\bar\bx^{(k)}\|^2+\frac{\beta_{K-1}}{2}\left(\|\vc(\bar \bx^{(K)})\|^2+\big\|[\vf(\bar \bx^{(K)})]_+\big\|^2\right)\\\nonumber
				\le &~2B_{f_0}+2G+\frac{\beta_0}{2}\biggl(\|\vc(\bx^{(0)})\|^2+\big\|[\vf(\bar \bx^{(0)})]_+\big\|^2\biggr)\\\nonumber &
				+\frac{1}{2}\sum_{k=1}^{K-1}(\beta_k-\beta_{k-1})\biggl(\|\vc(\bx^{(k)})\|^2 +\big\|[\vf(\bar \bx^{(k)})]_+\big\|^2\biggr)+\left(\sum_{k=0}^{K-1}\hat{\varepsilon}_k\right)D,~\forall K\geq 1.
			\end{align}
		}
	\end{lemma}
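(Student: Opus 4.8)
The plan is to invoke the stopping rule of the AdapAPG subroutine together with convexity of the subproblem objective. By the termination criterion in Algorithm~\ref{alg:adap-APG1}, the iterate $\bar\bx^{(k+1)}$ satisfies $\omega(\bar\bx^{(k+1)})\le\hat\varepsilon_k$, where $\omega$ is the optimality measure in \eqref{eq:subg}, so there exists $\bxi\in\partial g(\bar\bx^{(k+1)})$ with $\|\nabla\phi_k(\bar\bx^{(k+1)})+\bxi\|\le\hat\varepsilon_k$. Since $\phi_k$ is convex by hypothesis and $g$ is convex, the sum $\phi_k+g$ is convex, and the subgradient inequality at $\bar\bx^{(k+1)}$ gives, for any $\bx\in\X$,
\begin{align*}
\phi_k(\bx)+g(\bx)\ge\phi_k(\bar\bx^{(k+1)})+g(\bar\bx^{(k+1)})+\big\langle\nabla\phi_k(\bar\bx^{(k+1)})+\bxi,\,\bx-\bar\bx^{(k+1)}\big\rangle.
\end{align*}
Rearranging, applying Cauchy--Schwarz, and bounding $\|\bx-\bar\bx^{(k+1)}\|\le D$ by the diameter (both $\bx$ and $\bar\bx^{(k+1)}$ lie in $\X$, the latter because its objective value is finite) from Assumption~\ref{assume:stochastic}B yields \eqref{eq:nesterovconv1}.

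\textbf{Part (2).} The plan is to specialize \eqref{eq:nesterovconv1} to $\bx=\bar\bx^{(k)}$ and then sum over $k$. Abbreviating $P_k:=\|\vc(\bar\bx^{(k)})\|^2+\big\|[\vf(\bar\bx^{(k)})]_+\big\|^2$ and noting that the proximal term in $\phi_k$ vanishes at $\bar\bx^{(k)}$, the specialized inequality reads
\begin{align*}
\frac{\gamma_k}{2}\|\bar\bx^{(k+1)}-\bar\bx^{(k)}\|^2+\frac{\beta_k}{2}P_{k+1}\le f_0(\bar\bx^{(k)})-f_0(\bar\bx^{(k+1)})+g(\bar\bx^{(k)})-g(\bar\bx^{(k+1)})+\frac{\beta_k}{2}P_k+\hat\varepsilon_k D.
\end{align*}
Summing from $k=0$ to $K-1$, the $f_0$ and $g$ differences telescope to $f_0(\bar\bx^{(0)})-f_0(\bar\bx^{(K)})$ and $g(\bar\bx^{(0)})-g(\bar\bx^{(K)})$, which I would bound by $2B_{f_0}$ and $2G$ using \eqref{eq:bd-fi} and Assumption~\ref{assume:stochastic}C respectively.

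The one step requiring care, and the main (though modest) obstacle, is the penalty sum $\sum_{k=0}^{K-1}\frac{\beta_k}{2}(P_{k+1}-P_k)$: because the coefficient $\beta_k$ varies with $k$, this is not a clean telescope. I would resolve it by summation by parts, re-indexing $\sum_{k=0}^{K-1}\frac{\beta_k}{2}P_{k+1}=\sum_{j=1}^{K}\frac{\beta_{j-1}}{2}P_j$ and comparing with $\sum_{k=0}^{K-1}\frac{\beta_k}{2}P_k$. This isolates the boundary term $\frac{\beta_{K-1}}{2}P_K$, the initial term $-\frac{\beta_0}{2}P_0$, and the interior differences $-\sum_{k=1}^{K-1}\frac{\beta_k-\beta_{k-1}}{2}P_k$. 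Moving $\frac{\beta_{K-1}}{2}P_K$ to the left-hand side and collecting the remaining terms on the right then produces exactly \eqref{eq:nesterovconv2-vary-beta-nc-constant}. Everything else is a routine telescope combined with the uniform bounds supplied by Assumption~\ref{assume:stochastic}.
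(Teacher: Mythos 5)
Your proposal is correct and follows essentially the same route as the paper: the stopping rule of AdapAPG plus convexity of $\phi_k+g$ and the diameter bound give \eqref{eq:nesterovconv1}, and then setting $\bx=\bar\bx^{(k)}$ and summing gives \eqref{eq:nesterovconv2-vary-beta-nc-constant}. The paper compresses the second part into a single sentence, whereas you correctly spell out the summation-by-parts rearrangement of the $\frac{\beta_k}{2}P_k$ terms and the telescoping of $f_0$ and $g$ bounded by $2B_{f_0}+2G$; this is exactly the calculation the paper leaves implicit.
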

	\begin{proof}
		According to the termination condition of Algorithm~\ref{alg:adap-APG1} and the definition of $\omega(\cdot)$ in \eqref{eq:subg}, there exists $\bar\bxi^{(k+1)}\in\partial g(\bar\bx^{(k+1)})$ such that $\|\nabla \phi_k(\bar\bx^{(k+1)})+\bar\bxi^{(k+1)}\|\leq \hat{\varepsilon}_k$. Since $\phi_k$ is convex, so is $\phi_k+g$. Hence, we obtain \eqref{eq:nesterovconv1} by noting
		\small
		\begin{eqnarray*}
			\nonumber
			&&\phi_k(\bar\bx^{(k+1)})+g(\bar\bx^{(k+1)})-\phi_k(\bx)-g(\bx)\\
			&\leq& \left(\nabla \phi_k(\bar\bx^{(k+1)})+\bar\bxi^{(k+1)}\right)^\top(\bar\bx^{(k+1)}-\bx)
			\leq\hat{\varepsilon}_k\|\bar\bx^{(k+1)}-\bx\|\leq \hat{\varepsilon}_kD,
		\end{eqnarray*}
		\normalsize
		which gives \eqref{eq:nesterovconv1}. Now let $\bx=\bar\bx^{(k)}$ in \eqref{eq:nesterovconv1} and sum it over $k=0$ through $K-1$ to obtain \eqref{eq:nesterovconv2-vary-beta-nc-constant} by Assumption~\ref{assume:stochastic} and the equation \ref{eq:bd-fi}.	
	\end{proof}
	
	By the definitions of $\textbf{S}_{k+1}$, $\textbf{F}_{k+1}$, and $\textbf{C}_{k+1}$ in \eqref{eq:def-sfc}, we have for any $K\ge1$ that if $\{R_k\}$ is chosen by Option I in Algorithm~\ref{alg:iPPP}, then
	{\small
		\begin{align}
			\nonumber
			\textstyle	\max\left\{\textbf{S}_{R_K},\textbf{F}_{R_K},\textbf{C}_{R_K}\right\}
			\leq& \textstyle \frac{1}{K}\sum_{k=0}^{K-1}\max\left\{	\textbf{S}_{k+1},\textbf{F}_{k+1},\textbf{C}_{k+1}\right\} \\
			\leq& \textstyle \frac{1}{K}\sum_{k=0}^{K-1}\textbf{S}_{k+1}
			+\frac{1}{K}\sum_{k=0}^{K-1}\textbf{F}_{k+1}+\frac{1}{K}\sum_{k=0}^{K-1}\textbf{C}_{k+1},\label{eq:threeterms}
		\end{align}	
	}
	and if $\{R_k\}$ is chosen by Option II in Algorithm~\ref{alg:iPPP}, then	
	{\small
		\begin{equation}
			\textstyle		\max\left\{\textbf{S}_{R_K},\textbf{F}_{R_K}\right\}\leq\frac{1}{K}\sum_{k=0}^{K-1}\max\left\{	\textbf{S}_{k+1},\textbf{F}_{k+1}\right\}
			\leq\frac{1}{K}\sum_{k=0}^{K-1}\textbf{S}_{k+1}
			+\frac{1}{K}\sum_{k=0}^{K-1}\textbf{F}_{k+1}.\label{eq:twoterms}
		\end{equation}
	}
	
	\begin{lemma}
		Let $\{\textbf{S}_{k+1}\}$ be defined in \eqref{eq:def-S}. Then for any $K\ge1$,
		\begin{align}\label{eq:xtilde-x-dual-feas-option3}
			\textstyle	\frac{1}{K}\sum_{k=0}^{K-1}\textbf{S}_{k+1}
			\leq\frac{1}{K}\sum_{k=0}^{K-1}\hat\varepsilon_k+\frac{1}{K}\sum_{k=0}^{K-1}\gamma_k\| \bar\bx^{(k+1)}-\bar\bx^{(k)}\|.
		\end{align}
	\end{lemma}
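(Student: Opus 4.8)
The plan is to establish the pointwise bound $\textbf{S}_{k+1}\le\hat\varepsilon_k+\gamma_k\|\bar\bx^{(k+1)}-\bar\bx^{(k)}\|$ for each $k$ and then average over $k=0,\dots,K-1$. The starting point is the termination condition of Algorithm~\ref{alg:adap-APG1}, which, exactly as recalled in the proof of the preceding lemma, guarantees a subgradient $\bar\bxi^{(k+1)}\in\partial g(\bar\bx^{(k+1)})$ with $\|\nabla\phi_k(\bar\bx^{(k+1)})+\bar\bxi^{(k+1)}\|\le\hat\varepsilon_k$. The task is then to relate the vector appearing inside the distance in \eqref{eq:def-S} to $\nabla\phi_k(\bar\bx^{(k+1)})$.

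First I would differentiate $\phi_k$ in \eqref{eq:compositepart} explicitly. The penalty terms contribute $\nabla\tfrac12\|\vc(\bx)\|^2=J_{\vc}(\bx)^\top\vc(\bx)$ and $\nabla\tfrac12\|[\vf(\bx)]_+\|^2=J_{\vf}(\bx)^\top[\vf(\bx)]_+$, where the second identity holds because each scalar map $t\mapsto([t]_+)^2$ is continuously differentiable with derivative $2[t]_+$, so the component gradient $[f_i(\bx)]_+\nabla f_i(\bx)$ is valid both where $f_i>0$ and where $f_i\le0$. Evaluating at $\bar\bx^{(k+1)}$ and substituting $\bar\by^{(k+1)}=\beta_k\vc(\bar\bx^{(k+1)})$ and $\bar\blambda^{(k+1)}=\beta_k[\vf(\bar\bx^{(k+1)})]_+$ gives
\[
\nabla\phi_k(\bar\bx^{(k+1)})=\nabla f_0(\bar\bx^{(k+1)})+\gamma_k(\bar\bx^{(k+1)}-\bar\bx^{(k)})+J_{\vc}(\bar\bx^{(k+1)})^\top\bar\by^{(k+1)}+J_{\vf}(\bar\bx^{(k+1)})^\top\bar\blambda^{(k+1)}.
\]
Consequently, the argument whose distance to $-\partial g(\bar\bx^{(k+1)})$ defines $\textbf{S}_{k+1}$ equals $\nabla\phi_k(\bar\bx^{(k+1)})-\gamma_k(\bar\bx^{(k+1)}-\bar\bx^{(k)})$; the proximal term is the sole discrepancy between the two.

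With this identity, the bound is immediate. Since $-\bar\bxi^{(k+1)}\in-\partial g(\bar\bx^{(k+1)})$, the distance is no larger than the distance to this particular element, so by the triangle inequality and the termination condition
\[
\textbf{S}_{k+1}\le\big\|\nabla\phi_k(\bar\bx^{(k+1)})+\bar\bxi^{(k+1)}-\gamma_k(\bar\bx^{(k+1)}-\bar\bx^{(k)})\big\|\le\hat\varepsilon_k+\gamma_k\|\bar\bx^{(k+1)}-\bar\bx^{(k)}\|.
\]
Summing over $k=0,\dots,K-1$ and dividing by $K$ produces \eqref{eq:xtilde-x-dual-feas-option3}. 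I expect the only delicate step to be the gradient computation for the nonsmooth-looking penalty $\|[\vf(\bx)]_+\|^2$ and the verification that $\gamma_k(\bar\bx^{(k+1)}-\bar\bx^{(k)})$ is precisely the gap between $\nabla\phi_k(\bar\bx^{(k+1)})$ and the dual-feasibility vector in \eqref{eq:def-S}; once that algebra is in place, the remainder is a routine triangle-inequality-and-averaging argument.
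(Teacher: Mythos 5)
Your proof is correct and follows essentially the same route as the paper's: both use the AdapAPG stopping condition to obtain $\bar\bxi^{(k+1)}\in\partial g(\bar\bx^{(k+1)})$ with $\|\nabla\phi_k(\bar\bx^{(k+1)})+\bar\bxi^{(k+1)}\|\le\hat\varepsilon_k$, identify the dual-feasibility vector in \eqref{eq:def-S} as $\nabla\phi_k(\bar\bx^{(k+1)})-\gamma_k(\bar\bx^{(k+1)}-\bar\bx^{(k)})$ via the definitions of $\bar\by^{(k+1)}$ and $\bar\blambda^{(k+1)}$, and conclude by the triangle inequality and averaging. Your explicit justification of the gradient of $\|[\vf(\bx)]_+\|^2$ is a welcome detail the paper leaves implicit.
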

	
	\begin{proof}
		First, note 
		\small\begin{align}\label{eq:grad-phik}
			\nabla \phi_k(\bar\bx^{(k+1)}) = & ~\nabla f_0(\bar\bx^{(k+1)})+\gamma_k (\bar\bx^{(k+1)}-\bar\bx^{(k)})+\beta_k J_\vc(\bar\bx^{(k+1)})^\top \vc(\bar\bx^{(k+1)}) \\
			&+\beta_k J_\vf(\bar\bx^{(k+1)})^\top [\vf(\bar\bx^{(k+1)})]_+.\nonumber
		\end{align}
		\normalsize
		Second, according to the stopping condition of Algorithm~\ref{alg:adap-APG1}, there exists $\bar\bxi^{(k+1)}\in\partial g(\bar\bx^{(k+1)})$ such that $\|\nabla \phi_k(\bar\bx^{(k+1)})+\bar\bxi^{(k+1)}\|\le \hat\varepsilon_k$, which, by the definition of $\bar\by^{(k+1)}$ and $\bar\blambda^{(k+1)}$ in Algorithm~\ref{alg:iPPP}, implies 
		\small
		\begin{align*}
			&~\left\|\nabla f_0(\bar\bx^{(k+1)})+\bar\bxi^{(k+1)}+J_\vc(\bar\bx^{(k+1)})^\top\bar\by^{(k+1)}+J_\vf(\bar\bx^{(k+1)})^\top \bar\blambda^{(k+1)}\right\| \\
			\leq &~\hat\varepsilon_k+\gamma_k\| \bar\bx^{(k+1)}-\bar\bx^{(k)}\|.
		\end{align*}
		\normalsize
		Hence, by the definition of $\textbf{S}_{k+1}$, we have $\textbf{S}_{k+1} \le \hat\varepsilon_k+\gamma_k\| \bar\bx^{(k+1)}-\bar\bx^{(k)}\|$,	
		which, after taking average over $k=0,\dots,K-1$, implies the desired result.
	\end{proof}

	\section{Complexity with convex constraints}
	\label{sec:penaltymethod}
	Throughout this section, we assume that $f_i$ is convex for each $i=1,\ldots,m$ and $c_j$ is affine for each $j=1,\ldots, n$, namely, we consider the following problem with only convex constraints: 
	\begin{equation}
		\label{eq:gco-cvx}
		\min_{\bx\in\mathbb{R}^d}f_0(\bx)+g(\bx),\quad\mathrm{s.t.}\quad \bA\bx=\bb,
		\quad  \vf(\bx)\leq \mathbf{0}, 
	\end{equation}
	where $\bA\in\RR^{n\times d}$ and $\bb\in\RR^n$ are given. Without loss of generality, we assume $\bA$ to be full row-rank. Otherwise, redundant affine equality constraints can be removed. In addition to 
	Assumption~\ref{assume:stochastic}, we make the following assumption.
	\begin{assumption}
		\label{assume:convexconstraint}
		The following statements hold:	
		\begin{itemize}
			\item[A.]	$f_0$ is $\rho_0$-weakly convex for $\rho_0\ge 0$ and $f_i$ is convex for $i=1,\dots,m$.
			\item[B.] There exists $\bx_{\mathrm{feas}}\in\mathrm{int}(\X)$ satisfying $\bA\bx_{\mathrm{feas}}=\bb$ and $\vf(\bx_{\mathrm{feas}})<\mathbf{0}$. 
		\end{itemize}
	\end{assumption}
	
	Here, we only require the existence of $\bx_{\mathrm{feas}}$ but not its availability to our algorithm. In addition, the assumption on $\mathrm{int}(\X)\neq \emptyset$ does not lose generality. If $\X$ does not have a full dimension, it can be written as $\X'\cap\{\bx\in\RR^d: \mathbf{C}\bx = \mathbf{d}\}$ for some full-dimensional convex compact set $\X'\subset \RR^d$. Then, we can put $\mathbf{C}\bx = \mathbf{d}$ as a part of the affine constraints and replace $\X$ with $\X'$.  
	
	Under Assumptions~\ref{assume:stochastic} and \ref{assume:convexconstraint}, the function $\phi_k$ in \eqref{eq:compositepart} is $L_{\phi_k}$-smooth with
	\begin{equation}
		\label{eq:Lk}
		\textstyle L_{\phi_k} = L_{f_0}+\gamma_k+\beta_k\left(\|\bA^\top\bA\|+\sum_{i=1}^m B_{f_i}(B_{f_i}+L_{f_i})\right)
	\end{equation}
	and is $(\gamma_k-\rho_0)$-strongly convex if $\gamma_k > \rho_0$. To facilitate our analysis in this section, we define
	\small
	\begin{equation}\label{eq:phit}
		\widehat\bx^{(k)}\equiv\argmin_{\bx\in\mathbb{R}^d}\left\{f_0(\bx)+g(\bx)+\frac{\gamma_k}{2}\|\bx-\bar\bx^{(k)}\|^2, \quad\mathrm{s.t.}\quad \bA\bx=\bb,~\quad  \vf(\bx)\leq\mathbf{0}\right\}.
	\end{equation}
	\normalsize
	
	\subsection{Technical lemmas}\label{sec:bd-dual}
	From Assumption~\ref{assume:convexconstraint}B, we have Slater's condition, and thus $\widehat\bx^{(k)}$ must be a KKT point of \eqref{eq:phit}, i.e., there  are $\widehat\bzt^{(k)}\in \partial g(\widehat\bx^{(k)})$,  Lagrangian multipliers $\widehat\by^{(k)}\in\RR^n$, and $\widehat\blambda^{(k)}\in\RR^m$  associated to $\widehat\bx^{(k)}$ such that (c.f. \cite[Theorem 28.2]{rockafellar1970convex}): 
	\small
	\begin{subequations}
		\label{eq:KKTproxk}
		\begin{align}
			\label{eq:KKTproxk1}
			\nabla f_0(\widehat\bx^{(k)})+\widehat\bzt^{(k)}+\gamma_k(\widehat\bx^{(k)}-\bar\bx^{(k)})+\bA^\top\widehat\by^{(k)}+J_\vf(\widehat\bx^{(k)})^\top\widehat\blambda^{(k)}=\mathbf{0},\\\label{eq:KKTproxk2}
			\widehat\blambda^{(k)}\geq \mathbf{0}, \quad\bA\widehat\bx^{(k)}=\bb,
			\quad \vf(\widehat\bx^{(k)})\leq \mathbf{0},\\
			\widehat\lambda_i^{(k)}f_i(\widehat\bx^{(k)})=0,\, i=1,\dots,m. \label{eq:KKTproxk3}
		\end{align}
	\end{subequations}
	\normalsize
	We next prove the  boundedness of $(\widehat\by^{(k)}, \widehat\blambda^{(k)})$. 
	
	\begin{lemma}
		\label{thm:boundlambda}
		Suppose Assumptions~\ref{assume:stochastic} and \ref{assume:convexconstraint} hold.
		Let $(\widehat\bx^{(k)},\widehat\by^{(k)}, \widehat\blambda^{(k)})$ be the solution satisfying the conditions in \eqref{eq:KKTproxk} for $k\ge 0$. Then 
		{\small
			\begin{align}\label{eq:def-Mlam}
				\|\widehat\blambda^{(k)}\|\leq& M_\lambda(\gamma_k):=\frac{Q_k}{\min_{i}|f_i(\bx_{\mathrm{feas}})| }\\
				\label{eq:def-My}
				\|\widehat\by^{(k)}\|\leq& M_y(\gamma_k):=\frac{Q_k\sqrt{\lambda_{\max}(\bA\bA^\top)}}{\lambda_{\min}(\bA\bA^\top)}\left(\frac{1}{D}+\frac{1}{\mathrm{dist}(\bx_{\mathrm{feas}},\partial\mathcal{X})}+\frac{\max_{i}B_{f_i}}{\min_{i}|f_i(\bx_{\mathrm{feas}})| }\right),
			\end{align}
		}
		where $Q_k=D(B_{f_0}+\gamma_k D+M)$.	
	\end{lemma}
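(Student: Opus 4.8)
The plan is to establish the two bounds separately, in each case testing the stationarity relation \eqref{eq:KKTproxk1} against the Slater point $\bx_{\mathrm{feas}}$ guaranteed by Assumption~\ref{assume:convexconstraint}B. Throughout I would use Assumption~\ref{assume:stochastic}C to split the subgradient as $\widehat\bzt^{(k)}=\mathbf{n}^{(k)}+\mathbf{m}^{(k)}$, where $\mathbf{n}^{(k)}\in\mathcal{N}_{\mathcal{X}}(\widehat\bx^{(k)})$ and $\|\mathbf{m}^{(k)}\|\le M$, together with the bounds $\|\nabla f_0(\widehat\bx^{(k)})\|\le B_{f_0}$ and $\|\widehat\bx^{(k)}-\bar\bx^{(k)}\|\le D$. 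Writing $\bu^{(k)}:=\nabla f_0(\widehat\bx^{(k)})+\mathbf{m}^{(k)}+\gamma_k(\widehat\bx^{(k)}-\bar\bx^{(k)})$ collects the ``easy'' terms into one vector with $\|\bu^{(k)}\|\,D\le Q_k$.

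For the multiplier bound \eqref{eq:def-Mlam}, I would take the inner product of \eqref{eq:KKTproxk1} with $\mathbf{d}:=\bx_{\mathrm{feas}}-\widehat\bx^{(k)}$. The equality term vanishes because $\bA\bx_{\mathrm{feas}}=\bb=\bA\widehat\bx^{(k)}$ forces $\langle\bA^\top\widehat\by^{(k)},\mathbf{d}\rangle=0$, and the normal-cone term obeys $\langle\mathbf{n}^{(k)},\mathbf{d}\rangle\le0$ because $\bx_{\mathrm{feas}}\in\mathcal{X}$. What remains is $\sum_i\widehat\lambda_i^{(k)}\langle\nabla f_i(\widehat\bx^{(k)}),\mathbf{d}\rangle$, which equals $-\langle\bu^{(k)},\mathbf{d}\rangle-\langle\mathbf{n}^{(k)},\mathbf{d}\rangle\ge-Q_k$ by stationarity, while by convexity of each $f_i$ and the complementary slackness $\widehat\lambda_i^{(k)}f_i(\widehat\bx^{(k)})=0$ it is at most $\sum_i\widehat\lambda_i^{(k)}f_i(\bx_{\mathrm{feas}})\le-\min_i|f_i(\bx_{\mathrm{feas}})|\,\|\widehat\blambda^{(k)}\|_1$. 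Chaining the two inequalities and using $\|\widehat\blambda^{(k)}\|\le\|\widehat\blambda^{(k)}\|_1$ gives \eqref{eq:def-Mlam}.

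For \eqref{eq:def-My}, I would rewrite \eqref{eq:KKTproxk1} as $\bA^\top\widehat\by^{(k)}=-\mathbf{h}^{(k)}-\mathbf{n}^{(k)}$ with $\mathbf{h}^{(k)}:=\bu^{(k)}+J_\vf(\widehat\bx^{(k)})^\top\widehat\blambda^{(k)}$, whose norm is controlled by \eqref{eq:def-Mlam} and $\|\nabla f_i\|\le B_{f_i}$ as $\|\mathbf{h}^{(k)}\|\le Q_k/D+Q_k\max_iB_{f_i}/\min_i|f_i(\bx_{\mathrm{feas}})|$. Since $\bA$ has full row rank, $\widehat\by^{(k)}=(\bA\bA^\top)^{-1}\bA(\bA^\top\widehat\by^{(k)})$, so for a unit vector $\bw$ I would introduce $\mathbf{q}:=\bA^\top(\bA\bA^\top)^{-1}\bw\in\mathrm{range}(\bA^\top)$, which satisfies $\|\mathbf{q}\|\le\sqrt{\lambda_{\max}(\bA\bA^\top)}/\lambda_{\min}(\bA\bA^\top)$, and expand $\langle\widehat\by^{(k)},\bw\rangle=-\langle\mathbf{h}^{(k)},\mathbf{q}\rangle-\langle\mathbf{n}^{(k)},\mathbf{q}\rangle\le\|\mathbf{h}^{(k)}\|\,\|\mathbf{q}\|-\langle\mathbf{n}^{(k)},\mathbf{q}\rangle$. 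To handle the normal-cone term I would use the interior ball of radius $r:=\mathrm{dist}(\bx_{\mathrm{feas}},\partial\mathcal{X})$: since $\bx_{\mathrm{feas}}-r\mathbf{q}/\|\mathbf{q}\|\in\mathcal{X}$, the definition of $\mathcal{N}_{\mathcal{X}}$ yields $-\langle\mathbf{n}^{(k)},\mathbf{q}\rangle\le(\|\mathbf{q}\|/r)\,\langle\mathbf{n}^{(k)},\widehat\bx^{(k)}-\bx_{\mathrm{feas}}\rangle$.

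The crux is bounding $\langle\mathbf{n}^{(k)},\widehat\bx^{(k)}-\bx_{\mathrm{feas}}\rangle$ sharply enough to recover the stated constant rather than a looser factor of order $1+D/r$. Substituting $\mathbf{n}^{(k)}=-\mathbf{h}^{(k)}-\bA^\top\widehat\by^{(k)}$, the equality term vanishes once more, leaving $\langle\mathbf{h}^{(k)},\bx_{\mathrm{feas}}-\widehat\bx^{(k)}\rangle$; here the part coming from $\bu^{(k)}$ is at most $\|\bu^{(k)}\|D\le Q_k$, and the decisive point is that the part coming from $J_\vf(\widehat\bx^{(k)})^\top\widehat\blambda^{(k)}$ is \emph{non-positive}, since $\sum_i\widehat\lambda_i^{(k)}\langle\nabla f_i(\widehat\bx^{(k)}),\bx_{\mathrm{feas}}-\widehat\bx^{(k)}\rangle\le\sum_i\widehat\lambda_i^{(k)}f_i(\bx_{\mathrm{feas}})\le0$ by the same convexity and complementary-slackness reasoning as above. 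This gives $\langle\mathbf{n}^{(k)},\widehat\bx^{(k)}-\bx_{\mathrm{feas}}\rangle\le Q_k$ and hence $-\langle\mathbf{n}^{(k)},\mathbf{q}\rangle\le\|\mathbf{q}\|Q_k/r$. Assembling the three resulting contributions (the $1/D$, the $\max_iB_{f_i}/\min_i|f_i(\bx_{\mathrm{feas}})|$, and the $1/r$ terms) and taking the supremum over unit $\bw$ then produces exactly $M_y(\gamma_k)$.
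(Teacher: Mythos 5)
Your proposal is correct and follows essentially the same route as the paper: both bounds come from testing the stationarity relation \eqref{eq:KKTproxk1} against the Slater point, using convexity plus complementary slackness for $\widehat\blambda^{(k)}$, and using $\widehat\by^{(k)}=-(\bA\bA^\top)^{-1}\bA(\cdots)$ together with the radius $\mathrm{dist}(\bx_{\mathrm{feas}},\partial\mathcal{X})$ to control the normal-cone component for $\widehat\by^{(k)}$. Your interior-ball, directional treatment of $-\langle\mathbf{n}^{(k)},\mathbf{q}\rangle$ is just an equivalent rephrasing of the paper's supporting-hyperplane bound $\mathrm{dist}(\bx_{\mathrm{feas}},\partial\mathcal{X})\,\|\widehat\bzt_1\|\le(\widehat\bx^{(k)}-\bx_{\mathrm{feas}})^\top\widehat\bzt_1\le Q_k$, and it reproduces the stated constants exactly.
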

	\begin{proof}
		Let $\bx_{\mathrm{feas}}$ be the point in Assumption~\ref{assume:convexconstraint}. Then from the convexity of $\{f_i\}_{i=1}^m$ and the fact $\widehat\blambda^{(k)}\ge\vzero$, it follows that
		\small
		$$\textstyle \sum_{i=1}^m\widehat\lambda_i^{(k)}f_i(\bx_{\mathrm{feas}}) 
		\geq\sum_{i=1}^m\widehat\lambda_i^{(k)}\left[f_i(\widehat\bx^{(k)})+(\bx_{\mathrm{feas}}-\widehat\bx^{(k)})^\top\nabla f_i(\widehat\bx^{(k)})\right].$$
		\normalsize
		The above inequality together with \eqref{eq:KKTproxk1} and \eqref{eq:KKTproxk3} yields
		\small
		\begin{align}
			\nonumber
			\sum_{i=1}^m\widehat\lambda_i^{(k)}f_i(\bx_{\mathrm{feas}}) 
			\geq &~-(\bx_{\mathrm{feas}}-\widehat\bx^{(k)})^\top\left[\nabla f_0(\widehat\bx^{(k)}) +\widehat\bzt^{(k)}  + \bA^\top\widehat\by^{(k)} + \gamma_k(\widehat\bx^{(k)} -\bar\bx^{(k)} )\right]\\\nonumber
			=&~-(\bx_{\mathrm{feas}}-\widehat\bx^{(k)})^\top\left[\nabla f_0(\widehat\bx^{(k)}) +\widehat\bzt^{(k)}  + \gamma_k(\widehat\bx^{(k)} -\bar\bx^{(k)} )\right]\\
			\geq&~-(\bx_{\mathrm{feas}}-\widehat\bx^{(k)})^\top\widehat\bzt^{(k)}- DB_{f_0}-\gamma_k D^2,\label{eq:boundlmabda1}
		\end{align}
		\normalsize
		where the equality follows from $\bA(\bx_{\mathrm{feas}}-\widehat\bx^{(k)})=\bb-\bb=\mathbf{0}$ and the last inequality is by Assumption~\ref{assume:stochastic}B and \eqref{eq:bd-fi}.
		
		By Assumption~\ref{assume:stochastic}C, we have $\widehat\bzt^{(k)}=\widehat\bzt_1+\widehat\bzt_2$ with $\widehat\bzt_1\in\mathcal{N}_{\mathcal{X}}(\widehat\bx^{(k)})$ and $\|\widehat\bzt_2\|\leq M$, and thus
		\small
		\begin{equation}\label{eq:bd-bzt1}
			(\bx_{\mathrm{feas}}-\widehat\bx^{(k)})^\top\widehat\bzt^{(k)} \le (\bx_{\mathrm{feas}}-\widehat\bx^{(k)})^\top\widehat\bzt_1 + DM.
		\end{equation} 
		\normalsize
		Next we bound the term $(\bx_{\mathrm{feas}}-\widehat\bx^{(k)})^\top\widehat\bzt_1$. If $\widehat\bx^{(k)}\in\mathrm{int}(\mathcal{\X})$, then $\widehat\bzt_1=\mathbf{0}$. Hence, we only need to consider the case  
		when $\widehat\bx^{(k)}\in\partial\mathcal{\X}$ and $\widehat\bzt_1\neq\mathbf{0}$. In this case, $\mathcal{H}=\big\{\bx\in\mathbb{R}^d\,|\,(\bx-\widehat\bx^{(k)})^\top \widehat\bzt_1=0\big\}$ is a supporting hyperplane of $\mathcal{X}$ at $\widehat\bx^{(k)}$. Hence, $\mathrm{dist}(\bx_{\mathrm{feas}},\mathcal{H}) \ge \mathrm{dist}(\bx_{\mathrm{feas}},\partial \mathcal{X})>0$, and thus 
		\small
		$$(\widehat\bx^{(k)}-\bx_{\mathrm{feas}})^\top\widehat\bzt_1=\mathrm{dist}(\bx_{\mathrm{feas}},\mathcal{H})\|\widehat\bzt_1\| \ge \mathrm{dist}(\bx_{\mathrm{feas}},\partial \mathcal{X})\|\widehat\bzt_1\|.$$
		\normalsize
		Applying this inequality to \eqref{eq:bd-bzt1} and using \eqref{eq:boundlmabda1}, and also noting $f_i(\bx_{\mathrm{feas}})<0$, we have
		\small
		$$
		\sum_{i=1}^m\widehat\lambda_i^{(k)}|f_i(\bx_{\mathrm{feas}})| + \mathrm{dist}(\bx_{\mathrm{feas}},\partial \mathcal{X})\|\widehat\bzt_1\|  \le DB_{f_0}+\gamma_k D^2+ DM = Q_k.
		$$
		\normalsize
		The above inequality	 implies
		\small
		\begin{eqnarray}
			\label{eq:boundlambdak}
			\|\widehat\blambda^{(k)}\|\leq \|\widehat\blambda^{(k)}\|_1\leq \frac{\sum_{i=1}^m\widehat\lambda_i^{(k)}|f_i(\bx_{\mathrm{feas}})|}{\min_{i}|f_i(\bx_{\mathrm{feas}})| }
			\leq\frac{Q_k}{\min_{i}|f_i(\bx_{\mathrm{feas}})| },
		\end{eqnarray}
		\normalsize
		and
		\small
		\begin{equation}\label{eq:boundlmabda1-2}
			\|\widehat\bzt^{(k)}\| \leq \|\widehat\bzt_1\| + \|\widehat\bzt_2\| \leq \frac{Q_k}{\mathrm{dist}(\bx_{\mathrm{feas}},\partial\mathcal{X})} + M.
		\end{equation}	
		\normalsize
		
		Furthermore, since $\bA$ has a full row-rank, we have from
		\eqref{eq:KKTproxk1} that
		\small
		$$
		\widehat\by^{(k)} = -(\bA\bA^\top)^{-1}\bA\left(\nabla f_0(\widehat\bx^{(k)}) + \gamma_k(\widehat\bx^{(k)} -\bar\bx^{(k)} )+ J_\vf(\widehat\bx^{(k)})^\top\widehat\blambda^{(k)} +\widehat\bzt^{(k)}\right).
		$$
		\normalsize
		Therefore,
		\small
		\begin{align}
			\nonumber
			\|\widehat\by^{(k)}\|
			\leq&~ \frac{\sqrt{\lambda_{\max}(\bA\bA^\top)}}{\lambda_{\min}(\bA\bA^\top)}\left(\left\|\nabla f_0(\widehat\bx^{(k)}) + \gamma_k(\widehat\bx^{(k)} -\bar\bx^{(k)} )\right\|+\|\widehat\bzt^{(k)}\|+\left\|J_\vf(\widehat\bx^{(k)})^\top\widehat\blambda^{(k)}\right\|\right)\\\nonumber
			\leq&~\frac{\sqrt{\lambda_{\max}(\bA\bA^\top)}}{\lambda_{\min}(\bA\bA^\top)}\left(B_{f_0}+\gamma_k D+M+\frac{Q_k}{\mathrm{dist}(\bx_{\mathrm{feas}},\partial\mathcal{X})}+\|\widehat\blambda^{(k)}\|_1\max_{i}\|\nabla f_i(\widehat\bx^{(k)})\|\right)\\\nonumber
			\le&~M_y(\gamma_k),
		\end{align}
		\normalsize
		where the second inequality is from \eqref{eq:boundlmabda1-2}, Assumption~\ref{assume:stochastic}B, and \eqref{eq:bd-fi}, and the third inequality is from  \eqref{eq:boundlambdak}, Assumption~\ref{assume:stochastic}B, and the definition of $M_y(\gamma_k)$.
	\end{proof}

	The next lemma bounds the feasibility violation of the solution returned by  Algorithm~\ref{alg:adap-APG1} when applied to \eqref{eq:iALMsub}.
	\begin{lemma}\label{thm:ippconverge}
		Suppose Assumptions~\ref{assume:stochastic} and \ref{assume:convexconstraint} hold.
		Given $\gamma_k>\rho_0$ and $\beta_k>0$ for $k\ge0$, let $\phi_k$ be defined in \eqref{eq:compositepart} with $\vc(\bx)=\bA\bx-\bb$, $\widehat\bx^{(k)}$ be defined in \eqref{eq:phit}, and $\bar\bx^{(k+1)}$ be generated as in Algorithm~\ref{alg:iPPP}. Then for any $k\geq0$, 
		\small
		\begin{equation}\label{eq:ippconverge-2}
			\textstyle	\|\bA\bar\bx^{(k+1)}-\bb\|^2+\big\|[\vf(\bar\bx^{(k+1)})]_+\big\|^2\leq \frac{4 \hat{\varepsilon}_kD}{\beta_k}+\frac{4\|\widehat\by^{(k)}\|^2}{\beta_k^2}+\frac{4\|\widehat\blambda^{(k)}\|^2}{\beta_k^2}.
		\end{equation}
		\normalsize
	\end{lemma}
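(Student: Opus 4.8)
The plan is to compare the approximate penalized iterate $\bar\bx^{(k+1)}$ against the exact constrained proximal solution $\widehat\bx^{(k)}$ by substituting the latter into the inexact-optimality inequality \eqref{eq:nesterovconv1} and then exploiting the KKT system \eqref{eq:KKTproxk}. Since $\gamma_k>\rho_0$, the function $\phi_k$ is strongly convex, so \eqref{eq:nesterovconv1} applies; taking $\bx=\widehat\bx^{(k)}\in\X$ there and recalling that $\widehat\bx^{(k)}$ is feasible (so its penalty term vanishes) yields
$$\textstyle\frac{\beta_k}{2}\big(\|\bA\bar\bx^{(k+1)}-\bb\|^2+\|[\vf(\bar\bx^{(k+1)})]_+\|^2\big)\le\hat\varepsilon_kD+P(\widehat\bx^{(k)})-P(\bar\bx^{(k+1)}),$$
where $P(\bx):=f_0(\bx)+g(\bx)+\frac{\gamma_k}{2}\|\bx-\bar\bx^{(k)}\|^2$ denotes the penalty-free proximal objective. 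Everything then reduces to bounding $P(\widehat\bx^{(k)})-P(\bar\bx^{(k+1)})$ from above.

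First I would use the convexity of $P$ together with the subgradient inclusion recorded in \eqref{eq:KKTproxk1}. That identity says $-\bA^\top\widehat\by^{(k)}-J_\vf(\widehat\bx^{(k)})^\top\widehat\blambda^{(k)}$ is a subgradient of $P$ at $\widehat\bx^{(k)}$ (since $\widehat\bzt^{(k)}\in\partial g(\widehat\bx^{(k)})$), so the subgradient inequality gives
$$P(\widehat\bx^{(k)})-P(\bar\bx^{(k+1)})\le\left\langle\bA^\top\widehat\by^{(k)}+J_\vf(\widehat\bx^{(k)})^\top\widehat\blambda^{(k)},\,\bar\bx^{(k+1)}-\widehat\bx^{(k)}\right\rangle.$$
For the affine part, feasibility $\bA\widehat\bx^{(k)}=\bb$ turns $\langle\widehat\by^{(k)},\bA(\bar\bx^{(k+1)}-\widehat\bx^{(k)})\rangle$ into $\langle\widehat\by^{(k)},\bA\bar\bx^{(k+1)}-\bb\rangle$, which Cauchy--Schwarz bounds by $\|\widehat\by^{(k)}\|\,\|\bA\bar\bx^{(k+1)}-\bb\|$.

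The crux is the inequality-constraint term. For each $i$, convexity of $f_i$ gives $\langle\nabla f_i(\widehat\bx^{(k)}),\bar\bx^{(k+1)}-\widehat\bx^{(k)}\rangle\le f_i(\bar\bx^{(k+1)})-f_i(\widehat\bx^{(k)})$; weighting by $\widehat\lambda_i^{(k)}\ge0$, invoking complementary slackness \eqref{eq:KKTproxk3} to annihilate the $f_i(\widehat\bx^{(k)})$ terms, and using $f_i\le[f_i]_+$ yields $\langle\widehat\blambda^{(k)},J_\vf(\widehat\bx^{(k)})(\bar\bx^{(k+1)}-\widehat\bx^{(k)})\rangle\le\|\widehat\blambda^{(k)}\|\,\|[\vf(\bar\bx^{(k+1)})]_+\|$. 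The step I expect to require the most care is precisely this passage from the raw constraint values to their positive parts, since it is what aligns the bound with the left-hand feasibility measure while using only $\widehat\blambda^{(k)}\ge\mathbf{0}$ and complementary slackness. Finally, writing $a:=\|\bA\bar\bx^{(k+1)}-\bb\|$ and $b:=\|[\vf(\bar\bx^{(k+1)})]_+\|$, I would substitute these bounds to get $\frac{\beta_k}{2}(a^2+b^2)\le\hat\varepsilon_kD+\|\widehat\by^{(k)}\|a+\|\widehat\blambda^{(k)}\|b$, and then apply Young's inequality $\|\widehat\by^{(k)}\|a\le\frac{\beta_k}{4}a^2+\frac{1}{\beta_k}\|\widehat\by^{(k)}\|^2$ (and likewise for the $b$-term) to absorb the quadratic terms into the left side, leaving $\frac{\beta_k}{4}(a^2+b^2)\le\hat\varepsilon_kD+\frac{1}{\beta_k}\|\widehat\by^{(k)}\|^2+\frac{1}{\beta_k}\|\widehat\blambda^{(k)}\|^2$; multiplying through by $4/\beta_k$ gives exactly \eqref{eq:ippconverge-2}.
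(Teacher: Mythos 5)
Your proposal is correct and follows essentially the same route as the paper: plug $\widehat\bx^{(k)}$ into \eqref{eq:nesterovconv1}, use the KKT system \eqref{eq:KKTproxk} of the exact proximal subproblem to bound the penalty-free objective gap by the constraint cross terms $(\widehat\by^{(k)})^\top(\bA\bar\bx^{(k+1)}-\bb)+\sum_i\widehat\lambda_i^{(k)}f_i(\bar\bx^{(k+1)})$, and absorb these via Young's inequality. The only cosmetic difference is that you unpack the paper's one-line ``convexity of the objective of \eqref{eq:phit}'' step into an explicit subgradient inequality for $P$ plus per-constraint convexity and complementary slackness, and insert a Cauchy--Schwarz before Young's where the paper applies Young's componentwise; both yield the identical bound.
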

	\begin{proof}
		Notice that when $\gamma_k>\rho_0$, $\phi_k$ in \eqref{eq:compositepart} is convex. Hence, letting	 $\bx=\widehat\bx^{(k)}$ in \eqref{eq:nesterovconv1}, we have from the feasibility of $\widehat\bx^{(k)}$ for \eqref{eq:phit} that
		\small
		\begin{align}
			\nonumber
			f_0(\bar\bx^{(k+1)})+&~g(\bar\bx^{(k+1)})\frac{\gamma_k}{2}\|\bar \bx^{(k+1)}-\bar\bx^{(k)}\|^2
			+\frac{\beta_k}{2}\left(\|\bA\bar \bx^{(k+1)}-\bb\|^2 + \big\|[\vf(\bar \bx^{(k+1)})]_+\big\|^2\right)\\\label{eq:ippconverge-1}
			\leq&~ f_0(\widehat\bx^{(k)})+g(\widehat\bx^{(k)})+\frac{\gamma_k}{2}\|\widehat\bx^{(k)}-\bar\bx^{(k)}\|^2+ \hat{\varepsilon}_kD.
		\end{align}
		\normalsize

		Recall that $\widehat\by^{(k)}$ and $\widehat\blambda^{(k)}$ are the Lagrangian multipliers satisfying \eqref{eq:KKTproxk}. Hence, from the convexity of the objective function of \eqref{eq:phit}, we have
		\small
		\begin{align}
			\nonumber
			\textstyle	f_0(\widehat\bx^{(k)})+g(\widehat\bx^{(k)})+\frac{\gamma_k}{2}\|\widehat\bx^{(k)}-\bar\bx^{(k)} \|^2
			\textstyle	\le&~f_0(\bar \bx^{(k+1)})+g(\bar \bx^{(k+1)})+\frac{\gamma_k}{2}\|\bar \bx^{(k+1)}-\bar\bx^{(k)}\|^2\\\nonumber
			&\textstyle+(\widehat\by^{(k)})^\top(\bA\bar \bx^{(k+1)}-\bb)+\sum_{i=1}^m\widehat\lambda_i^{(k)}f_i(\bar \bx^{(k+1)}).
		\end{align}
		\normalsize
		The above inequality, together with \eqref{eq:ippconverge-1}, implies
		\small
		\begin{align}\nonumber
			\textstyle	\hat{\varepsilon}_kD\geq&~\textstyle\frac{\beta_k}{2}\left(\|\bA\bar \bx^{(k+1)}-\bb\|^2+\big\|[\vf(\bar \bx^{(k+1)})]_+\big\|^2\right)-(\widehat\by^{(k)})^\top(\bA\bar \bx^{(k+1)}-\bb) \\ \label{eq:ineq-tmp1} &~\textstyle-\sum_{i=1}^m\widehat\lambda_i^{(k)}f_i(\bar \bx^{(k+1)}).
		\end{align}
		\normalsize
		By the Young's inequality, it holds 
		\small
		\begin{align*}
			&\textstyle-(\widehat\by^{(k)})^\top(\bA\bar \bx^{(k+1)}-\bb)\ge-\frac{\|\widehat\by^{(k)}\|^2}{\beta_k}-\frac{\beta_k}{4}\|\bA\bar \bx^{(k+1)}-\bb\|^2,\\
			& \textstyle-\sum_{i=1}^m\widehat\lambda_i^{(k)}f_i(\bar \bx^{(k+1)})\ge -\sum_{i=1}^m\frac{(\widehat\lambda_i^{(k)})^2}{\beta_k}-\sum_{i=1}^m\frac{\beta_k}{4}[f_i(\bar \bx^{(k+1)})]_+^2.
		\end{align*}
		\normalsize
		Plugging the above two inequalities into \eqref{eq:ineq-tmp1} gives the desired result.  	
	\end{proof}

	\subsection{The complexity of the iPPP method}
	In this subsection, we specify the parameters in  Algorithm~\ref{alg:iPPP} and estimate its complexity in order to find an $\varepsilon$-stationary point of \eqref{eq:gco-cvx}. 
	
	\begin{theorem}\label{thm:complexity-varying-beta-new}
		Suppose that Assumptions~\ref{assume:stochastic} and \ref{assume:convexconstraint} hold and the parameters $\{\gamma_k\}$, $\{\beta_k\}$ and  $\{\hat{\varepsilon}_k\}$ in Algorithm~\ref{alg:iPPP} are chosen as 
		\begin{equation}\label{eq:set-beta-vary}
			\gamma_k=\gamma>\rho_0,\quad \beta_k=\beta\sqrt{k+1},\quad\text{ and } \quad\hat{\varepsilon}_k= \frac{1}{\beta_k (k+1)},
		\end{equation}  
		where $\beta>0$ is a constant. If $\{R_k\}$ is generated by Option I, it holds for any $K\ge1$ that 
		\small
		\begin{align}\label{eq:convergeinK1}
			\max\left\{\textbf{S}_{R_K},\textbf{F}_{R_K},\textbf{C}_{R_K}\right\}
			\leq& \textstyle\frac{3}{\beta K}+\sqrt{\frac{2\gamma\mathcal{C}_1}{K}}+\frac{4\sqrt{D+M_y^2+M_{\lambda}^2}}{\beta \sqrt{K}}+ \frac{8(D+M_y^2+M_\lambda^2)}{\beta \sqrt{K}},
		\end{align}
		\normalsize
		where $\{(\textbf{S}_k,\textbf{F}_k,\textbf{C}_k)\}_{k\ge 1}$ is defined in \eqref{eq:def-sfc}, $M_y = M_y(\gamma)$, $M_\lambda= M_\lambda(\gamma)$ defined in \eqref{eq:def-Mlam}, and 
		\small	
		\begin{eqnarray}
			\label{eq:constantC1}
			\mathcal{C}_1=2B_{f_0}+2G+\frac{\beta}{2}\|\bA\bar \bx^{(0)}-\bb\|^2+\frac{\beta}{2}\big\|[\vf(\bar \bx^{(0)})]_+\big\|^2+\frac{3}{\beta}\left(2D+M_y^2+M_\lambda^2\right).
		\end{eqnarray}
		\normalsize
	\end{theorem}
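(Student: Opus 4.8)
The plan is to bound the three running averages $\frac{1}{K}\sum_{k=0}^{K-1}\textbf{S}_{k+1}$, $\frac{1}{K}\sum_{k=0}^{K-1}\textbf{F}_{k+1}$, and $\frac{1}{K}\sum_{k=0}^{K-1}\textbf{C}_{k+1}$ separately, since Option~I together with \eqref{eq:threeterms} reduces $\max\{\textbf{S}_{R_K},\textbf{F}_{R_K},\textbf{C}_{R_K}\}$ to their sum. These three averages will produce, respectively, the pair of terms $\frac{3}{\beta K}+\sqrt{2\gamma\mathcal{C}_1/K}$, the term $4\sqrt{D+M_y^2+M_\lambda^2}/(\beta\sqrt K)$, and the term $8(D+M_y^2+M_\lambda^2)/(\beta\sqrt K)$ in the claimed bound. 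The workhorse for all three is the a~priori feasibility estimate of Lemma~\ref{thm:ippconverge}: under the present schedule it reads $\|\vc(\bar\bx^{(k+1)})\|^2+\|[\vf(\bar\bx^{(k+1)})]_+\|^2\le \frac{4D}{\beta^2(k+1)^2}+\frac{4(M_y^2+M_\lambda^2)}{\beta^2(k+1)}$, obtained by substituting $\hat\varepsilon_k=\beta^{-1}(k+1)^{-3/2}$, $\beta_k=\beta\sqrt{k+1}$, and the uniform multiplier bounds $\|\widehat\by^{(k)}\|\le M_y$, $\|\widehat\blambda^{(k)}\|\le M_\lambda$ from Lemma~\ref{thm:boundlambda} (constant in $k$ since $\gamma_k\equiv\gamma$).

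For the $\textbf{S}$-average I would start from \eqref{eq:xtilde-x-dual-feas-option3}. The term $\frac{1}{K}\sum\hat\varepsilon_k$ is handled directly: $\sum_{k=0}^{K-1}\hat\varepsilon_k=\beta^{-1}\sum_{j=1}^{K}j^{-3/2}\le 3/\beta$, giving $3/(\beta K)$. For the proximal-increment term I apply Cauchy--Schwarz, $\frac{\gamma}{K}\sum\|\bar\bx^{(k+1)}-\bar\bx^{(k)}\|\le \frac{\gamma}{\sqrt K}\big(\sum\|\bar\bx^{(k+1)}-\bar\bx^{(k)}\|^2\big)^{1/2}$, and then bound the sum of squared increments by $2\mathcal{C}_1/\gamma$ via \eqref{eq:nesterovconv2-vary-beta-nc-constant}. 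This is the step I expect to be the crux: I must show the right-hand side of \eqref{eq:nesterovconv2-vary-beta-nc-constant} is at most $\mathcal{C}_1$. After dropping the nonnegative penalty term on the left and keeping $\frac{\gamma}{2}\sum\|\bar\bx^{(k+1)}-\bar\bx^{(k)}\|^2$, I isolate the $\bar\bx^{(0)}$ contribution (which becomes the $\frac{\beta}{2}(\|\bA\bar\bx^{(0)}-\bb\|^2+\|[\vf(\bar\bx^{(0)})]_+\|^2)$ part of $\mathcal{C}_1$), bound $\frac{1}{2}\sum_{k=1}^{K-1}(\beta_k-\beta_{k-1})(\|\vc(\bar\bx^{(k)})\|^2+\|[\vf(\bar\bx^{(k)})]_+\|^2)$ by inserting the Lemma~\ref{thm:ippconverge} estimate together with $\beta_k-\beta_{k-1}=\beta(\sqrt{k+1}-\sqrt k)\le \frac{\beta}{2\sqrt k}$, and bound $(\sum\hat\varepsilon_k)D\le 3D/\beta$. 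The feasibility sum collapses to $\frac{1}{\beta}\sum_{k\ge1}\big(Dk^{-5/2}+(M_y^2+M_\lambda^2)k^{-3/2}\big)\le \frac{3}{\beta}(D+M_y^2+M_\lambda^2)$, and adding $3D/\beta$ yields exactly the $\frac{3}{\beta}(2D+M_y^2+M_\lambda^2)$ term in $\mathcal{C}_1$. Hence $\sum\|\bar\bx^{(k+1)}-\bar\bx^{(k)}\|^2\le 2\mathcal{C}_1/\gamma$ and the $\textbf{S}$-average is at most $\frac{3}{\beta K}+\sqrt{2\gamma\mathcal{C}_1/K}$.

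For the $\textbf{F}$-average I factor the Lemma~\ref{thm:ippconverge} bound as $\textbf{F}_{k+1}^2\le \frac{4}{\beta^2(k+1)}(D+M_y^2+M_\lambda^2)$ (using $(k+1)^{-1}\le 1$), take square roots, and sum with $\sum_{j=1}^{K}j^{-1/2}\le 2\sqrt K$ to get $4\sqrt{D+M_y^2+M_\lambda^2}/(\beta\sqrt K)$. For the $\textbf{C}$-average I first record the identity $\textbf{C}_{k+1}=\beta_k\|[\vf(\bar\bx^{(k+1)})]_+\|^2$, which follows from $\bar\blambda^{(k+1)}=\beta_k[\vf(\bar\bx^{(k+1)})]_+$ and the elementary fact $[t]_+|t|=[t]_+^2$; hence $\textbf{C}_{k+1}\le\beta_k\textbf{F}_{k+1}^2\le \frac{4(D+M_y^2+M_\lambda^2)}{\beta\sqrt{k+1}}$, and the same $\sum j^{-1/2}\le 2\sqrt K$ bound gives $8(D+M_y^2+M_\lambda^2)/(\beta\sqrt K)$. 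Summing the three contributions produces the stated inequality. The only genuinely delicate point is a circularity—the feasibility violations that must be controlled in $\textbf{F}$ and $\textbf{C}$ also sit inside the descent inequality governing $\textbf{S}$—and it is broken precisely because Lemma~\ref{thm:ippconverge} bounds each violation by quantities ($M_y$, $M_\lambda$, $\hat\varepsilon_k$) independent of the iterate trajectory; the schedule $\beta_k=\beta\sqrt{k+1}$, $\hat\varepsilon_k=\beta^{-1}(k+1)^{-3/2}$ is then exactly what makes every resulting $p$-series either summable or of order $\sqrt K$.
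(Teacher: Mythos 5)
Your proposal is correct and follows essentially the same route as the paper's proof: the same decomposition via \eqref{eq:threeterms}, the same use of Lemmas~\ref{thm:boundlambda} and \ref{thm:ippconverge} together with \eqref{eq:nesterovconv2-vary-beta-nc-constant} and \eqref{eq:xtilde-x-dual-feas-option3}, the same Cauchy--Schwarz step, and the same $p$-series estimates, arriving at the identical constant $\mathcal{C}_1$. The only cosmetic difference is that you retain the sharper $(k+1)^{-2}$ decay on the $D$-term of the feasibility estimate before relaxing it, whereas the paper relaxes immediately to $4(D+M_y^2+M_\lambda^2)/\beta_k^2$; both yield the same bounds.
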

	
	\begin{proof}
		Notice that $\phi_k$ is convex when $\gamma_k > \rho_0$. Hence, \eqref{eq:nesterovconv2-vary-beta-nc-constant} holds.
		
		Since $\gamma_k=\gamma$ for all $k$, we have from Lemma~\ref{thm:boundlambda} that $\|\blambda^{(k)}\|\le M_\lambda$ and $\|\by^{(k)}\|\le M_y$ for all $k$. Hence, it follows from \eqref{eq:ippconverge-2} and \eqref{eq:set-beta-vary} that
		{\small
			\begin{equation}
				\label{eq:xtilde-x-slack-feasible-adapbeta-option3}
				\textstyle	\|\bA\bar\bx^{(k+1)}-\bb\|^2+\big\|[\vf(\bar\bx^{(k+1)})]_+\big\|^2\leq \frac{4\hat\varepsilon_k D}{\beta_k}+\frac{4(M_y^2+M_{\lambda}^2)}{\beta_k^2}
				\leq \frac{4(D+M_y^2+M_{\lambda}^2)}{\beta_k^2},
			\end{equation}
		}for any $k\ge 0$. Since $\beta_k = \beta\sqrt{k+1}$, we have from the above inequality that
		{\small
			\begin{equation}\label{eq:term-beta-case1}(\beta_k-\beta_{k-1})\left(\|\bA\bar \bx^{(k)}-\bb\|^2+\big\|[\vf(\bar \bx^{(k)})]_+\big\|^2\right)\le \frac{(\sqrt{k+1}-\sqrt{k})}{\beta k}\left(4D+4M_y^2+4M_\lambda^2\right).
			\end{equation}
		}Noting $\sqrt{k+1}-\sqrt{k}=\frac{1}{\sqrt{k}+\sqrt{k+1}}\leq \frac{1}{2\sqrt{k}}$ and $\sum_{k=1}^{K-1}k^{-\frac{3}{2}}\leq 1+\int_{1}^{K-1}x^{-\frac{3}{2}}dx\leq 3$, we sum up \eqref{eq:term-beta-case1} to have
		\small
		\begin{align}\label{eq:nesterovconv2-vary-sumbeta-0}
			&\sum_{k=1}^{K-1}(\beta_k-\beta_{k-1})\left(\|\bA\bar \bx^{(k)}-\bb\|^2+\big\|[\vf(\bar \bx^{(k)})]_+\big\|^2\right)
			\leq \frac{6}{\beta}\left(D+M_y^2+M_\lambda^2\right).
		\end{align} 
		\normalsize
		In addition, by the setting of $\hat{\varepsilon}_k$ in \eqref{eq:set-beta-vary}, it holds
		\small
		\begin{align}
			\label{eq:nesterovconv2-vary-sumepsilonhat}
			\textstyle	\sum_{k=0}^{K-1}\hat{\varepsilon}_k=\frac{1}{\beta }\sum_{k=0}^{K-1}(k+1)^{-\frac{3}{2}} \leq \frac{1}{\beta}\left(1+\int_{1}^{K}x^{-\frac{3}{2}}dx\right)\leq\frac{3}{\beta }.
		\end{align}
		\normalsize
		
		Now plugging \eqref{eq:nesterovconv2-vary-sumbeta-0} and \eqref{eq:nesterovconv2-vary-sumepsilonhat} into \eqref{eq:nesterovconv2-vary-beta-nc-constant} with $\vc(\bx)=\bA\bar \bx-\bb$,  	
		we obtain
		\small
		\begin{align}\label{eq:nesterovconv2-vary-beta-new}
			\textstyle \sum_{k=0}^{K-1}\frac{\gamma_k}{2}\|\bar \bx^{(k+1)}-\bar\bx^{(k)}\|^2\le \mathcal{C}_1,
		\end{align}
		\normalsize
		where we have used the definition of $\mathcal{C}_1$ in \eqref{eq:constantC1}.
		From \eqref{eq:nesterovconv2-vary-beta-new}, the Cauchy-Schwarz inequality, and the setting $\gamma_k=\gamma, \forall k$, it follows
		\small
		\begin{eqnarray}
			\textstyle	\frac{1}{K}\sum_{k=0}^{K-1}\gamma_k\|\bar \bx^{(k+1)}-\bar\bx^{(k)}\|
			&\leq& \textstyle\frac{1}{K}\sqrt{\sum_{k=0}^{K-1}\gamma_k}\sqrt{\sum_{k=0}^{K-1}\gamma_k\|\bar\bx^{(k+1)}-\bar\bx^{(k)}\|^2}\label{eq:ineq5-varybeta-new-l1}\\
			&\leq&\textstyle\sqrt{\frac{2\gamma\mathcal{C}_1}{K}}.
			\label{eq:ineq5-varybeta-new}
		\end{eqnarray}
		\normalsize	
		Applying \eqref{eq:nesterovconv2-vary-sumepsilonhat} and \eqref{eq:ineq5-varybeta-new} to \eqref{eq:xtilde-x-dual-feas-option3} gives 
		\small
		\begin{align}
			\label{eq:xtilde-x-dual-feas-option3-avg}
			\textstyle\frac{1}{K}\sum_{k=0}^{K-1}\textbf{S}_{k+1}
			\leq\frac{3}{\beta K}+\sqrt{\frac{2\gamma\mathcal{C}_1}{K}}.
		\end{align}
		\normalsize
		
		
		
		Futhermore, by the definition of $\textbf{F}_{k+1}$ in \eqref{eq:def-F}, we have from \eqref{eq:xtilde-x-slack-feasible-adapbeta-option3} that	
		\small
		\begin{align}
			\label{eq:xtilde-x-stationary-adapbeta-option3-avg}
			\textstyle \frac{1}{K}\sum_{k=0}^{K-1}\textbf{F}_{k+1}\leq \frac{1}{K}\sum_{k=0}^{K-1}\frac{2\sqrt{D+M_y^2+M_{\lambda}^2}}{\beta_k}\leq\frac{4\sqrt{D+M_y^2+M_{\lambda}^2}}{\beta \sqrt{K}},
		\end{align}
		\normalsize
		where we have used the following arguments:
		\small
		\begin{equation}\label{eq:sum-1-betak}
			\textstyle	\frac{1}{ K}\sum_{k=0}^{K-1}\frac{1}{\beta_k}=\frac{1}{\beta K}\sum_{k=0}^{K-1}\frac{1}{\sqrt{k+1}}\leq \frac{1}{\beta K}\int_{0}^{K}x^{-\frac{1}{2}}dx\leq \frac{2}{\beta \sqrt{K}}.
		\end{equation}
		\normalsize
		
		Finally, by $\bar\lambda^{(k+1)}_i=\beta_k [f_i(\bar\bx^{(k+1)})]_+$ and also using \eqref{eq:xtilde-x-slack-feasible-adapbeta-option3}, we have
		\small
		\begin{equation}
			\label{eq:xtilde-x-slack-adapbeta-option3-k}
			\textstyle	\sum_{i=1}^m|\bar\lambda^{(k+1)}_i f_i(\bar\bx^{(k+1)})|=\beta_k\big\|[\vf(\bar\bx^{(k+1)})]_+\big\|^2
			\leq\frac{ 4(D+M_y^2+M_\lambda^2)}{\beta_k},
		\end{equation}
		\normalsize
		%
		Hence, from the definition of $\textbf{C}_{k+1}$ in \eqref{eq:def-C}, we average the above inequality to have
		\begin{equation}
			\label{eq:xtilde-x-slack-adapbeta-option3-avg}
			\textstyle	\frac{1}{K}\sum_{k=0}^{K-1}\textbf{C}_{k+1}
			\leq\frac{1}{K}\sum_{k=0}^{K-1}\frac{ 4(D+M_y^2+M_\lambda^2)}{\beta_k}\leq \frac{8(D+M_y^2+M_\lambda^2)}{\beta \sqrt{K}},
		\end{equation}
		where we have used \eqref{eq:sum-1-betak} again. 
		
		Now the result in \eqref{eq:convergeinK1} follows by plugging \eqref{eq:xtilde-x-dual-feas-option3-avg}, \eqref{eq:xtilde-x-stationary-adapbeta-option3-avg}, and \eqref{eq:xtilde-x-slack-adapbeta-option3-avg} into \eqref{eq:threeterms}. 
	\end{proof}
	
	\begin{remark}
		In the parameter setting \eqref{eq:set-beta-vary}, we require the knowledge of the weak-convexity constant $\rho_0$. In case it is unknown but the smoothness constant $L_{f_0}$ is known, we can set $\gamma > L_{f_0}$. Without knowledge of $\rho_0$ or $L_{f_0}$, we cannot guarantee strong convexity of the function $\phi_k$ given in \eqref{eq:compositepart}. To the best of our knowledge, smoothness constants are assumed in all existing works on the complexity analysis of first-order methods for non-convex problems, e.g., \cite{DBLP:journals/mp/GhadimiL16, kong2019complexity, cartis2011evaluation}.
	\end{remark}

	According to Theorem~\ref{thm:complexity-varying-beta-new}, the convergence rate of Algorithm~\ref{alg:iPPP} is $O(\frac{1}{\sqrt{K}})$, in terms of the number of outer iterations. Together with the complexity result of the AdapAPG subroutine, we below give the overall computational complexity of Algorithm~\ref{alg:iPPP} for finding an $\varepsilon$-stationary point of \eqref{eq:gco-cvx}. 
	\begin{corollary}[complexity result]
		Under the assumptions of Theorem~\ref{thm:complexity-varying-beta-new},
		let 
		{\small	$$K=\left\lceil \max\left\{\frac{6}{\beta\varepsilon},\, \frac{4}{\varepsilon^2}\left[\sqrt{2\gamma\mathcal{C}_1}+\frac{4\sqrt{D+M_y^2+M_{\lambda}^2}}{\beta }+ \frac{8(D+M_y^2+M_\lambda^2)}{\beta }\right]^2\right\}\right\rceil = O({1}/{\varepsilon^2}),$$ }where $\mathcal{C}_1$ is defined as in \eqref{eq:constantC1}. Then
		$\bar\bx^{(R_K)}$ is an $\varepsilon$-stationary point of \eqref{eq:gco-cvx}, and 
		the total complexity to produce $\bar\bx^{(R_K)}$ is $\tilde O\left(1/\varepsilon^{\frac{5}{2}}\right)$.
	\end{corollary}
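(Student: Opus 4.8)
The plan is to break the argument into two parts: first verify that $\bar\bx^{(R_K)}$ satisfies Definition~\ref{def:eps-stationary-pt} for the stated choice of $K$, and then bound the total number of proximal gradient steps by summing the per-subproblem cost of AdapAPG.

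For the $\varepsilon$-stationarity, I would start from the bound \eqref{eq:convergeinK1} of Theorem~\ref{thm:complexity-varying-beta-new} (with $R_K$ taken from Option~I) and observe that its right-hand side splits into one $O(1/K)$ term, namely $\frac{3}{\beta K}$, and three $O(1/\sqrt{K})$ terms whose sum equals $\frac{1}{\sqrt{K}}\bigl[\sqrt{2\gamma\mathcal{C}_1}+\frac{4\sqrt{D+M_y^2+M_\lambda^2}}{\beta}+\frac{8(D+M_y^2+M_\lambda^2)}{\beta}\bigr]$. The two entries inside the $\max$ defining $K$ are engineered exactly so that $K\ge\frac{6}{\beta\varepsilon}$ forces $\frac{3}{\beta K}\le\frac{\varepsilon}{2}$, while $K\ge\frac{4}{\varepsilon^2}[\cdots]^2$ forces the remaining three terms to sum to at most $\frac{\varepsilon}{2}$. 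Adding the two halves yields $\max\{\textbf{S}_{R_K},\textbf{F}_{R_K},\textbf{C}_{R_K}\}\le\varepsilon$, so \eqref{eq:eKKT-df}, \eqref{eq:eKKT-pf}, and \eqref{eq:ecomplementaryslackness} all hold at $\bar\bx^{(R_K)}$ with multipliers $\bar\by^{(R_K)}$ and $\bar\blambda^{(R_K)}$. The only remaining requirement is the exact complementary-slackness condition $\bar\lambda_i=0$ whenever $f_i<0$; this is automatic, since $\bar\lambda_i^{(k+1)}=\beta_k[f_i(\bar\bx^{(k+1)})]_+$ vanishes as soon as $f_i(\bar\bx^{(k+1)})<0$. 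Hence $\bar\bx^{(R_K)}$ is an $\varepsilon$-stationary point of \eqref{eq:gco-cvx}.

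For the complexity, I would bound the cost of the $k$-th call to AdapAPG and then sum. By \eqref{eq:Lk} the smoothness constant obeys $L_{\phi_k}=O(\beta_k)=O(\sqrt{k+1})$, whereas the strong-convexity modulus is the fixed positive constant $\mu_{\phi_k}=\gamma-\rho_0$; thus the condition number is $\kappa_{\phi_k}=L_{\phi_k}/(\gamma-\rho_0)=O(\sqrt{k+1})$. Invoking Theorem~\ref{thm:conv-adap-APG1}, whose leading factor is $\sqrt{L_{\phi_k}/\mu_{\phi_k}}$ multiplied only by factors logarithmic in $\kappa_{\phi_k}$, in $1/\hat\varepsilon_k$, in $\|\bp^{\mathrm{ini}}\|$, and in $\mu_0/\mu_{\phi_k}$, the $k$-th subproblem is solved in $\tilde O\bigl(\kappa_{\phi_k}^{1/2}\bigr)=\tilde O\bigl((k+1)^{1/4}\bigr)$ proximal gradient steps. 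Summing, $\sum_{k=0}^{K-1}(k+1)^{1/4}=O(K^{5/4})$, so the total is $\tilde O(K^{5/4})$; substituting $K=O(1/\varepsilon^2)$ gives $\tilde O(1/\varepsilon^{5/2})$, as claimed.

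The step demanding the most care is justifying that the logarithmic factors in Theorem~\ref{thm:conv-adap-APG1} genuinely collapse into $\tilde O$ uniformly in $k$. First, the initial mapping norm $\|\bp^{\mathrm{ini}}\|$ for subproblem $k$ involves $\nabla\phi_k$ at the warm start $\bar\bx^{(k)}$, whose size grows with $\beta_k$; using the compactness of $\X$ together with the uniform bounds \eqref{eq:bd-cj-fi} one checks that $\|\bp^{\mathrm{ini}}\|$ is at most polynomial in $k$ and the data, so its logarithm is $O(\log(1/\varepsilon))$. Second, because $\mu_{\phi_k}=\gamma-\rho_0$ is the same positive constant for every $k$, the adaptively maintained estimate $\hat\mu$ carried across calls stabilizes after finitely many restarts; the geometric-search factor $\lceil\log_{\gamma_{sc}}(\mu_0/\mu_{\phi})\rceil$ then contributes only a bounded number of phases, and the estimate never falls below $\mu_{\phi_k}/\gamma_{sc}$, so the genuine leading dependence is $\sqrt{L_{\phi_k}/\mu_{\phi_k}}$. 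This is precisely what makes the per-subproblem cost scale like $(k+1)^{1/4}$ and produces the $K^{5/4}$ aggregate rather than a worse power of $K$.
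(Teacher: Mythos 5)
Your proposal is correct and follows essentially the same route as the paper's own proof: plug the chosen $K$ into the bound \eqref{eq:convergeinK1} to get $\varepsilon$-stationarity, then sum the per-subproblem AdapAPG costs $T_k=\tilde O\bigl(\sqrt{L_{\phi_k}/(\gamma-\rho_0)}\bigr)=\tilde O\bigl((k+1)^{1/4}\bigr)$ to obtain $\tilde O(K^{5/4})=\tilde O(1/\varepsilon^{5/2})$. Your additional details --- the explicit $\varepsilon/2+\varepsilon/2$ split, the observation that $\bar\lambda_i^{(k+1)}=\beta_k[f_i(\bar\bx^{(k+1)})]_+$ makes the exact complementary-slackness requirement automatic, and the check that the logarithmic factors in Theorem~\ref{thm:conv-adap-APG1} stay uniformly $O(\log(1/\varepsilon))$ --- are all consistent with, and slightly more careful than, the paper's terser argument.
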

	\begin{proof}
		With the given $K$, the right hand side of  \eqref{eq:convergeinK1} is upper bounded by $\varepsilon$. Hence,  $\bar\bx^{(R_K)}$ is an $\varepsilon$-stationary point of \eqref{eq:gco-cvx}.  
		
		Let $T_k$ be the number of proximal gradient steps performed by Algorithm~\ref{alg:adap-APG1} in the $k$-th call by Algorithm~\ref{alg:iPPP}.
		Then according to Theorem~\ref{thm:conv-adap-APG1}, the settings of $\gamma_k$ and $\beta_k$ in \eqref{eq:set-beta-vary}, and the formula of $L_{\phi_k}$ in \eqref{eq:Lk}, we have 
		$
		\textstyle	T_k=\tilde O\left(\sqrt{\frac{L_{\phi_k}}{\gamma_k-\rho_0}}\right) =\tilde O\left(\sqrt{\frac{\beta_k}{\gamma-\rho_0}}\right)=\tilde O\left(\frac{(k+1)^{\frac{1}{4}}}{\sqrt{\gamma-\rho_0}}\right), 
		$
		for $k=0,1,\dots,K-1$.
		Therefore, the total complexity is
		$\textstyle T_{\mathrm{total}} = \sum_{k=0}^{K-1} T_k = \sum_{k=0}^{K-1} \tilde O\left(\frac{(k+1)^{\frac{1}{4}}}{\sqrt{\gamma-\rho_0}}\right)=\tilde O\left(\frac{K^{\frac{5}{4}}}{\sqrt{\gamma-\rho_0}}\right) =  \tilde O\left(1/\varepsilon^{\frac{5}{2}}\right),$
		which completes the proof.
	\end{proof}

	\section{Complexity of the iPPP method with non-convex constraints}
	\label{sec:penaltymethodnonconvex}
	In this section, we consider the problem in \eqref{eq:gco} with a non-convex objective and non-convex constraints. Instead of Assumption~\ref{assume:convexconstraint}, 
	we make the following assumption.
	\begin{assumption}
		\label{assume:nonconvexconstraintnew}
		$f_i$ is $\rho_i$-weakly convex for $\rho_i \ge 0$ for $i=0,1,\dots,m$. $c_j$ is $\sigma_j$-weakly convex for $\sigma_j\ge0$ for $j=1,\ldots,n$.
	\end{assumption}
	
	The non-convexity of the constraints further increases the difficulty of finding a stationary point of \eqref{eq:gco}. Fortunately, with a sufficiently large $\gamma_k$, the proximal-point penalty subproblem \eqref{eq:iALMsub} is strongly convex under Assumption~\ref{assume:nonconvexconstraintnew} and thus can be effectively solved by Algorithm~\ref{alg:adap-APG1}. By this observation, we show that Algorithm~\ref{alg:iPPP} can still guarantee an approximate stationary solution of \eqref{eq:gco} within a polynomial time. 

	\subsection{Technical lemmas}\label{sec:bd-dual-nc}
	
	To show the complexity result, we first establish a few technical lemmas. A proof of the following lemma has been given in \cite[Lemma 2]{Drusvyatskiy2018}. We present it here for the readers' convenience.
	\begin{lemma}
		\label{thm:weakconvexity}
		Suppose Assumptions~\ref{assume:stochastic} and \ref{assume:nonconvexconstraintnew} hold. For any $\beta>0$, the function $\frac{\beta}{2}[f_i(\bx)]_+^2$ is $(\beta\rho_iB_{f_i})$-weakly convex for $i=1,\dots,m$, and $\frac{\beta}{2}[c_j(\bx)]^2$ is $(\beta\sigma_j B_{c_j})$-weakly convex for $j=1,\ldots,n$.
	\end{lemma}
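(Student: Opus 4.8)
The plan is to normalize away the penalty factor and then verify the first-order characterization of weak convexity directly. Since $\beta>0$ and a positive multiple of a convex function is convex, $\frac{\beta}{2}h+\frac{\beta\rho}{2}\|\cdot\|^2=\beta\big(\frac12 h+\frac{\rho}{2}\|\cdot\|^2\big)$ is convex exactly when $\frac12 h+\frac{\rho}{2}\|\cdot\|^2$ is. Hence it suffices to prove that $\frac12[f_i(\bx)]_+^2$ is $(\rho_i B_{f_i})$-weakly convex and $\frac12 c_j(\bx)^2$ is $(\sigma_j B_{c_j})$-weakly convex. Each of these functions is continuously differentiable, so by the characterization behind \eqref{eq:rho} it is enough to lower bound the Bregman-type quantity $\phi(\bx)-\phi(\by)-\nabla\phi(\by)^\top(\bx-\by)$ by $-\frac{\rho B}{2}\|\bx-\by\|^2$ for all $\bx,\by$, with the advertised modulus.

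For the inequality term I set $\phi=\frac12[f_i]_+^2$, so that $\nabla\phi(\bx)=[f_i(\bx)]_+\nabla f_i(\bx)$. Starting from the scalar convexity of $t\mapsto\frac12[t]_+^2$, whose derivative is $[t]_+$, I get $\frac12[a]_+^2-\frac12[b]_+^2\ge[b]_+(a-b)$; substituting $a=f_i(\bx)$ and $b=f_i(\by)$ gives
\begin{equation*}
\phi(\bx)-\phi(\by)-\nabla\phi(\by)^\top(\bx-\by)\ \ge\ [f_i(\by)]_+\big(f_i(\bx)-f_i(\by)-\nabla f_i(\by)^\top(\bx-\by)\big).
\end{equation*}
The decisive feature is that the prefactor $[f_i(\by)]_+$ is nonnegative, so I may insert the weak-convexity inequality \eqref{eq:rho} for $f_i$ to bound the parenthesis below by $-\frac{\rho_i}{2}\|\bx-\by\|^2$; together with $0\le[f_i(\by)]_+\le B_{f_i}$ from \eqref{eq:bd-fi}, this yields the bound $-\frac{\rho_i B_{f_i}}{2}\|\bx-\by\|^2$, i.e., $(\rho_i B_{f_i})$-weak convexity.

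For the equality term I set $\phi=\frac12 c_j^2$, so $\nabla\phi(\bx)=c_j(\bx)\nabla c_j(\bx)$. The exact identity $\frac12 a^2-\frac12 b^2-b(a-b)=\frac12(a-b)^2$ with $a=c_j(\bx),\,b=c_j(\by)$ splits the same quantity as
\begin{equation*}
\phi(\bx)-\phi(\by)-\nabla\phi(\by)^\top(\bx-\by)=\tfrac12\big(c_j(\bx)-c_j(\by)\big)^2+c_j(\by)\big(c_j(\bx)-c_j(\by)-\nabla c_j(\by)^\top(\bx-\by)\big),
\end{equation*}
in which the first term is nonnegative. When $c_j(\by)\ge0$ the second term is treated exactly as in the inequality case, using \eqref{eq:rho} and $|c_j(\by)|\le B_{c_j}$. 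I expect the case $c_j(\by)<0$ to be the main obstacle: the prefactor now has the wrong sign, so the weak-convexity lower bound no longer helps and the nonnegative quadratic term cannot simply be discarded. My plan is to restore a nonnegative prefactor by writing $c_j^2=[c_j]_+^2+[-c_j]_+^2$ and applying the inequality-case argument to each summand separately, then adding the resulting weak-convexity moduli; this is the mechanism underlying \cite[Lemma~2]{Drusvyatskiy2018}. The only remaining work is to track the constants through this decomposition and re-attach the factor $\beta$ to recover the stated modulus.
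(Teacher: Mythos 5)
Your treatment of $\frac{\beta}{2}[f_i]_+^2$ is exactly the paper's argument: convexity of $t\mapsto\frac12[t]_+^2$ gives the chord inequality with nonnegative slope $[f_i(\bx')]_+\le B_{f_i}$, into which the weak-convexity bound \eqref{eq:rho} for $f_i$ can be inserted. The gap is in the equality part, precisely where you stopped. The paper dismisses this case with ``similarly,'' but it is not similar, and your final step --- decomposing $c_j^2=[c_j]_+^2+[-c_j]_+^2$ and ``tracking the constants'' --- does not recover the stated modulus. Applying the inequality-case argument to $\frac12[-c_j]_+^2$ requires a weak-convexity modulus for $-c_j$, and the only one available is the smoothness constant $L_{c_j}$ (negate \eqref{eq:L} applied to $c_j$), not $\sigma_j$. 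The decomposition therefore yields $\bigl(\sigma_j+L_{c_j}\bigr)B_{c_j}$-weak convexity of $\frac12 c_j^2$, which is strictly worse than the claimed $\sigma_j B_{c_j}$ whenever $L_{c_j}>0$.

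Moreover, the claimed modulus is not just out of reach of this method; it is false in general. Take $d=1$ and $c(x)=x^2-a$ with $a>0$ on a domain containing the origin: $c$ is convex, so $\sigma=0$ and the lemma would assert that $\frac12 c^2$ is convex, yet $\bigl(\tfrac12 c^2\bigr)''(0)=c(0)\,c''(0)=-2a<0$. The sign obstruction you identified at $c_j(\by)<0$ is therefore real and cannot be argued away; the correct modulus replaces $\sigma_j$ by $\max\{\sigma_j,L_{c_j}\}$ (or $\sigma_j+L_{c_j}$ via your decomposition), and the constant $\rho_c$ in \eqref{eq:def-gamma-k} should be adjusted accordingly. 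Since the downstream complexity analysis only needs some finite weak-convexity modulus for the penalty term, the order of the results is unaffected, but neither your proof as written nor the paper's establishes the lemma with the stated constant.
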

	\begin{proof}	
		Since $f_i(\bx)$ is $\rho_i$-weakly convex, we have 
		\small
		$$f_i(\bx)- f_i(\bx')\geq\left\langle \nabla f_i(\bx'),\bx-\bx' \right\rangle-\frac{\rho_i}{2}\|\bx'-\bx\|^2, \, \forall \, \bx, \bx' \in \X.$$ 
		\normalsize
		Using this inequality, the fact $|f_i(\bx)|\leq B_{f_i}$, and also the convexity of $[t]_+^2$ about $t$, we have 
		\small
		\begin{eqnarray*}
			\textstyle		\frac{\beta}{2}[f_i(\bx)]_+^2&\geq& \textstyle\frac{\beta}{2}[f_i(\bx')]_+^2+\beta [f_i(\bx')]_+ \big(f_i(\bx)-f_i(\bx')\big)\\
			&\geq& \textstyle \frac{\beta}{2}[f_i(\bx')]_+^2+\beta [f_i(\bx')]_+\big\langle \nabla f_i(\bx'),\bx-\bx' \big\rangle-\frac{\beta\rho_iB_{f_i}}{2}\|\bx'-\bx\|^2,
		\end{eqnarray*}
		\normalsize
		which implies the $(\beta\rho_iB_{f_i})$-weak convexity of $\frac{\beta}{2}[f_i(\bx)]_+^2$. Similarly, we can show the $(\beta\sigma_j B_{c_j})$-weak convexity of $\frac{\beta}{2}[c_j(\bx)]^2$ for each $j$ and thus complete the proof. 
	\end{proof}
	
	
	With a little abuse of notation, under Assumption~\ref{assume:nonconvexconstraintnew}, $\phi_k$ defined in \eqref{eq:compositepart}   
	is $L_{\phi_k}$-smooth with
	\small
	\begin{eqnarray}
		\label{eq:Lk-case2}
		\textstyle L_{\phi_k} = L_{f_0}+\gamma_k+\beta_k\left(\sum_{i=1}^m B_{f_i}(B_{f_i}+L_{f_i})+\sum_{j=1}^n B_{c_j}(B_{c_j}+L_{c_j})\right).
	\end{eqnarray}
	\normalsize
	Note that the value of $L_{\phi_k}$ is different from that defined in \eqref{eq:Lk}. 
	In addition, by Assumption~\ref{assume:nonconvexconstraintnew} and Lemma~\ref{thm:weakconvexity}, the function $f_0(\bx)+\frac{\beta_k}{2}\left(\|\vc(\bx)\|^2+\big\|[\vf(\bx)]_+\big\|^2\right)$
	is $\Gamma_k$-weakly convex with 
	\small
	\begin{equation}\label{eq:def-gamma-k}
		\textstyle \Gamma_k:=\rho_0 + \beta_k \rho_c,\quad \rho_c = \left(\sum_{i=1}^m \rho_iB_{f_i}+\sum_{j=1}^n \sigma_jB_{c_j}\right).
	\end{equation}
	\normalsize
	%

	\subsection{The complexity of the iPPP method under a non-singularity condition}
	In this subsection, we assume the follows in addition to Assumptions~\ref{assume:stochastic} and \ref{assume:nonconvexconstraintnew}.  
	\begin{assumption}
		\label{assume:nonconvexconstraintsingular}
		There exists a constant $\nu>0$ such that for any $\bx\in\X$, the following inequality holds
		{\small
			\begin{eqnarray}
				\label{eq:mfcq}
				\nu\sqrt{\|[\vf(\bx)]_+\|^2+\|\vc(\bx)\|^2}\leq\mathrm{dist}\left(J_\vc(\bx)^\top \vc(\bx)+J_\vf(\bx)^\top[\vf(\bx)]_+,\,-\mathcal{N}_{\mathcal{X}}(\bx)\right).
			\end{eqnarray}
		}
	\end{assumption}
	This assumption is inspired by a similar assumption made in \cite{NIPS2019_9545,xie2019complexity}, where only equality constraints are considered. It is related to the linear independence constraint qualification (LICQ). Suppose $\bx$ is a feasible solution to \eqref{eq:gco} and satisfies  $\mathcal{N}_{\mathcal{X}}(\bx)=\{\mathbf{0}\}$, i.e., $\bx\in\mathrm{int}(\X)$. Let $I(\bx)=\{1\leq i\leq m|f_i(\bx)\geq 0\}$ be the index set of active inequality constraints at $\bx$. We say that \eqref{eq:gco} satisfies LICQ at $\bx$ if $\{\nabla f_i(\bx)\}_{i\in I(\bx)}\cup\{\nabla c_j(\bx)\}_{j=1}^n$ are linearly independent. Now we extend this condition to any $\bx\in\mathrm{int}(\X)$, which are not necessarily feasible to \eqref{eq:gco}. As a result, there exists $\nu_\bx>0$ such that $\nu_\bx \sqrt{\|[\vf(\bx)]_+\|^2+\|\vc(\bx)\|^2} \le \|J_\vf(\bx)^\top[\vf(\bx)]_++J_\vc(\bx)^\top\vc(\bx)\|$, i.e., \eqref{eq:mfcq} holds with $\nu$ depending on $\bx\in\mathrm{int}(\X)$. If $\nu_\bx$ can be lower bounded by some $\nu>0$ for any $\bx\in\mathrm{int}(\X)$, \eqref{eq:mfcq} will hold on $\mathrm{int}(\X)$. We make Assumption \ref{assume:nonconvexconstraintsingular} as a \emph{global} condition because we aim at establishing a \emph{global} (rather than \emph{local}) complexity result. In Appendix, we show that Assumption \ref{assume:nonconvexconstraintsingular} can hold for the application we test in the numerical experiment. 

	Under  Assumptions~\ref{assume:stochastic}, \ref{assume:nonconvexconstraintnew}, and \ref{assume:nonconvexconstraintsingular}, we are able to show that our iPPP method can find an $\varepsilon$-stationary point of  \eqref{eq:gco} in a complexity of $\tilde O(\frac{1}{\varepsilon^{3}})$. Similar to Theorem~\ref{thm:complexity-varying-beta-new}, we first show a convergence rate result, in terms of the number of outer iterations. 
	
	\begin{theorem}\label{thm:complexity-varying-beta-nc}
		Suppose that Assumptions~\ref{assume:stochastic}, \ref{assume:nonconvexconstraintnew} and \ref{assume:nonconvexconstraintsingular} hold and the parameters $\{\gamma_k\}$, $\{\beta_k\}$ and  $\{\hat{\varepsilon}_k\}$ in Algorithm~\ref{alg:iPPP} are taken as 
		\begin{equation}\label{eq:set-beta-vary-nc}
			\beta_k=\beta(k+1)^{\frac{1}{3}},\quad \gamma_k=2\Gamma_k, \quad\text{ and } \quad \textstyle \hat{\varepsilon}_k= \frac{1}{\beta (k+1)^{\frac{4}{3}} },
		\end{equation}  
		where $\beta>0$ is a constant, and $\Gamma_k$ is defined in \eqref{eq:def-gamma-k}. 
		If $\{R_k\}$ is generated by Option I, then  for any $K\ge1$, it holds
		\small
		\begin{align}\label{eq:convergeinK1-nc-02}
			\max\left\{\textbf{S}_{R_K},\textbf{F}_{R_K},\textbf{C}_{R_K}\right\}
			&\le & \textstyle \frac{1}{K}\left(\frac{4}{\beta }+\frac{4}{\nu \beta^2 }+\frac{9}{2\nu^2\beta^3 }+\frac{6\mathcal{C}_2(\rho_0/\beta+\rho_c)}{\nu^2}\right) + \frac{\sqrt{2\rho_0\mathcal{C}_2}}{\sqrt K}\left(1+\frac{1}{\nu\beta}\right)\\\nonumber
			&& \textstyle +\frac{1}{K^{1/3}}\left(\left(1+\frac{1}{\nu\beta}\right)\sqrt{2\beta \rho_c\mathcal{C}_2}+\frac{3B_{f_0}+3M}{2\nu \beta}+\frac{9(B_{f_0}+M)^2}{2\nu^2 \beta }\right)
		\end{align}
		\normalsize
		where $\{(\textbf{S}_k,\textbf{F}_k,\textbf{C}_k)\}_{k\ge 1}$ is defined in \eqref{eq:def-sfc}, $\rho_c$ is defined in \eqref{eq:def-gamma-k}, and~\footnote{Note that $\mathcal{C}_2$ is a constant independent of $K$.} 
		\small
		\begin{align}
			\label{eq:defconstantC-nc}
			\mathcal{C}_2:=&4\left[\textstyle 2B_{f_0}+2G+\frac{\beta}{2}\|\vc(\bar \bx^{(0)})\|^2+\frac{\beta}{2}\big\|[\vf(\bar \bx^{(0)})]_+\big\|^2+\frac{8}{3\nu^2\beta}(1/\beta+B_{f_0}+M)^2+\frac{4D}{\beta }\right] 	
			\\& \textstyle +\sum_{k=0}^{K'-1}\frac{8\gamma_k^2 D^2}{3\nu^2\beta (k+1)^{\frac{4}{3}}} \nonumber
		\end{align}
		\normalsize
		with
		\small
		\begin{align}
			\label{eq:defrangeofk}
			\textstyle	K':=\left\lceil\max\left\{\left(\frac{32\rho_0}{3\nu^2\beta}\right)^{\frac{3}{4}},\frac{32\rho_c}{3\nu^2}\right\}\right\rceil-1.
		\end{align}
		\normalsize
	\end{theorem}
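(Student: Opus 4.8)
The plan is to mirror the convex-constraint argument of Theorem~\ref{thm:complexity-varying-beta-new}. Since $\{R_k\}$ is produced by Option~I, the decomposition \eqref{eq:threeterms} again reduces everything to bounding the three running averages $\frac1K\sum_{k=0}^{K-1}\textbf{S}_{k+1}$, $\frac1K\sum_{k=0}^{K-1}\textbf{F}_{k+1}$ and $\frac1K\sum_{k=0}^{K-1}\textbf{C}_{k+1}$. First I would note that the choice $\gamma_k=2\Gamma_k$ in \eqref{eq:set-beta-vary-nc}, together with Lemma~\ref{thm:weakconvexity} and the definition \eqref{eq:def-gamma-k} of $\Gamma_k$, makes $\phi_k$ in \eqref{eq:compositepart} $\Gamma_k$-strongly convex and in particular convex, so that the foundational estimates \eqref{eq:nesterovconv1}--\eqref{eq:nesterovconv2-vary-beta-nc-constant} and the subgradient bound \eqref{eq:xtilde-x-dual-feas-option3} all carry over unchanged to the present setting.

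The decisive new step, replacing the dual-boundedness Lemma~\ref{thm:boundlambda}, is a feasibility bound drawn from Assumption~\ref{assume:nonconvexconstraintsingular}. Starting from \eqref{eq:grad-phik} and the stopping rule $\|\nabla\phi_k(\bar\bx^{(k+1)})+\bar\bxi^{(k+1)}\|\le\hat\varepsilon_k$, I would decompose $\bar\bxi^{(k+1)}=\bzt_1+\bzt_2$ with $\bzt_1\in\mathcal{N}_{\mathcal{X}}(\bar\bx^{(k+1)})$ and $\|\bzt_2\|\le M$ via Assumption~\ref{assume:stochastic}C, isolate $\beta_k\bigl(J_\vc^\top\vc+J_\vf^\top[\vf]_+\bigr)+\bzt_1$, and divide by $\beta_k$. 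As $\mathcal{N}_{\mathcal{X}}$ is a cone, $-\tfrac1{\beta_k}\bzt_1\in-\mathcal{N}_{\mathcal{X}}(\bar\bx^{(k+1)})$, so the resulting vector over-estimates $\mathrm{dist}(J_\vc^\top\vc+J_\vf^\top[\vf]_+,-\mathcal{N}_{\mathcal{X}})$; feeding this into \eqref{eq:mfcq} yields
\[
\textbf{F}_{k+1}\le\frac{1}{\nu\beta_k}\Bigl(B_{f_0}+M+\hat\varepsilon_k+\gamma_k\,\|\bar\bx^{(k+1)}-\bar\bx^{(k)}\|\Bigr),
\]
and hence, after $(a+b)^2\le 2a^2+2b^2$, the squared form $\textbf{F}_{k+1}^2\le\frac{2}{\nu^2\beta_k^2}\bigl[(B_{f_0}+M+\hat\varepsilon_k)^2+\gamma_k^2\|\bar\bx^{(k+1)}-\bar\bx^{(k)}\|^2\bigr]$.

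The hard part is that this estimate feeds the feasibility $\textbf{F}_{k+1}$ back into the proximal displacements $\|\bar\bx^{(k+1)}-\bar\bx^{(k)}\|$, while the right-hand side of \eqref{eq:nesterovconv2-vary-beta-nc-constant} already contains the telescoped feasibilities $\frac12\sum_{k=1}^{K-1}(\beta_k-\beta_{k-1})\textbf{F}_k^2$. The key is an absorption argument. Substituting the squared bound, the coefficient that multiplies $\|\bar\bx^{(k)}-\bar\bx^{(k-1)}\|^2$ equals, after inserting $\beta_k-\beta_{k-1}\le\frac{\beta}{3k^{2/3}}$ and $\gamma_{k-1}=2(\rho_0+\beta_{k-1}\rho_c)$, at most $\bigl(\frac{2\rho_0}{3\nu^2\beta k^{4/3}}+\frac{2\rho_c}{3\nu^2 k}\bigr)\gamma_{k-1}$. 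The threshold $K'$ in \eqref{eq:defrangeofk} is chosen precisely so that each of the two summands drops below $\tfrac1{16}$ once $k>K'$, making the coefficient at most $\tfrac18\gamma_{k-1}$ and therefore absorbable into the left-hand term $\frac{\gamma_{k-1}}2\|\bar\bx^{(k)}-\bar\bx^{(k-1)}\|^2$. For the finitely many early indices $k\le K'$ I would instead use the crude bound $\|\bar\bx^{(k)}-\bar\bx^{(k-1)}\|\le D$ from Assumption~\ref{assume:stochastic}B, which generates the residual sum $\sum_{k=0}^{K'-1}\frac{8\gamma_k^2D^2}{3\nu^2\beta(k+1)^{4/3}}$ in \eqref{eq:defconstantC-nc}; the remaining constant terms ($2B_{f_0}+2G$, the initial feasibility, the convergent series $\sum(\beta_k-\beta_{k-1})\beta_{k-1}^{-2}(B_{f_0}+M+\hat\varepsilon_{k-1})^2$, and $(\sum_k\hat\varepsilon_k)D\le\frac{4D}{\beta}$) assemble into the bracketed part of $\mathcal{C}_2$. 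The upshot is the uniform budget $\sum_{k=0}^{K-1}\frac{\gamma_k}{2}\|\bar\bx^{(k+1)}-\bar\bx^{(k)}\|^2\le\frac{\mathcal{C}_2}2$.

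With this budget the three averages follow by Cauchy--Schwarz and elementary series estimates. For $\frac1K\sum\textbf{S}_{k+1}$ I would use \eqref{eq:xtilde-x-dual-feas-option3}, then $\frac1K\sum\gamma_k\|\bar\bx^{(k+1)}-\bar\bx^{(k)}\|\le\frac1K\sqrt{\sum\gamma_k}\sqrt{2\mathcal{C}_2}$, and split $\sum_{k=0}^{K-1}\gamma_k=2\rho_0K+2\beta\rho_c\sum(k+1)^{1/3}$ through $\sqrt{a+b}\le\sqrt a+\sqrt b$ to produce the $\rho_0$-term of order $K^{-1/2}$ and the $\rho_c$-term of order $K^{-1/3}$. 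For $\frac1K\sum\textbf{F}_{k+1}$ I average the feasibility bound directly, using $\sum\beta_k^{-1}=O(K^{2/3})$ for the constant part (the source of the $\frac{3B_{f_0}+3M}{2\nu\beta}K^{-1/3}$ term) and the same weighted Cauchy--Schwarz for the displacement part (the source of the extra $\tfrac1{\nu\beta}$ factor multiplying the $\rho_0$- and $\rho_c$-contributions). Finally, for $\frac1K\sum\textbf{C}_{k+1}$ I exploit $\textbf{C}_{k+1}=\beta_k\|[\vf(\bar\bx^{(k+1)})]_+\|^2\le\beta_k\textbf{F}_{k+1}^2$ and the squared feasibility bound; its constant part, weighted by $\beta_k^{-1}$, again sums to an $O(K^{-1/3})$ term, while the displacement part $\frac{\gamma_k^2}{\beta_k}\|\bar\bx^{(k+1)}-\bar\bx^{(k)}\|^2=\frac{\gamma_k}{\beta_k}\cdot\gamma_k\|\bar\bx^{(k+1)}-\bar\bx^{(k)}\|^2$ with $\frac{\gamma_k}{\beta_k}\le\frac{2\rho_0}\beta+2\rho_c$ collapses to an $O(1/K)$ contribution. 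Collecting the three estimates by their decay rates $K^{-1}$, $K^{-1/2}$, $K^{-1/3}$ and substituting into \eqref{eq:threeterms} yields \eqref{eq:convergeinK1-nc-02}. I expect the third paragraph (the absorption and the attendant choice of $K'$) to be the genuine obstacle; the rest is careful bookkeeping of power-law sums.
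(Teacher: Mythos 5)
Your proposal follows the paper's own proof essentially step for step: the same feasibility bound derived from Assumption~\ref{assume:nonconvexconstraintsingular} via the normal-cone decomposition, the same absorption of the displacement terms with $K'$ chosen as the absorption threshold and the early indices bounded by $D$, and the same Cauchy--Schwarz and power-law-sum estimates for the three averages. The only deviations are harmless constant-factor slips in the sketch (e.g., the split of $\tfrac{2\gamma_{k-1}}{3\nu^2\beta k^{4/3}}$ yields coefficients $\tfrac{4\rho_0}{3\nu^2\beta k^{4/3}}+\tfrac{4\rho_c}{3\nu^2 k}$, each kept below $\tfrac18$ rather than $\tfrac1{16}$), which do not affect the argument.
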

	
	\begin{proof}
		Similar to the proof of Theorem~\ref{thm:complexity-varying-beta-new}, we go to bound the three summations on the right hand side of \eqref{eq:threeterms}.
		%
		%
		
		By the stopping condition of Algorithm~\ref{alg:adap-APG1}, there must exist $\bar\bxi^{(k+1)}\in\partial g(\bar\bx^{(k+1)})$ such that $\|\nabla \phi_k(\bar\bx^{(k+1)})+\bar\bxi^{(k+1)}\|\le \hat\varepsilon_k$.
		From Assumption~\ref{assume:stochastic}C, we have $\bar\bxi^{(k+1)}=\bar\bxi_1+\bar\bxi_2$ where $\bar\bxi_1\in\mathcal{N}_{\mathcal{X}}(\bar\bx^{(k+1)})$ and $\|\bar\bxi_2\|\leq M$. Hence, it follows from $\|\nabla \phi_k(\bar\bx^{(k+1)})+\bar\bxi^{(k+1)}\|\le \hat\varepsilon_k$ and \eqref{eq:grad-phik} that 
		\small
		\begin{align}
			\nonumber
			&~\left\|\bar\bxi_1+\beta_k J_\vc(\bar\bx^{(k+1)})^\top \vc(\bar\bx^{(k+1)})+\beta_k J_\vf(\bar\bx^{(k+1)})^\top [\vf(\bar\bx^{(k+1)})]_+\right\|\\\nonumber
			&~\leq\hat\varepsilon_k+\gamma_k \|\bar\bx^{(k+1)}-\bar\bx^{(k)}\|+\|\nabla f_0(\bar\bx^{(k+1)})\|+\|\bar\bxi_2\|\\\label{eq:scaledistancetonormalcone}
			&~\leq\hat\varepsilon_k+\gamma_k \|\bar\bx^{(k+1)}-\bar\bx^{(k)}\|+B_{f_0}+M,
		\end{align}
		\normalsize
		where we have used \eqref{eq:bd-fi} in the last inequality. 
		Now noting $\frac{\bar\bxi_1}{\beta_k}\in\mathcal{N}_{\mathcal{X}}(\bar\bx^{(k+1)})$, we have from \eqref{eq:scaledistancetonormalcone} and Assumption~\ref{assume:nonconvexconstraintnew} that
		\small
		\begin{align}\label{eq:boundinfeasibilitybycondition-nc}
			&\nu\sqrt{\|\vc(\bar\bx^{(k+1)})\|^2+\big\|[\vf(\bar \bx^{(k+1)})]_+\big\|^2}
			\leq \frac{\hat\varepsilon_k+\gamma_k \|\bar\bx^{(k+1)}-\bar\bx^{(k)}\|+B_{f_0}+M}{\beta_k}, \, \forall\, k\ge0.
		\end{align}
		\normalsize
		Since $\hat\varepsilon_k\le \frac{1}{\beta}$ for all $k$, \eqref{eq:boundinfeasibilitybycondition-nc} implies
		{\small\begin{align}\label{eq:thm3-ineq-temp}
				\|\vc(\bx^{(k)})\|^2+\big\|[\vf(\bar \bx^{(k)})]_+\big\|^2
				\le &~ \frac{1}{\nu^2\beta_{k-1}^2}(1/\beta+\gamma_{k-1} \|\bar\bx^{(k)}-\bar\bx^{(k-1)}\|+B_{f_0}+M)^2\cr
				\le & ~ \frac{2}{\nu^2\beta_{k-1}^2}\left[(1/\beta+B_{f_0}+M)^2+ \gamma_{k-1}^2\|\bar\bx^{(k)}-\bar\bx^{(k-1)}\|^2\right],
			\end{align}
		}for all $k\ge1$. Notice $(k+1)^{\frac{1}{3}}-k^{\frac{1}{3}}=\frac{1}{k^{\frac{2}{3}}+k^{\frac{1}{3}}(k+1)^{\frac{1}{3}}+(k+1)^{\frac{2}{3}}}\leq \frac{1}{3k^{\frac{2}{3}}}$. Hence, by the setting of $\{\beta_k\}$ in \eqref{eq:set-beta-vary-nc}, it holds $\frac{\beta_k-\beta_{k-1}}{\beta_{k-1}^2}\le \frac{1}{3\beta k^{\frac{4}{3}}}$. Therefore, multiplying $\beta_k-\beta_{k-1}$ to both sides of \eqref{eq:thm3-ineq-temp} and summing it over $k=1$ to $K-1$, we have
		\small
		\begin{align}\label{eq:nesterovconv2-vary-sumbeta}
			\nonumber
			&\textstyle \sum_{k=1}^{K-1}(\beta_k-\beta_{k-1})\left(\|\vc(\bx^{(k)})\|^2+\big\|[\vf(\bar \bx^{(k)})]_+\big\|^2\right) \nonumber \\
			\leq&~\textstyle \sum_{k=1}^{K-1}\frac{2}{3\nu^2\beta k^{\frac{4}{3}}}\left[(1/\beta+B_{f_0}+M)^2+\gamma_{k-1}^2\|\bar\bx^{(k)}-\bar\bx^{(k-1)}\|^2\right]\\
			\leq&~\textstyle \frac{8}{3\nu^2\beta}(1/\beta+B_{f_0}+M)^2+\sum_{k=1}^{K-1}\frac{2\gamma_{k-1}^2 \|\bar\bx^{(k)}-\bar\bx^{(k-1)}\|^2}{3\nu^2\beta k^{\frac{4}{3}}},
		\end{align} 
		\normalsize
		where 
		we have used $\sum_{k=1}^{K-1}k^{-\frac{4}{3}}\leq 1+\int_{1}^{K-1}x^{-\frac{4}{3}}dx\leq 4$ in the last inequality.
		In addition, it follows from \eqref{eq:set-beta-vary-nc} that
		\small
		\begin{align}
			\label{eq:nesterovconv2-vary-sumepsilonhat-nc}
			\textstyle	\sum_{k=0}^{K-1}\hat{\varepsilon}_k=\frac{1}{\beta }\sum_{k=0}^{K-1}(k+1)^{-\frac{4}{3}} \leq \frac{1}{\beta}\left(1+\int_{1}^{K}x^{-\frac{4}{3}}dx\right)\leq\frac{4}{\beta }.
		\end{align}
		\normalsize
		Since $\gamma_k > \Gamma_k$, $\phi_k$ defined in \eqref{eq:compositepart} is convex, and thus \eqref{eq:nesterovconv2-vary-beta-nc-constant} holds.	
		Adding \eqref{eq:nesterovconv2-vary-sumbeta} and \eqref{eq:nesterovconv2-vary-sumepsilonhat-nc} to \eqref{eq:nesterovconv2-vary-beta-nc-constant}, we obtain
		\small
		\begin{align}\label{eq:nesterovconv2-vary-beta-new-nc}
			&~\textstyle \sum_{k=0}^{K-2}\left(\frac{\gamma_k}{2}-\frac{2\gamma_k^2}{3\nu^2\beta (k+1)^{\frac{4}{3}}}\right)\|\bar \bx^{(k+1)}-\bar\bx^{(k)}\|^2+\frac{\gamma_{K-1}}{2}\|\bar \bx^{(K)}-\bar\bx^{(K-1)}\|^2\\\nonumber
			\le &~2B_{f_0}+2G+\frac{\beta_0}{2}\left(\|\vc(\bx^{(0)})\|^2+\big\|[\vf(\bar \bx^{(0)})]_+\big\|^2\right)+\frac{8}{3\nu^2\beta}(1/\beta+B_{f_0}+M)^2+\frac{4D}{\beta }.
		\end{align}
		\normalsize

		By $\gamma_k=2\Gamma_k$ with $\Gamma_k$ defined in \eqref{eq:def-gamma-k}, we have
		\small
		\begin{align*}
			\textstyle	\frac{2\gamma_k}{3\nu^2\beta (k+1)^{\frac{4}{3}}}=\frac{4\rho_0}{3\nu^2\beta (k+1)^{\frac{4}{3}}}+\frac{4\rho_c}{3\nu^2 (k+1)}.
		\end{align*}
		\normalsize
		When $k\geq K'$  with $K'$ defined in \eqref{eq:defrangeofk}, it holds that $\frac{4\rho_0}{3\nu^2\beta (k+1)^{\frac{4}{3}}}\leq\frac{1}{8}$ and $\frac{4\rho_c}{3\nu^2 (k+1)}\leq\frac{1}{8}$, 
		and thus
		\small
		\begin{align}
			\label{eq:gammaksmall}
			\textstyle	\frac{2\gamma_k}{3\nu^2\beta (k+1)^{\frac{4}{3}}}\leq \frac{1}{4}, \, \forall\, k\ge K'.
		\end{align}
		\normalsize
		Applying \eqref{eq:gammaksmall} for $K' \le k\le K-2$ in \eqref{eq:nesterovconv2-vary-beta-new-nc} and also noting $\frac{\gamma_k}{2}\geq \frac{\gamma_k}{4}$, we obtain
		\small
		\begin{align}
			\nonumber
			&\sum_{k=0}^{K-1}\frac{\gamma_k}{4}\|\bar \bx^{(k+1)}-\bar\bx^{(k)}\|^2\\\nonumber
			\le&~ 2B_{f_0}+2G+\frac{\beta_0}{2}\left(\|\vc(\bx^{(0)})\|^2+\big\|[\vf(\bar \bx^{(0)})]_+\big\|^2\right)\\\label{eq:nesterovconv2-vary-beta-new1-nc}
			&~+\frac{8}{3\nu^2\beta}(1/\beta+B_{f_0}+M)^2+\frac{4D}{\beta }+\sum_{k=0}^{K'-1}\frac{2\gamma_k^2}{3\nu^2\beta (k+1)^{\frac{4}{3}}}\|\bar \bx^{(k+1)}-\bar\bx^{(k)}\|^2\leq\frac{\mathcal{C}_2}{4},
		\end{align}	
		\normalsize
		where we have used the definition of $\mathcal{C}_2$ in \eqref{eq:defconstantC-nc} and Assumption~\ref{assume:stochastic}B.
		
		Using \eqref{eq:set-beta-vary-nc} and recalling $\Gamma_k$ in \eqref{eq:def-gamma-k}, we have $\sum_{k=0}^{K-1}\gamma_k=2\rho_0K+2\rho_c \sum_{k=0}^{K-1}\beta_k$, and	in addition, 
		$\sum_{k=0}^{K-1}\beta_k=\beta\sum_{k=0}^{K-1}(k+1)^{\frac{1}{3}}
		\leq {\beta}K^{\frac{4}{3}}.$ Therefore, by \eqref{eq:ineq5-varybeta-new-l1} and \eqref{eq:nesterovconv2-vary-beta-new1-nc}, it holds
		\small
		\begin{equation}\label{eq:ineq5-varybeta-new-nc}
			\textstyle	\frac{1}{K}\sum_{k=0}^{K-1}\gamma_k\|\bar \bx^{(k+1)}-\bar\bx^{(k)}\|
			\leq\frac{\sqrt{\mathcal{C}_2}}{K}\sqrt{2\rho_0K + 2\rho_c\beta K^{\frac{4}{3}}}
			\leq \textstyle \sqrt{\frac{2\rho_0\mathcal{C}_2}{K}}+\frac{\sqrt{2\rho_c\beta\mathcal{C}_2}}{K^{1/3}}.
		\end{equation}
		\normalsize	
		%
		Now apply \eqref{eq:nesterovconv2-vary-sumepsilonhat-nc} and \eqref{eq:ineq5-varybeta-new-nc} to \eqref{eq:xtilde-x-dual-feas-option3} to have
		\small
		\begin{eqnarray}\label{eq:xtilde-x-dual-feas-option3-avg-nc}
			\textstyle \frac{1}{K}\sum_{k=0}^{K-1}\textbf{S}_{k+1}
			\label{eq:term1avgk-nc}
			&\leq& \textstyle \frac{4}{\beta K}+\sqrt{\frac{2\rho_0\mathcal{C}_2}{K}}+\frac{\sqrt{2\rho_c\beta\mathcal{C}_2}}{K^{1/3}}.
		\end{eqnarray}
		\normalsize
		
		From the definition of $\textbf{F}_{k+1}$ in \eqref{eq:def-F}, we use \eqref{eq:boundinfeasibilitybycondition-nc} to have 
		\small
		\begin{align}
			\nonumber
			\textstyle	\frac{1}{K}\sum_{k=0}^{K-1}\textbf{F}_{k+1}
			\leq&~ \textstyle \frac{1}{K}\sum_{k=0}^{K-1}\frac{\hat\varepsilon_k+B_{f_0}+\gamma_k \|\bar\bx^{(k+1)}-\bar\bx^{(k)}\|+M}{\nu\beta_k}\\\nonumber
			\leq&~ \textstyle \frac{1}{\nu \beta K}\sum_{k=0}^{K-1}\hat\varepsilon_k+\frac{1}{\nu \beta K}\sum_{k=0}^{K-1}\gamma_k\|\bar \bx^{(k+1)}-\bar\bx^{(k)}\|+\frac{B_{f_0}+M}{\nu K}\sum_{k=0}^{K-1}\frac{1}{\beta_k}\\\label{eq:xtilde-x-stationary-adapbeta-option3-avg-nc}
			\leq&~ \textstyle \frac{4}{\nu \beta^2 K}+\frac{\sqrt{2\rho_0\mathcal{C}_2}}{\nu \beta \sqrt K}+\frac{\sqrt{2\rho_c\beta\mathcal{C}_2}}{\nu\beta K^{1/3}}+\frac{3B_{f_0}+3M}{2\nu \beta K^{1/3}},
		\end{align}
		\normalsize
		where the second inequality follows from $\beta_k\ge \beta$, and the last inequality holds because of \eqref{eq:nesterovconv2-vary-sumepsilonhat-nc},  \eqref{eq:ineq5-varybeta-new-nc}, and the fact that 
		\small
		\begin{equation}\label{eq:bd-beta-inverse}
			\textstyle \frac{1}{ K}\sum_{k=0}^{K-1}\frac{1}{\beta_k}=\frac{1}{\beta K}\sum_{k=0}^{K-1}\frac{1}{(k+1)^{1/3}}\leq \frac{1}{\beta K}\int_{0}^{K}x^{-\frac{1}{3}}dx\leq\frac{3}{2\beta K^{1/3}}.
		\end{equation} 
		\normalsize
		
		By \eqref{eq:boundinfeasibilitybycondition-nc} and the definition of $\lambda^{(k+1)}_i$ in Algorithm~\ref{alg:iPPP}, it holds
		\small
		\begin{eqnarray*}
			\sum_{i=1}^m|\bar\lambda^{(k+1)}_i f_i(\bar\bx^{(k+1)})|=\beta_k\big\|[\vf(\bar\bx^{(k+1)})]_+\big\|^2
			\leq\frac{(\hat\varepsilon_k+B_{f_0}+\gamma_k \|\bar\bx^{(k+1)}-\bar\bx^{(k)}\|+M)^2}{\nu^2\beta_k}.
		\end{eqnarray*}
		\normalsize 
		From the definition of $\textbf{C}_{k+1}$ in \eqref{eq:def-C}, we 
		average both sides of the above inequality to have 
		\small
		\begin{align}
			\nonumber
			\textstyle		\frac{1}{K}\sum_{k=0}^{K-1}\textbf{C}_{k+1}
			\leq&~ \textstyle \frac{1}{K}\sum_{k=0}^{K-1}\frac{(\hat\varepsilon_k+B_{f_0}+\gamma_k \|\bar\bx^{(k+1)}-\bar\bx^{(k)}\|+M)^2}{\nu^2\beta_k}\\\nonumber
			\leq&~ \textstyle \frac{3}{\nu^2K}\sum_{k=0}^{K-1}\frac{\hat\varepsilon_k^2}{\beta_k}+\frac{3}{K}\sum_{k=0}^{K-1}\frac{(B_{f_0}+M)^2}{\nu^2\beta_k}+\frac{3}{K}\sum_{k=0}^{K-1}\frac{\gamma_k^2 \|\bar\bx^{(k+1)}-\bar\bx^{(k)}\|^2}{\nu^2\beta_k}\\\label{eq:xtilde-x-slack-adapbeta-option3-avg-nc-temp1}
			\leq&~\textstyle \frac{9}{2\nu^2\beta^3 K}+\frac{9(B_{f_0}+M)^2}{2\nu^2 \beta K^{1/3}}+\frac{3}{K}\sum_{k=0}^{K-1}\frac{\gamma_k^2 \|\bar\bx^{(k+1)}-\bar\bx^{(k)}\|^2}{\nu^2\beta_k},
		\end{align}
		\normalsize
		where the second inequality uses $(a+b+c)^2\leq 3a^2+3b^2+3c^2$, and the third inequality follows from \eqref{eq:bd-beta-inverse} 
		and 
		\small	
		$$\textstyle \sum_{k=0}^{K-1}\frac{\hat\varepsilon_k^2}{\beta_k}=\frac{1}{\beta^3}\sum_{k=0}^{K-1}\frac{1}{(k+1)^3}\leq \frac{1}{\beta^3}\left(1+\int_1^{K}\frac{1}{x^3}dx\right)\leq \frac{3}{2\beta^3}.$$ 
		\normalsize	
		Recall $\gamma_k=2\rho_0+2\beta_k\rho_c$ for all $k\ge0$ and also note $\beta_k\ge \beta$. We have $\frac{\gamma_k^2}{\beta_k} \le \gamma_k(\frac{2\rho_0}{\beta}+2\rho_c)$, and thus by \eqref{eq:nesterovconv2-vary-beta-new1-nc}, it holds
		\small
		\begin{align}\label{eq:xtilde-x-slack-adapbeta-option3-avg-nc-temp2}
			&~\textstyle \frac{3}{K}\sum_{k=0}^{K-1}\frac{\gamma_k^2 \|\bar\bx^{(k+1)}-\bar\bx^{(k)}\|^2}{\nu^2\beta_k}
			\leq\frac{6\mathcal{C}_2(\rho_0/\beta+\rho_c)}{\nu^2K},
		\end{align}
		\normalsize
		Now apply \eqref{eq:xtilde-x-slack-adapbeta-option3-avg-nc-temp2} to \eqref{eq:xtilde-x-slack-adapbeta-option3-avg-nc-temp1} to have
		\small
		\begin{align}\label{eq:xtilde-x-slack-adapbeta-option3-avg-nc}
			\textstyle \frac{1}{K}\sum_{k=0}^{K-1}\textbf{C}_{k+1}
			\leq\frac{9}{2\nu^2\beta^3 K}+\frac{9(B_{f_0}+M)^2}{2\nu^2 \beta K^{1/3}}+\frac{6\mathcal{C}_2(\rho_0/\beta+\rho_c)}{\nu^2K}.
		\end{align}
		\normalsize
		Plugging \eqref{eq:term1avgk-nc}, \eqref{eq:xtilde-x-stationary-adapbeta-option3-avg-nc}, and \eqref{eq:xtilde-x-slack-adapbeta-option3-avg-nc} into \eqref{eq:xtilde-x-dual-feas-option3} gives \eqref{eq:convergeinK1-nc-02}.
	\end{proof}

	\begin{corollary}[complexity result]
		Under the same assumptions of Theorem~\ref{thm:complexity-varying-beta-nc}, let  
		$K=\left\lceil\max\big\{ K_3 , K_4, K_5 \big\} \right\rceil = O(1/\varepsilon^3)$ with
		\small
		\begin{align*}
			\textstyle	K_3=\frac{3}{\varepsilon}\left(\frac{4}{\beta }+\frac{4}{\nu \beta^2 }+\frac{9}{2\nu^2\beta^3 }+\frac{6\mathcal{C}_2(\rho_0/\beta+\rho_c)}{\nu^2}\right),\ K_4=\frac{18\rho_0\mathcal{C}_2}{\varepsilon^2}\left(1+\frac{1}{\nu\beta}\right)^2,\\ 
			\textstyle	K_5=\frac{27}{\varepsilon^3}\left(\left(1+\frac{1}{\nu\beta}\right)\sqrt{2\beta \rho_c\mathcal{C}_2}+\frac{3B_{f_0}+3M}{2\nu \beta}+\frac{9(B_{f_0}+M)^2}{2\nu^2 \beta }\right)^3,
		\end{align*}
		\normalsize
		where $\mathcal{C}_2$ is defined as in \eqref{eq:defconstantC-nc}. Then 
		$\bar\bx^{(R_K)}$ is an $\varepsilon$-stationary point of \eqref{eq:gco}, and 
		the total complexity to produce $\bar\bx^{(R_K)}$ is $\tilde O\left(1/\varepsilon^3\right)$.
	\end{corollary}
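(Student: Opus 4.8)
The plan is to mirror the two-step argument used for the convex-constraint corollary: first exploit the choice of $K$ to push the right-hand side of \eqref{eq:convergeinK1-nc-02} below $\varepsilon$, and then convert the resulting $O(1/\varepsilon^3)$ bound on the number of outer iterations into a count of proximal gradient steps via Theorem~\ref{thm:conv-adap-APG1}.

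First I would note that the bound \eqref{eq:convergeinK1-nc-02} is a sum of three nonnegative terms decaying like $1/K$, $1/\sqrt{K}$, and $1/K^{1/3}$; crucially, $\mathcal{C}_2$ from \eqref{eq:defconstantC-nc} is a constant independent of $K$, since its only summation terminates at the fixed index $K'$ of \eqref{eq:defrangeofk}. The thresholds $K_3$, $K_4$, $K_5$ are calibrated so that $K\ge K_3$ makes the $1/K$ term at most $\varepsilon/3$, $K\ge K_4$ makes the $1/\sqrt{K}$ term at most $\varepsilon/3$, and $K\ge K_5$ makes the $1/K^{1/3}$ term at most $\varepsilon/3$ (the factors $3$, $18$, $27$ and the exponents on the bracketed constants arise precisely from isolating $K$ in each inequality). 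Taking $K=\lceil\max\{K_3,K_4,K_5\}\rceil$ then gives $\max\{\textbf{S}_{R_K},\textbf{F}_{R_K},\textbf{C}_{R_K}\}\le\varepsilon$, and reading off the definitions in \eqref{eq:def-sfc} shows that \eqref{eq:eKKT-df}, \eqref{eq:eKKT-pf}, and \eqref{eq:ecomplementaryslackness} all hold at $\bar\bx^{(R_K)}$. The sign and support conditions of Definition~\ref{def:eps-stationary-pt} hold by construction, since $\bar\blambda^{(R_K)}=\beta_{R_K-1}[\vf(\bar\bx^{(R_K)})]_+\ge\mathbf{0}$ and each entry vanishes wherever $f_i(\bar\bx^{(R_K)})\le0$; hence $\bar\bx^{(R_K)}$ is an $\varepsilon$-stationary point, and inspection of $K_3,K_4,K_5$ gives $K=O(1/\varepsilon^3)$.

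For the complexity, let $T_k$ denote the number of proximal gradient steps taken by Algorithm~\ref{alg:adap-APG1} in its $k$-th call. By Theorem~\ref{thm:conv-adap-APG1} and the discussion following it, $T_k=\tilde O(\sqrt{\kappa_{\phi_k}})$, where $\kappa_{\phi_k}=L_{\phi_k}/\mu_{\phi_k}$ and the logarithmic dependence on $\|\bp^{\text{ini}}\|$ and $1/\hat\varepsilon_k$ is absorbed into $\tilde O$. Because $\gamma_k=2\Gamma_k$, the strong-convexity modulus of $\phi_k$ is $\mu_{\phi_k}=\gamma_k-\Gamma_k=\Gamma_k=\rho_0+\beta_k\rho_c$, while \eqref{eq:Lk-case2} gives $L_{\phi_k}=L_{f_0}+2\rho_0+\beta_k(2\rho_c+C)$ with $C=\sum_{i=1}^m B_{f_i}(B_{f_i}+L_{f_i})+\sum_{j=1}^n B_{c_j}(B_{c_j}+L_{c_j})$. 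The key step is to observe that both $L_{\phi_k}$ and $\mu_{\phi_k}$ grow linearly in $\beta_k$, so that $\kappa_{\phi_k}=\frac{L_{f_0}+2\rho_0+\beta_k(2\rho_c+C)}{\rho_0+\beta_k\rho_c}$ is bounded by a constant uniformly in $k$. Therefore $T_k=\tilde O(1)$, and summing over the outer loop yields $T_{\text{total}}=\sum_{k=0}^{K-1}T_k=\tilde O(K)=\tilde O(1/\varepsilon^3)$.

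I expect the uniform boundedness of $\kappa_{\phi_k}$ to be the one point requiring genuine care, and it is exactly what distinguishes this result from the convex-constraint case. There $\gamma_k$ was held fixed while $\beta_k\to\infty$, so $\kappa_{\phi_k}$ grew like $\beta_k$ and each subproblem became costlier; here the scaling $\gamma_k=2\Gamma_k\propto\beta_k$ keeps the subproblem condition number $O(1)$, so that although more outer iterations are needed (the slowest term in \eqref{eq:convergeinK1-nc-02} decays only like $K^{-1/3}$, forcing $K=O(1/\varepsilon^3)$ rather than $O(1/\varepsilon^2)$), each is solved in $\tilde O(1)$ proximal gradient steps. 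One should verify along the way that $\Gamma_k>0$ throughout, so that AdapAPG is always applied to a strongly convex instance of \eqref{eq:compositeopt}, and that the proportional growth of $L_{\phi_k}$ and $\mu_{\phi_k}$ indeed holds, i.e.\ that the problem is genuinely nonconvex in its constraints ($\rho_c>0$); under the regime of this section this is the case, and it is the feature that yields the $\tilde O(1/\varepsilon^3)$ rate.
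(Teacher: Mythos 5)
Your proposal is correct and follows essentially the same route as the paper: choose $K$ so that each of the three decaying terms in \eqref{eq:convergeinK1-nc-02} is at most $\varepsilon/3$, then use Theorem~\ref{thm:conv-adap-APG1} with $\mu_{\phi_k}=\gamma_k-\Gamma_k=\Gamma_k$ and $L_{\phi_k}=\Theta(\beta_k)$ to get $T_k=\tilde O(1)$ and total cost $\tilde O(K)=\tilde O(1/\varepsilon^3)$. Your explicit remark that the uniform bound on $\kappa_{\phi_k}$ relies on $\rho_c>0$ (so that $\Gamma_k=\Theta(\beta_k)$) is a caveat the paper leaves implicit, but it does not change the argument.
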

	
	\begin{proof}
		With the given $K$, the right hand side of  \eqref{eq:convergeinK1-nc-02} is upper bounded by $\varepsilon$, so  $\bar\bx^{(R_K)}$ is an $\varepsilon$-stationary point of \eqref{eq:gco}. 
		
		Let $T_k$ be the number of proximal gradient steps performed by Algorithm~\ref{alg:adap-APG1} in the $k$-call by Algorithm~\ref{alg:iPPP}.
		Then	 according to Theorem~\ref{thm:conv-adap-APG1} and the definitions of $\Gamma_k$, $\gamma_k$,  $\beta_k$, and $L_{\phi_k}$ in \eqref{eq:Lk-case2}, \eqref{eq:def-gamma-k} and \eqref{eq:set-beta-vary-nc}, we have 
		$
		\textstyle	T_k=\tilde O\left(\sqrt{\frac{L_{\phi_k}}{\gamma_k-\Gamma_k}}\right) =\tilde O\left(\sqrt{\frac{\Gamma_k+\beta_k}{\Gamma_k}}\right)=\tilde O\left(1\right),
		$ 
		for $k=0,1,\dots,K-1$.
		Therefore, the total complexity is
		$\textstyle T_{\mathrm{total}} = \sum_{k=0}^{K-1} T_k = \tilde O\left(K\right) = \tilde O(1/\varepsilon^3),$
		which completes the proof.
	\end{proof}

	\subsection{The complexity of the iPPP method under initial feasibility assumption}
	In this subsection, we drop Assumption~\ref{assume:nonconvexconstraintsingular} and analyze the complexity of the proposed iPPP method by starting from an initial feasible point, namely, in addition to Assumptions~\ref{assume:stochastic} and \ref{assume:nonconvexconstraintnew}, we assume the follows.  
	\begin{assumption}
		\label{assume:nonconvexconstraintfeasible}
		The initial solution $\bar\bx^{(0)}\in \X$ in Algorithm~\ref{alg:iPPP} is feasible, i.e., $f_i(\bar\bx^{(0)})\le 0$ for each $i = 1,\ldots,m$ and $c_j(\bar\bx^{(0)}) = 0$ for each $j=1,\ldots,n$. 
	\end{assumption}
	
	\begin{remark}
		This feasibility assumption on $\bar\bx^{(0)}$ can be weakened to near-feasibility depending on the required accuracy. Unless with certain regularity conditions like the one we assumed in the previous subsection, or with certain special structures, it is generally impossible to find a (near) feasible solution of a nonlinear system in a polynomial time. 
		Existing works, such as \cite{cartis2011evaluation,boob2019proximal, ma2019proximally}, also need the (near)-feasibility assumption to guarantee a near-stationary point. 
	\end{remark}

	
	Below, we specify the parameters of  Algorithm~\ref{alg:iPPP} and analyze its complexity with Option II to find a weak $\varepsilon$-stationary point of \eqref{eq:gco}. 
	\begin{theorem}\label{thm:complexity-constant-beta-new}
		Suppose that Assumptions~\ref{assume:stochastic},  \ref{assume:nonconvexconstraintnew}, and \ref{assume:nonconvexconstraintfeasible} hold and the parameters $\{\gamma_k\}$, $\{\beta_k\}$ and  $\{\hat{\varepsilon}_k\}$ in Algorithm~\ref{alg:iPPP} are taken as 
		\begin{equation}\label{eq:set-beta-constant}
			\beta_k=\beta,\quad \gamma_k=2(\rho_0+\beta\rho_c), \quad\text{ and } \quad \textstyle \hat{\varepsilon}_k= \frac{1}{ (k+1)^2 },\, \forall\, k\ge 0,
		\end{equation}  
		where $\beta>0$ is a constant, and $\rho_c$ is defined in \eqref{eq:def-gamma-k}. 
		If $\{R_k\}$ is generated by Option II, then for any $K\ge1$, it holds that 
		\small
		\begin{eqnarray}
			\max\left\{\textbf{S}_{R_K},\, \textbf{F}_{R_K}\right\}
			&\leq&\textstyle \frac{\pi^2}{6K}+ \frac{\sqrt{\mathcal{C}_3}}{\sqrt{K}}+\sqrt{\frac{4(B_{f_0}+G) + \pi^2D/3}{\beta}}\label{eq:convergeinK1-nc},
		\end{eqnarray}
		\normalsize
		where 
		\small
		\begin{eqnarray}
			\label{eq:defconstantC-nc-const}
			\textstyle	\mathcal{C}_3:=\left(2\rho_0 + 2\beta\rho_c\right)\left(4(B_{f_0}+G) + \frac{\pi^2D}{3}\right).
		\end{eqnarray}
		\normalsize
	\end{theorem}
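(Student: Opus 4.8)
The plan is to follow the same two-step template as in the proofs of Theorems~\ref{thm:complexity-varying-beta-new} and~\ref{thm:complexity-varying-beta-nc}: bound the two running averages on the right of \eqref{eq:twoterms} (Option~II keeps only $\textbf{S}$ and $\textbf{F}$), now exploiting the two simplifications granted by the constant penalty and the feasible start. First I would observe that with $\beta_k\equiv\beta$ the weak-convexity modulus in \eqref{eq:def-gamma-k} is constant, $\Gamma_k=\rho_0+\beta\rho_c$, so the choice $\gamma_k=2(\rho_0+\beta\rho_c)=2\Gamma_k>\Gamma_k$ makes $\phi_k$ convex and the descent estimate \eqref{eq:nesterovconv2-vary-beta-nc-constant} applies. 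In that estimate the initial-constraint term $\frac{\beta_0}{2}(\|\vc(\bar\bx^{(0)})\|^2+\|[\vf(\bar\bx^{(0)})]_+\|^2)$ vanishes by Assumption~\ref{assume:nonconvexconstraintfeasible}, and the telescoping sum $\sum_{k=1}^{K-1}(\beta_k-\beta_{k-1})(\cdots)$ vanishes since $\beta_k\equiv\beta$; together with the Basel bound $\sum_{k=0}^{K-1}\hat\varepsilon_k=\sum_{j=1}^{K}j^{-2}<\pi^2/6$ coming from \eqref{eq:set-beta-constant}, this collapses \eqref{eq:nesterovconv2-vary-beta-nc-constant} to
\[
\sum_{k=0}^{K-1}\tfrac{\gamma_k}{2}\|\bar\bx^{(k+1)}-\bar\bx^{(k)}\|^2+\tfrac{\beta}{2}\big(\|\vc(\bar\bx^{(K)})\|^2+\big\|[\vf(\bar\bx^{(K)})]_+\big\|^2\big)\le 2B_{f_0}+2G+\tfrac{\pi^2D}{6},
\]
valid for every $K\ge1$.

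Two consequences follow by discarding one nonnegative summand at a time. Keeping only the movement sum gives $\sum_{k=0}^{K-1}\gamma_k\|\bar\bx^{(k+1)}-\bar\bx^{(k)}\|^2\le 4(B_{f_0}+G)+\pi^2D/3$. Keeping only the final-iterate penalty term, and using that the displayed bound holds for \emph{every} $K$, yields the \emph{uniform} feasibility estimate $\|\vc(\bar\bx^{(k)})\|^2+\|[\vf(\bar\bx^{(k)})]_+\|^2\le\big(4(B_{f_0}+G)+\pi^2D/3\big)/\beta$ for each $k\ge1$. By the definition of $\textbf{F}_{k+1}$ in \eqref{eq:def-F}, this bounds every $\textbf{F}_{k+1}$, and hence the average $\frac1K\sum_{k=0}^{K-1}\textbf{F}_{k+1}$, by the $K$-independent constant $\sqrt{(4(B_{f_0}+G)+\pi^2D/3)/\beta}$, which is precisely the last term of \eqref{eq:convergeinK1-nc}.

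For the dual-residual average I would start from \eqref{eq:xtilde-x-dual-feas-option3}. Its first piece is $\frac1K\sum_{k=0}^{K-1}\hat\varepsilon_k<\pi^2/(6K)$, the leading term of \eqref{eq:convergeinK1-nc}. Its second piece I would control by the Cauchy--Schwarz step \eqref{eq:ineq5-varybeta-new-l1}: since $\gamma_k$ is constant, $\sum_{k=0}^{K-1}\gamma_k=2(\rho_0+\beta\rho_c)K$, and pairing this with the movement bound from the previous paragraph gives $\frac1K\sum_{k=0}^{K-1}\gamma_k\|\bar\bx^{(k+1)}-\bar\bx^{(k)}\|\le\sqrt{\mathcal{C}_3/K}$, with $\mathcal{C}_3$ exactly as in \eqref{eq:defconstantC-nc-const}. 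Substituting both averages into \eqref{eq:twoterms} then yields \eqref{eq:convergeinK1-nc}.

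The step demanding the most care is the uniform feasibility estimate. Unlike the convex-constraint and non-singular cases, here $\beta$ is held fixed, so the constraint violation does not decay in $K$ and is controlled only through $1/\beta$; the claim that \emph{each} iterate (not merely the running average) obeys the bound relies on retaining the last-iterate penalty term $\frac{\beta}{2}\textbf{F}_K^2$ on the left of the collapsed inequality and invoking it for arbitrary $K$. This irreducible, $K$-independent residual is exactly what will later force $\beta=\Theta(\varepsilon^{-2})$ to drive $\textbf{F}_{R_K}\le\varepsilon$, and---because Option~II ignores the complementary-slackness quantity $\textbf{C}$---it explains both why the guarantee is only a \emph{weak} $\varepsilon$-stationary point and why the resulting complexity degrades to $\tilde O(\varepsilon^{-4})$.
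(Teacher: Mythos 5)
Your proposal is correct and follows essentially the same route as the paper's proof: collapse \eqref{eq:nesterovconv2-vary-beta-nc-constant} using the constant $\beta$, the feasible start, and $\sum_k\hat\varepsilon_k\le\pi^2/6$; extract the uniform per-iterate feasibility bound by noting the collapsed inequality holds for every $K$; and bound the $\textbf{S}$-average via \eqref{eq:xtilde-x-dual-feas-option3} and the Cauchy--Schwarz step \eqref{eq:ineq5-varybeta-new-l1} before combining through \eqref{eq:twoterms}. Your closing remarks on why the residual forces $\beta=\Theta(\varepsilon^{-2})$ and only a weak stationary point are accurate but belong to Corollary~\ref{thm:eps-stationary-const-beta} rather than to this theorem's proof.
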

	
	\begin{proof}
		%
		%
		By the setting $\beta_k=\beta, \,\forall\, k\ge 0$ and $\sum_{k=0}^{K-1}\hat{\varepsilon}_k=\sum_{k=0}^{K-1}\frac{1}{(k+1)^2}\leq\frac{\pi^2}{6}$, we obtain from \eqref{eq:nesterovconv2-vary-beta-nc-constant} and also the feasibility of $\bar\bx^{(0)}$ that 
		\small
		\begin{align}
			\textstyle	\sum_{k=0}^{K-1}\frac{\gamma_k}{2}\|\bar\bx^{(k+1)}-\bar\bx^{(k)}\|^2+\frac{\beta}{2}\big\|\vc(\bar\bx^{(K)})\big\|^2+\frac{\beta}{2}\big\|[\vf(\bar\bx^{(K)})]_+\big\|^2 
			\le  2(B_{f_0}+G) + \frac{\pi^2D}{6}.\label{eq:nonconvexineq1-const-beta}
		\end{align}
		\normalsize
		Hence, from \eqref{eq:ineq5-varybeta-new-l1} and \eqref{eq:nonconvexineq1-const-beta} and also the setting of $\gamma_k$ in \eqref{eq:set-beta-constant}, we have
		\small
		\begin{align}\label{eq:ineq5-constbeta-new-nc}
			&\textstyle \frac{1}{K}\sum_{k=0}^{K-1}\gamma_k\|\bar \bx^{(k+1)}-\bar\bx^{(k)}\|
			\leq\frac{1}{K}\sqrt{4(B_{f_0}+G) + \frac{\pi^2D}{3}}\sqrt{2\rho_0K + 2K\beta\rho_c}=\frac{\sqrt{\mathcal{C}_3}}{\sqrt{K}}.
		\end{align}
		\normalsize	
		Applying \eqref{eq:ineq5-constbeta-new-nc} and the fact that $\sum_{k=0}^{K-1}\hat{\varepsilon}_k\leq\frac{\pi^2}{6}$ to \eqref{eq:xtilde-x-dual-feas-option3} leads to
		\small
		\begin{equation}
			\textstyle	\frac{1}{K}\sum_{k=0}^{K-1}\textbf{S}_{k+1}
			\label{eq:term1avgk-nc-const}
			\leq\frac{\pi^2}{6K}+\frac{\sqrt{\mathcal{C}_3}}{\sqrt{K}}.
		\end{equation}
		\normalsize
		
		In addition, notice that \eqref{eq:nonconvexineq1-const-beta} actually holds for any $K\geq1$. Hence, 
		\small
		\begin{eqnarray*}
			\label{eq:feasibility-t-nc}
			\textstyle	\frac{\beta}{2}\big\|\vc(\bar\bx^{(k+1)})\big\|^2+\frac{\beta}{2}\big\|[\vf(\bar\bx^{(k+1)})]_+\big\|^2
			\leq2(B_{f_0}+G) + \frac{\pi^2D}{6},\,\forall\, k\ge 0,
		\end{eqnarray*}
		\normalsize	
		which, together with the definition of $\textbf{F}_{k+1}$ in \eqref{eq:def-F}, implies
		\small
		\begin{eqnarray}
			\label{eq:feasibility-t-nc-bound}
			\textstyle	\frac{1}{K}\sum_{k=0}^{K-1}\textbf{F}_{k+1} \le \sqrt{\frac{4(B_{f_0}+G) + \pi^2D/3}{\beta}}.
		\end{eqnarray}
		\normalsize
		Now plugging \eqref{eq:term1avgk-nc-const} and \eqref{eq:feasibility-t-nc-bound} into \eqref{eq:twoterms} gives the desired result.
	\end{proof}
	
	\begin{corollary}[complexity result]\label{thm:eps-stationary-const-beta}
		Under the same assumptions of Theorem~\ref{thm:complexity-constant-beta-new}, let 
		$\beta=\frac{36(B_{f_0}+G) + 3\pi^2D}{\varepsilon^2}$ and 
		$\textstyle K = \left\lceil\max\Big\{ \frac{9\mathcal{C}_3}{\varepsilon^2} , \frac{\pi^2}{2\varepsilon} \Big\}\right\rceil = O\left(\frac{1} {\varepsilon^{4}}\right),$ 
		where $\mathcal{C}_3$ is defined in \eqref{eq:defconstantC-nc-const}. Then
		$\bar\bx^{(R_K)}$ is a weak $\varepsilon$-stationary point of \eqref{eq:gco-cvx}, and
		the total complexity to produce $\bar\bx^{(R_K)}$ 
		is $\tilde O\left(1/\varepsilon^4\right).$
	\end{corollary}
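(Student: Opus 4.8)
The plan is to feed the convergence-rate bound \eqref{eq:convergeinK1-nc} from Theorem~\ref{thm:complexity-constant-beta-new} into a parameter choice that forces each of its three terms below $\varepsilon/3$. The key observation is that the three terms decouple: the last term $\sqrt{(4(B_{f_0}+G)+\pi^2 D/3)/\beta}$ depends only on $\beta$, while the first two, $\frac{\pi^2}{6K}$ and $\frac{\sqrt{\mathcal{C}_3}}{\sqrt K}$, are controlled by $K$ once $\beta$ (hence $\mathcal{C}_3$) is fixed. First I would select $\beta$ to control the last term: since $36(B_{f_0}+G)+3\pi^2D = 9\big(4(B_{f_0}+G)+\pi^2 D/3\big)$, the stated choice $\beta = \frac{36(B_{f_0}+G)+3\pi^2 D}{\varepsilon^2}$ makes that term equal to $\varepsilon/3$ exactly.

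With $\beta$ fixed, I would then bound the remaining two terms. Requiring $\frac{\sqrt{\mathcal{C}_3}}{\sqrt K}\le \frac{\varepsilon}{3}$ gives $K\ge \frac{9\mathcal{C}_3}{\varepsilon^2}$, and requiring $\frac{\pi^2}{6K}\le \frac{\varepsilon}{3}$ gives $K\ge \frac{\pi^2}{2\varepsilon}$; taking $K=\lceil\max\{9\mathcal{C}_3/\varepsilon^2,\pi^2/(2\varepsilon)\}\rceil$ therefore yields $\max\{\textbf{S}_{R_K},\textbf{F}_{R_K}\}\le\varepsilon$. Because $\textbf{S}_{R_K}$ and $\textbf{F}_{R_K}$ are exactly the left-hand sides of \eqref{eq:eKKT-df} and \eqref{eq:eKKT-pf} evaluated at $\bar\bx^{(R_K)}$ with the multipliers $(\bar\by^{(R_K)},\bar\blambda^{(R_K)})$ set in Algorithm~\ref{alg:iPPP}, this certifies that $\bar\bx^{(R_K)}$ is a \emph{weak} $\varepsilon$-stationary point; complementary slackness \eqref{eq:ecomplementaryslackness} is not claimed here, which is consistent with the use of Option II (it minimizes $\max\{\textbf{S}_l,\textbf{F}_l\}$ and does not involve $\textbf{C}_l$).

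To read off the order of $K$, I would note that $\mathcal{C}_3$ in \eqref{eq:defconstantC-nc-const} is affine in $\beta$, so $\mathcal{C}_3 = O(\beta) = O(1/\varepsilon^2)$; hence $9\mathcal{C}_3/\varepsilon^2 = O(1/\varepsilon^4)$ dominates $\pi^2/(2\varepsilon)$, giving $K=O(1/\varepsilon^4)$. For the total cost I would invoke Theorem~\ref{thm:conv-adap-APG1} on each subproblem. The function $\phi_k$ is $(\gamma_k-\Gamma_k)$-strongly convex and $L_{\phi_k}$-smooth; with the constant choice $\beta_k=\beta$ and $\gamma_k=2(\rho_0+\beta\rho_c)$ one has $\gamma_k-\Gamma_k=\rho_0+\beta\rho_c=\Gamma_k$ and, from \eqref{eq:Lk-case2}, $L_{\phi_k}=O(\Gamma_k+\beta)$, so the condition number stays $O(1)$ and each call of Algorithm~\ref{alg:adap-APG1} costs $T_k=\tilde O(1)$ proximal gradient steps (the $\log(1/\hat\varepsilon_k)$ and $\log\kappa$ factors being polylogarithmic and absorbed into $\tilde O$). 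Summing, $T_{\mathrm{total}}=\sum_{k=0}^{K-1}T_k=\tilde O(K)=\tilde O(1/\varepsilon^4)$.

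The routine part is the arithmetic of splitting $\varepsilon$ into thirds; the one place that genuinely needs care is the per-iteration bound $T_k=\tilde O(1)$. This hinges on the cancellation $\gamma_k-\Gamma_k=\Gamma_k=\Theta(\beta)$ matching the $\Theta(\beta)$ growth of $L_{\phi_k}$, so that the penalty parameter $\beta=\Theta(1/\varepsilon^2)$ inflates numerator and denominator of the condition number equally. This is exactly where the weak-convexity structure (via the factor-of-two proximal weight $\gamma_k=2\Gamma_k$) is used, and it is worth verifying that $\rho_c>0$, so that $\Gamma_k$ does grow with $\beta$ and the $O(1)$ conditioning is genuine.
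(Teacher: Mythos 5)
Your proposal is correct and follows essentially the same route as the paper's proof: choose $\beta$ so that the $\beta$-dependent term in \eqref{eq:convergeinK1-nc} is at most $\varepsilon/3$, choose $K$ so the remaining two terms contribute at most $2\varepsilon/3$, and then use the cancellation $\gamma_k-\Gamma_k=\rho_0+\beta\rho_c=\Theta(\beta)=\Theta(L_{\phi_k})$ to get $T_k=\tilde O(1)$ per subproblem and $\tilde O(K)=\tilde O(1/\varepsilon^4)$ overall. Your closing caveat that the $O(1)$ conditioning genuinely requires $\rho_c>0$ is a fair observation (the paper leaves this implicit), but it does not affect the correctness of the argument in the intended non-convex-constraint setting.
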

	\begin{proof}
		With the chosen $\beta$ and $K$, it holds that $\sqrt{\frac{4(B_{f_0}+G) + \pi^2D/3}{\beta}}\leq \frac{\varepsilon}{3}$ and $\frac{\pi^2}{6K}+ \frac{\sqrt{\mathcal{C}_3}}{\sqrt{K}}\le \frac{2\varepsilon}{3}$.  
		Hence, by \eqref{eq:convergeinK1-nc}, $\bar\bx^{(R_K)}$ is a weak $\varepsilon$-stationary point of \eqref{eq:gco}.
		
		Let $T_k$ be the number of proximal gradient steps performed by Algorithm~\ref{alg:adap-APG1} in the $k$-th call by Algorithm~\ref{alg:iPPP}.
		Notice that with the parameters set in \eqref{eq:set-beta-constant}, $\phi_k$ defined in \eqref{eq:compositepart} is $(\rho_0+\beta\rho_c)$-strongly convex, and in addition, its smoothness constant $L_{\phi_k}=\Theta(\gamma_k+\beta_k)=\Theta(\beta)$. Hence, according to Theorem~\ref{thm:conv-adap-APG1},
		$
		T_k=\tilde O\left(\sqrt{\frac{L_{\phi_k}}{\rho_0+\beta\rho_c}}\right)
		=\tilde O\left(1\right)
		$ 
		for all $k\ge 0$. Therefore, the total complexity  
		$T_{\mathrm{total}} = \sum_{k=0}^{K-1} T_k = \tilde O\left(K\right)= \tilde O(1/\varepsilon^4),$ which completes the proof.
	\end{proof}

	\begin{remark}
		Notice that in Corollary~\ref{thm:eps-stationary-const-beta}, we only guarantee a weak $\varepsilon$-stationary point because no constraint qualification (CQ) is assumed. Without a CQ, even a global optimal solution is not guaranteed to be a KKT point. 
	\end{remark}

	\section{Numerical Experiments}
	\label{sec:exp}
	In spite of the theoretical focus of this paper, we want to evaluate the numerical performance of the iPPP method on a multi-class Neyman-Pearson classification (mNPC) problem in this section. Suppose there is a set of training data with $K$ classes, denoted by $\mathcal{D}_k\subseteq\mathbb{R}^d$ for $k=1,2,\dots,K$. 
	The goal is to learn $K$ linear models $\bx_k$, $k=1,2,\dots,K$ and predict the class of a data point $\xi$ as 
	$\argmax_{k=1,2,\dots,K}\bx_k^\top \xi$. To achieve a high classification accuracy, $\{\bx_k\}$ is found such that $\bx_k^\top \xi-\bx_l^\top \xi$ is positively large for any $k\neq l$ and any $\xi\in\mathcal{D}_k$~\cite{weston1998multi,crammer2002learnability}. This leads to minimizing the average loss 
	$
	\frac{1}{|\mathcal{D}_k|}\sum_{l\neq k}\sum_{\xi\in\mathcal{D}_k}\phi(\bx_k^\top \xi-\bx_l^\top \xi),
	$  
	where $\phi$ is a non-increasing (potentially non-convex) loss function. Suppose misclassifying $\xi$ has a cost depending on its true class label $k$.
	When training these $K$ linear models, the mNPC prioritizes minimizing the loss on one class, say $\mathcal{D}_1$, and meanwhile controls the losses on other classes, namely, 
	\begin{eqnarray}
		\label{eq:NPclassification_multi}
		\min_{\|\bx_k\|\leq\lambda,k=1,\dots,K}&& \textstyle \frac{1}{|\mathcal{D}_1|}\sum_{l > 1}\sum_{\xi\in\mathcal{D}_1}\phi(\bx_1^\top \xi-\bx_l^\top \xi),\\\nonumber
		\mathrm{s.t.}&&\textstyle \frac{1}{|\mathcal{D}_k|}\sum_{l\neq k}\sum_{\xi\in\mathcal{D}_k}\phi(\bx_k^\top \xi-\bx_l^\top \xi)\leq r_k,\quad k=2,3,\dots,K.
	\end{eqnarray}
	Here, $r_k$ controls the loss for $\mathcal{D}_k$, and $\lambda > 0$ is a regularization parameter. 
	
	We created test instances of \eqref{eq:NPclassification_multi} using the LIBSVM  multi-class classification datasets  \textit{covtype} and \textit{mnist}, which have $K=7$ and $K=10$ classes, respectively. 
	The first class of each dataset is used to formulate the objective function in \eqref{eq:NPclassification_multi}, and the other classes are used to formulate the constraints. The function $\phi$ in \eqref{eq:NPclassification_multi} is chosen as the sigmoid function $\phi(z)=1/(1+\exp(z))$. We set $ r_k=0.5(K-1),\,\forall\, k=2,\dots,K$ and set $\lambda = 0.3$ for both datasets.
	
	
	We compare the proposed method to the exact penalty method proposed in \cite{cartis2011evaluation}. We choose \cite{cartis2011evaluation} because \eqref{eq:NPclassification_multi} satisfies their assumptions and the complexity of \cite{cartis2011evaluation} can be lower or higher than ours (depending on the boundedness of their penalty parameters) while most of other methods either have a higher complexity than ours or have no theoretical guarantee because \eqref{eq:NPclassification_multi} does not satisfy their assumptions (e.g. linear constraints). 
	
	Both methods are implemented in Matlab on a 64-bit MacOS Catalina machine with a 4.20 Ghz Intel Core i7-7700K CPU and 16GB of memory. For both algorithms, the initial iterate is set to $\bx^{(0)}=\mathbf{0}$ and we verify that it is a feasible solution of \eqref{eq:NPclassification_multi} with $r_k$'s chosen above. We also discuss how Assumption~\ref{assume:nonconvexconstraintsingular} can potentially hold for problem \eqref{eq:NPclassification_multi} in Appendix~\ref{sec:append}.
	
	On solving \eqref{eq:gco} with $g\equiv 0$, the method in \cite{cartis2011evaluation} applies a non-smooth trust-region method to solve a sequence of unconstrained subproblems in the form of 
	\small
	\begin{eqnarray}
		\label{eq:exactpenalty}
		\min_{\bx}f_0(\bx)+\rho {\textstyle \sum_{i=1}^m  \big[f_i(\bx)\big]_+}+\rho {\textstyle \sum_{j=1}^n  \big|c_j(\bx)\big|},
	\end{eqnarray}
	\normalsize
	where $\rho>0$ is a penalty parameter which will be increased sequentially.  At iteration $k$ of the non-smooth trust-region method for solving \eqref{eq:exactpenalty}, an updating direction is computed as\footnote{The method in \cite{cartis2011evaluation} allows using any norm in the ball constraint of \eqref{eq:exactpenalty-tr}. Here, we choose $\ell_1$-norm so that \eqref{eq:exactpenalty-tr} can be solved as a linear program.  } 
	\small
	\begin{align}
		\label{eq:exactpenalty-tr}
		\mathbf{s}^{(k)}\in\argmin_{\|\mathbf{s}\|_1\leq \Delta_k}
		\left\{
		\begin{array}{l}
			f_0(\bx^{(k)})+\nabla f_0(\bx^{(k)})^\top\mathbf{s} 
			+\rho \sum_{i=1}^m\left [f_i(\bx^{(k)})+\nabla f_i(\bx^{(k)})^\top\mathbf{s}\right ]_+  \\  
			+\rho \sum_{j=1}^n\left |c_j(\bx^{(k)})+\nabla c_j(\bx^{(k)})^\top\mathbf{s}\right | 
		\end{array}
		\right\},
	\end{align}
	\normalsize
	where $\Delta_k$ is the radius of the trust region. Upon obtaining $\bs^{(k)}$, the estimated solution is updated to $\bx^{(k+1)}=\bx^{(k)}+\mathbf{s}^{(k)}$ if this update significantly reduces the objective value of \eqref{eq:exactpenalty}. Once an $\varepsilon$-critical point of~\eqref{eq:exactpenalty} (see equation (2.2) in~\cite{cartis2011evaluation} for the definition) is found, a steering procedure~\cite{byrd2005convergence}
	is utilized to increase the penalty parameter $\rho$ in~\eqref{eq:exactpenalty}. 
	
	The values of algorithm-related parameters in both algorithms are selected from a discrete set of candidates based on the value of the objective function after 10,000 data passes. In our implementation,  we formulate the problem in \eqref{eq:exactpenalty-tr}  as a linear program 
	and then use Matlab built-in LP solver to obtain $s_k$. The outer iterations in the method by \cite{cartis2011evaluation} require a steering parameter $\xi$, an increase factor to update $\rho$, an initial value of $\rho$, and a tolerance for solving subproblem \eqref{eq:exactpenalty}. Steering parameter $\xi$ is set to be 0.3 for \textit{covtype} and 0.1 for \textit{mnist}. The initial value of $\rho$ is set to be $ 1 / \xi$ for both datasets. We choose the increase factor to be 10 and tolerance $\varepsilon = 0.001$ for both datasets. Moreover, the trust-region method for solving subproblem~\eqref{eq:exactpenalty} requires five control parameters: $\Delta_0$, $\eta_1$, $\eta_2$, $\gamma_1$, and $\gamma_2$. For both datasets, we choose $\Delta_0 = 1, \eta_1 = 0.3, \eta_2 = 0.7$, $\gamma_1 = 0.3$, and $\gamma_2 = 0.7$. 
	
	\begin{figure*}[tbhp]
		\centering
		{\includegraphics[scale=.2]{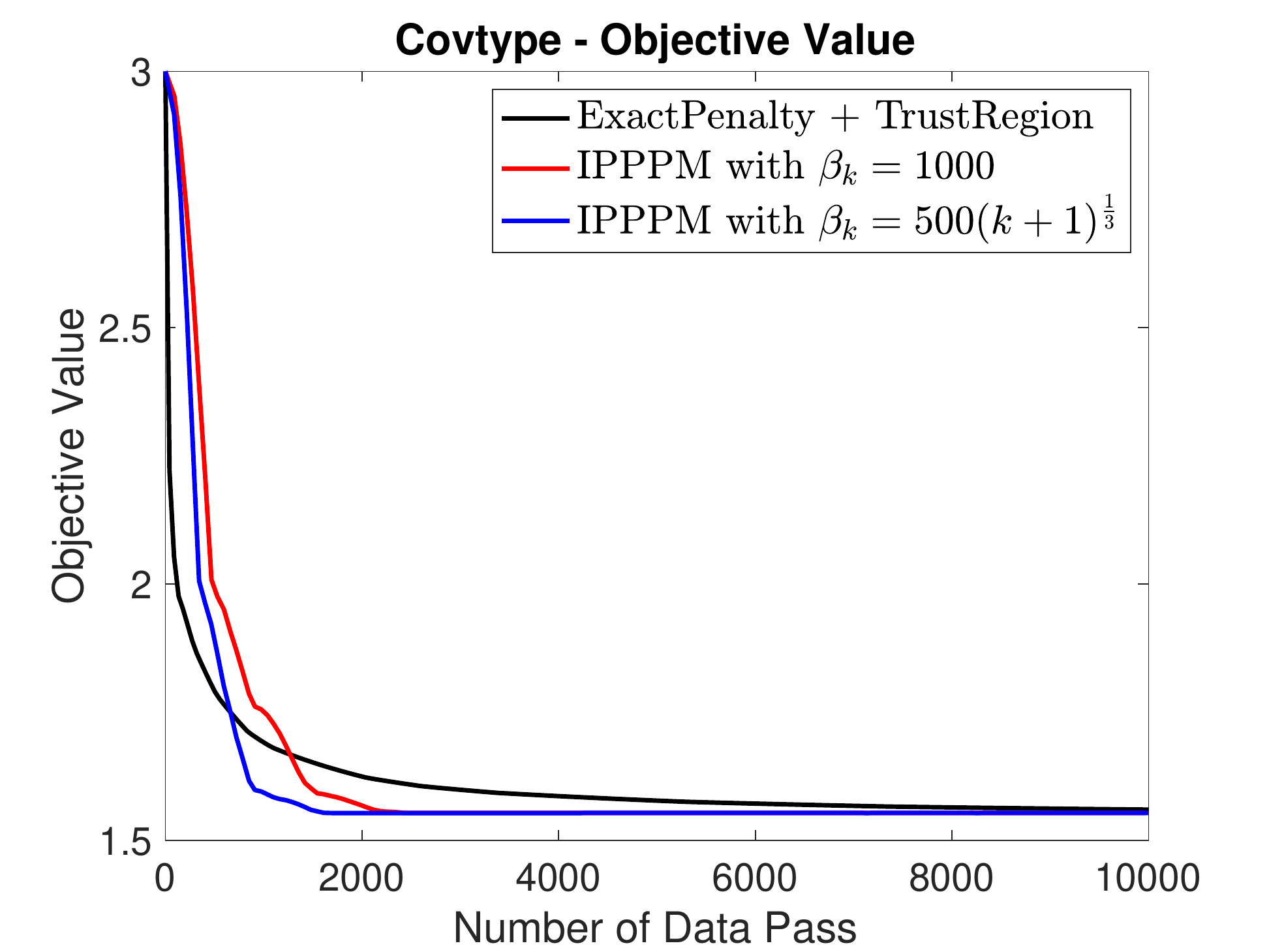}}
		{\includegraphics[scale=.2]{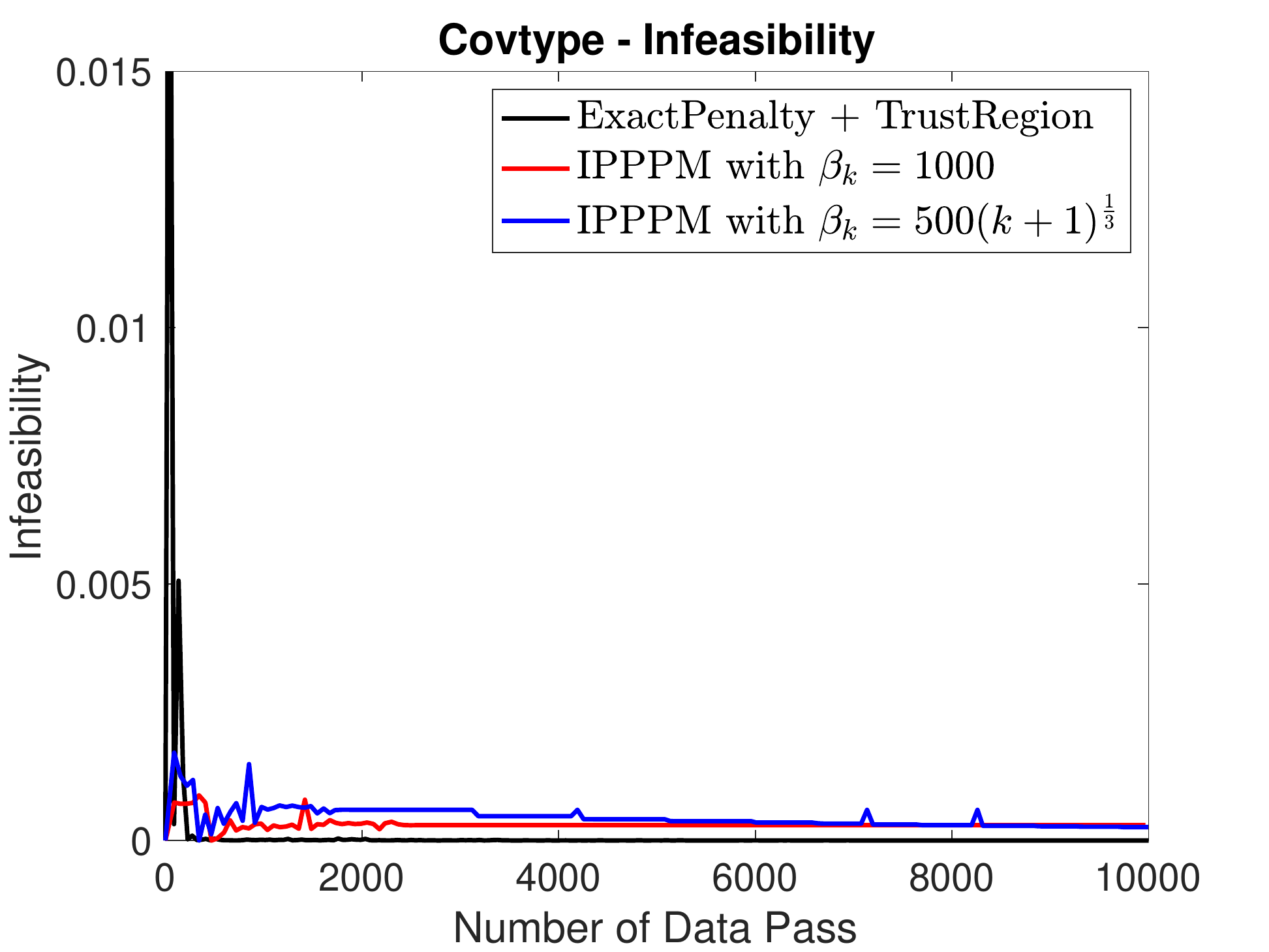}}
		{\includegraphics[scale=.2]{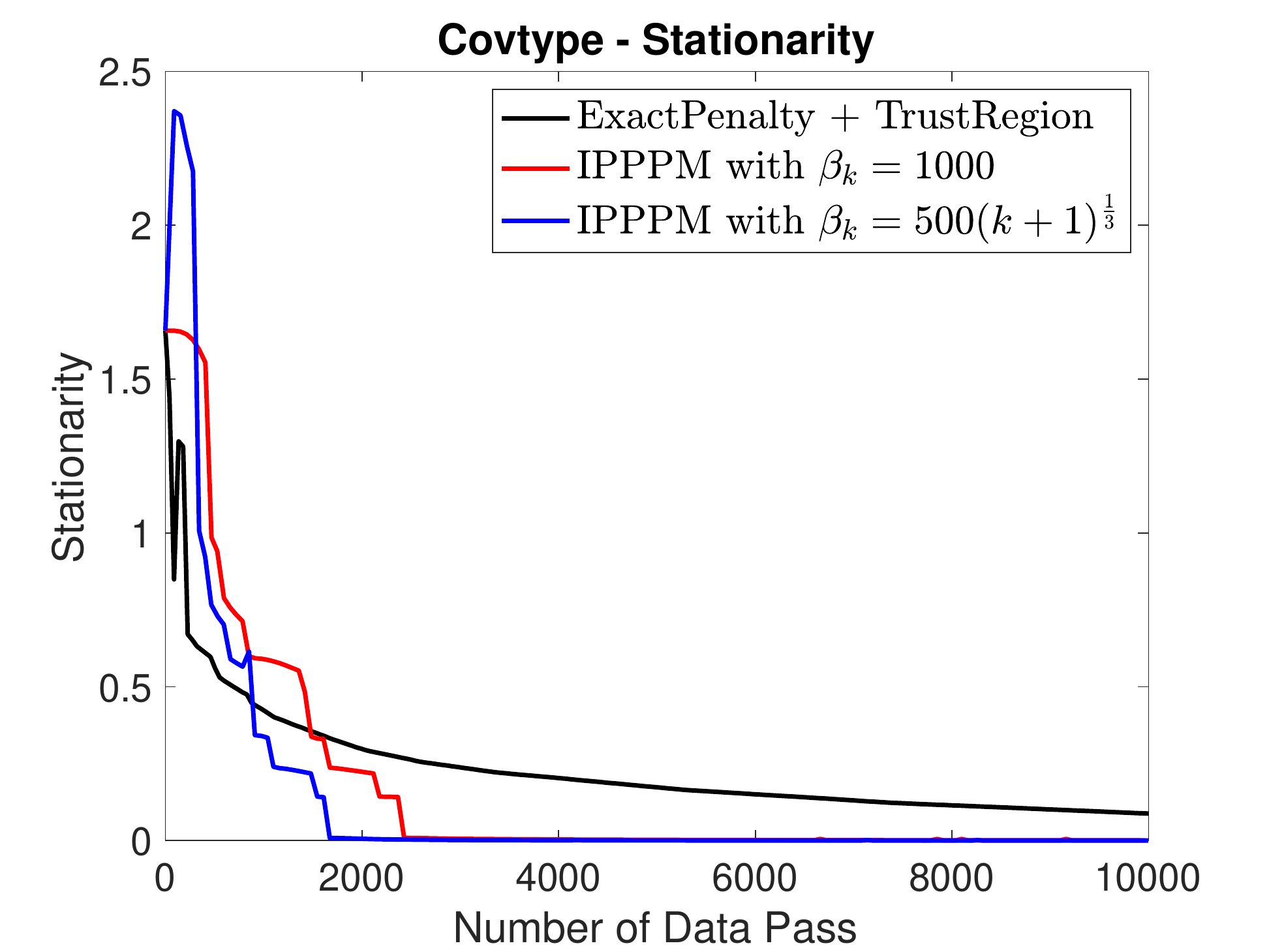}}\\
		{\includegraphics[scale=.2]{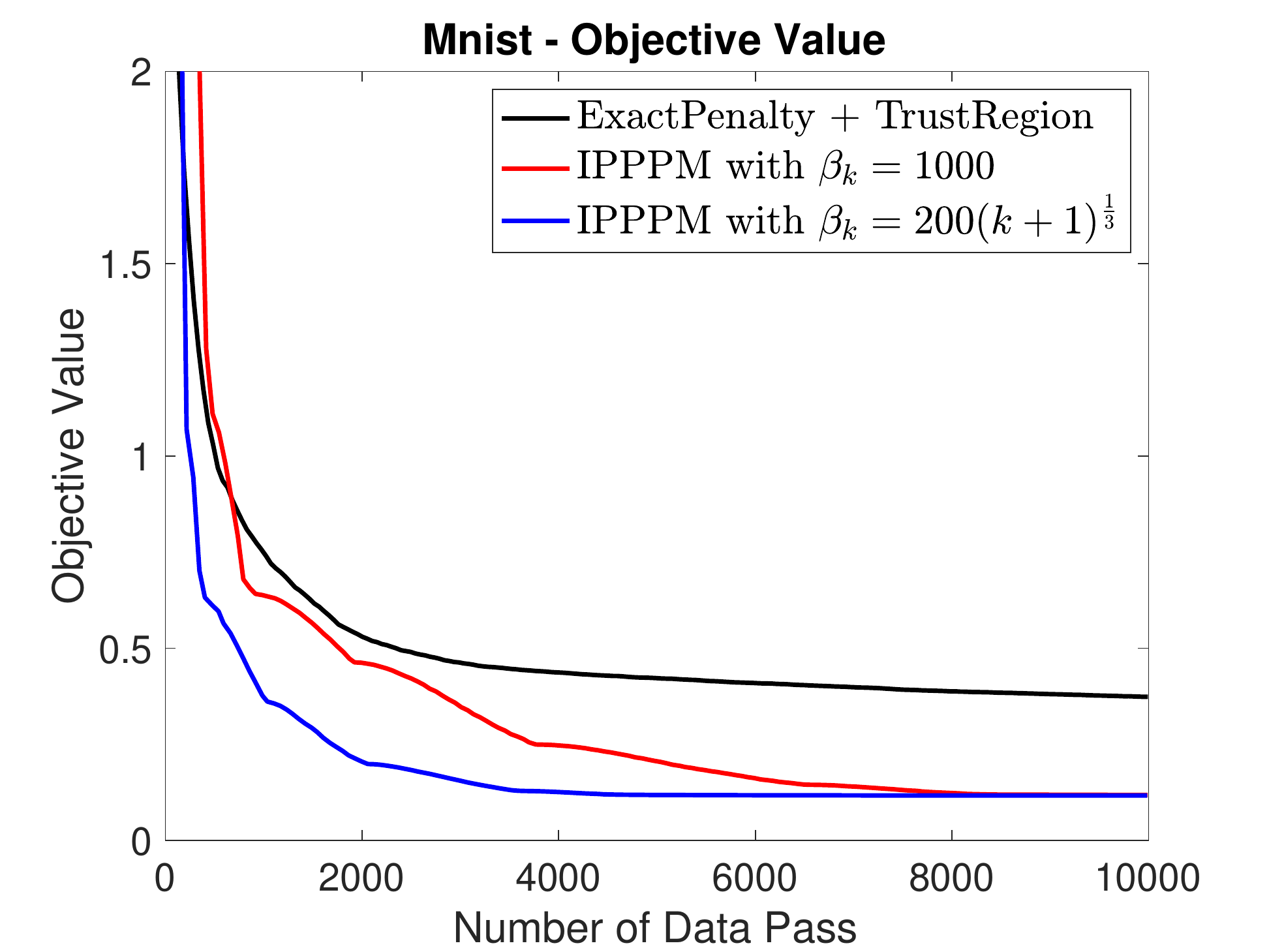}}
		{\includegraphics[scale=.2]{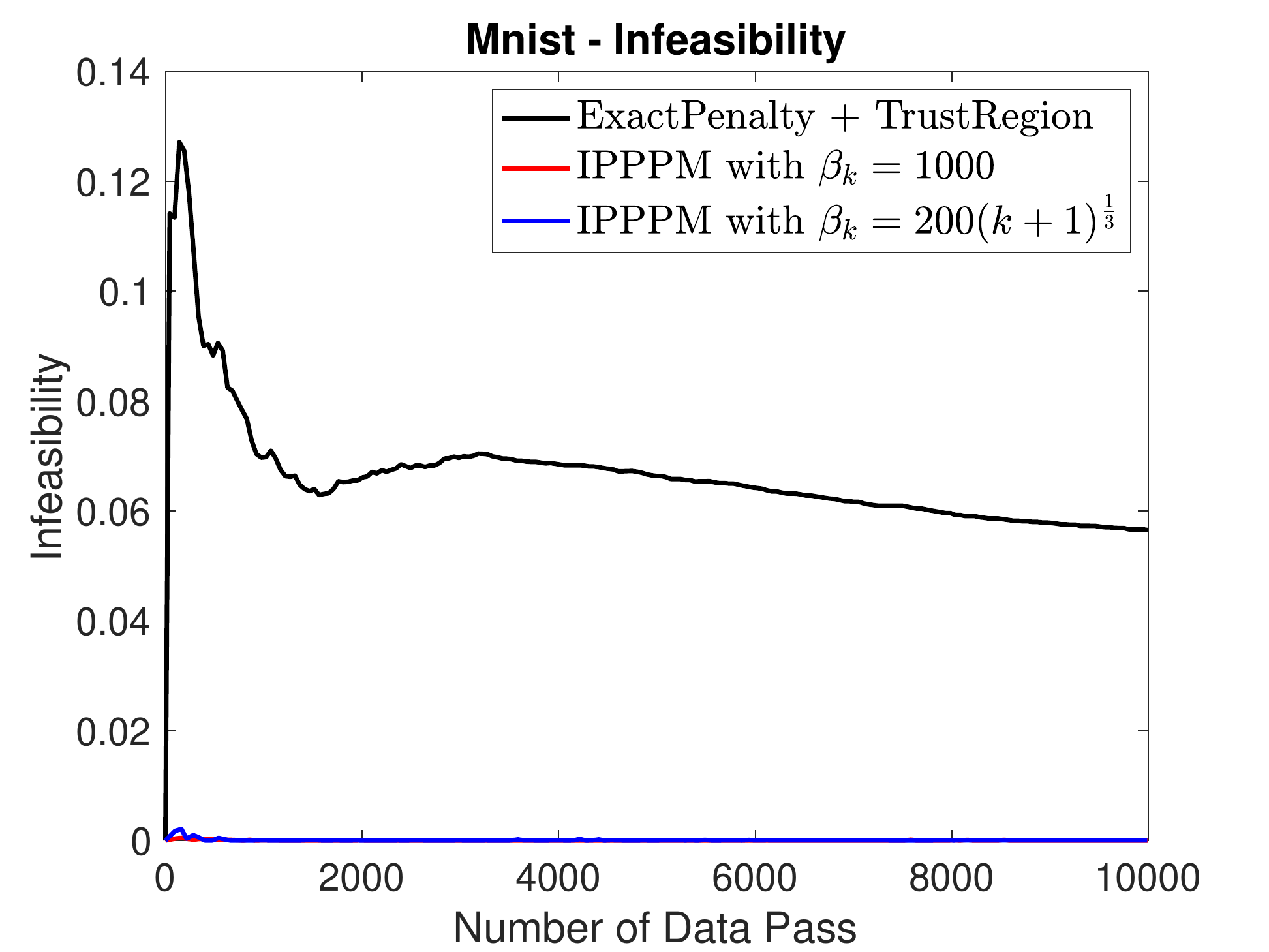}} 
		{\includegraphics[scale=.2]{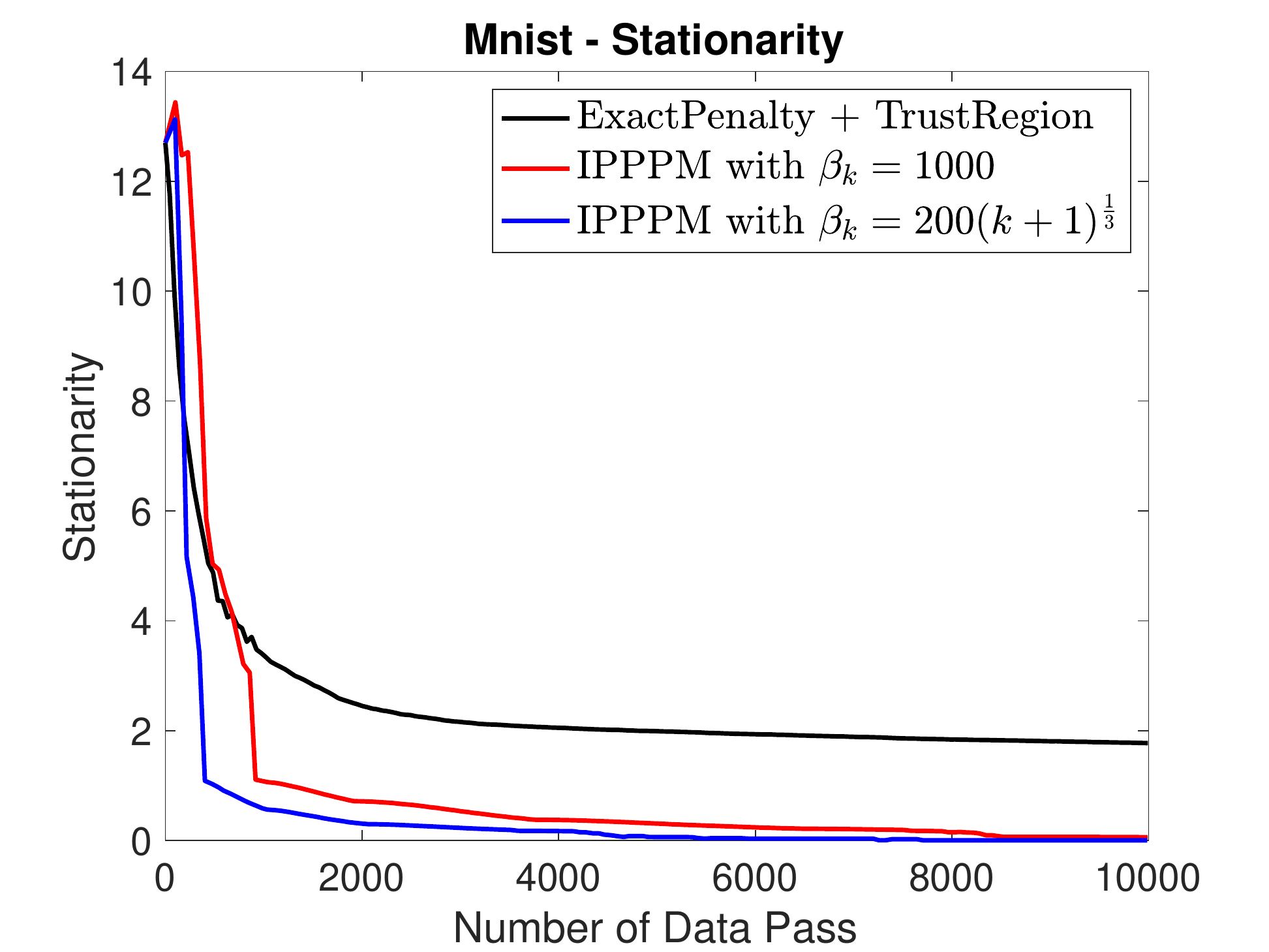}}
		\caption{Comparison between the iPPP method and the trust-region-based penalty method in \cite{cartis2011evaluation} for solving multi-class Neyman-Pearson classification problem \eqref{eq:NPclassification_multi} on two datasets from LIBSVM.} 
		\label{fig:neyman_pearson}
	\end{figure*}

	For our iPPP method, we need to specify the parameters  $\hat{\varepsilon}_k, \gamma_k$ and $\beta_k$ for each $k$ as well as constant $M^{\text{ini}}$ and $\mu^{\text{ini}}$. The inner algorithms also require parameters $\gamma_{\text{inc}}, \gamma_{\text{dec}}, \gamma_{\text{sc}} $, and $\theta_{\text{sc}}$. We set $M^{\text{ini}}=10$, $\mu^{\text{ini}}=1$, $\gamma_{\text{inc}} = 1.5$, $\gamma_{\text{dec}} = \gamma_{\text{sc}}  = 1.2$, and $\theta_{\text{sc}} = 0.5$. For other parameters, we compare two different settings: one usinng $\hat{\varepsilon}_k = 1/(k+1)^2, \gamma_k=0.1, \beta_k = 1000, \forall\, k$, and the other using $\hat{\varepsilon}_k = \frac{1}{\beta(k+1)^{\frac{4}{3}}}, \gamma_k=0.1(k+1)^{\frac{1}{3}}, \beta_k = \beta(k+1)^\frac{1}{3}, \,\forall\, k$.
	In the latter setting, we choose $\beta=200$ for \textit{mnist} and choose $\beta=500$ for \textit{covtype}.

	The numerical results are presented in Figure \ref{fig:neyman_pearson}. The $x$-axis represents the number of data passes each algorithm performs. The $y$-axis represents the objective value of iterates in the first column, the infeasibility of iterates (i.e., $\max_{i=1,\dots,m}\lbrace f_i(\bx), 0 \rbrace$) in the second column,  and the stationarity of iterates in the third column. Let $I(\bx)=\{1\leq i\leq m|f_i(\bx)\geq 0\}$ and $\X=\{\bx=(\bx_1,\dots,\bx_K)\, |\, \|\bx_k\|\leq\lambda,k=1,\dots,K\}$. We calculate the stationarity of a solution $\bx$ as the optimal objective value of the following convex optimization
	\begin{eqnarray*}
		\min_{\blambda\in\mathbb{R}_+^{|I(\bx)|},\by\in\mathbb{R}^n}
		\text{dist}\Bigg(\nabla f_0(\bx^{(k)})+ \sum_{i\in I(\bx)}\lambda_i\nabla f_i(\bx^{(k)})+ \sum_{j=1}^ny_j\nabla c_j(\bx^{(k)}),-\mathcal{N}_\X(\bx)
		\Bigg),
	\end{eqnarray*}
	which can be solved as a convex quadratic program and we solve using Matlab built-in QP solver. We observe from Figure~\ref{fig:neyman_pearson} that, for these two instances, our iPPP method outperforms the trust-region-based penalty method by \cite{cartis2011evaluation} in terms of its capability of improving objective value, feasibility, and stationarity of the iterates simutaneously. Moreover, these two instances also suggest that the iPPP method using growing penalty parameters performs better than using a fixed penalty parameter.

	\section{Conclusion}
	\label{sec:conclusion}
	We proposed a gradient-based penalty method for a constrained non-convex optimization problem. The complexity of the proposed algorithm for finding an approximate stationary point is derived for two cases: (i) when the objective function is non-convex but the constraint functions are convex and, (ii) when the objective and constraint functions are all non-convex. For the first case, our method can produce an $\varepsilon$-stationary point with complexity of $\tilde O(\varepsilon^{-5/2})$ under Slater's condition. For the second case, the complexity is $\tilde O(\varepsilon^{-3})$ if a non-singularity condition holds on the constraints and otherwise $\tilde O(\varepsilon^{-4})$ if an initial feasible solution is assumed. 
	

	%
	%

	\bibliographystyle{spmpsci}      
	\bibliography{references}
	
	\appendix
	
	\section*{Appendix: Discussion on Assumption~\ref{assume:nonconvexconstraintsingular} for application \eqref{eq:NPclassification_multi}}
	\label{sec:append}
	\normalsize
	We explain that Assumption~\ref{assume:nonconvexconstraintsingular} can hold for the tested problem \eqref{eq:NPclassification_multi}. For simplicity, we consider 
	the case of $K=2$, and in this case, we have a single inequality constraint in the form of 
	$$f(\bx) := \frac{1}{N_2}\sum_{i=1}^{N_2}\phi(\bx_2^\top\xi_i - \bx_1^\top \xi_i) - r_2,$$
	where $\bx=[\bx_1;\bx_2]$, $\phi(z) = 1/(1+\exp(z))$, $N_2$ denotes the number of data points in $\mathcal{D}_2$, and $\xi_i$ is the $i$-th data point in $\mathcal{D}_2$. In addition, let $\X=\{(\bx_1,\bx_2): \|\bx_1\|\le \lambda, \|\bx_2\|\le \lambda\}$. It is easy to have
	$$
	\mathcal{N}_\X(\bx)=\left\{
	\begin{array}{ll}
		\{\vzero\}, & \text{ if }\|\bx_1\| < \lambda, \|\bx_2\| < \lambda, \\[3pt]
		\{\vzero\} \times \{a_2 \bx_2: a_2\ge0\}, & \text{ if }\|\bx_1\| < \lambda, \|\bx_2\| = \lambda, \\[3pt]
		\{a_1 \bx_1: a_1\ge0\} \times \{\vzero\}, & \text{ if }\|\bx_1\| = \lambda, \|\bx_2\| < \lambda, \\[3pt]
		\{a_1 \bx_1: a_1\ge0\} \times \{a_2 \bx_2: a_2\ge0\}, & \text{ if }\|\bx_1\| = \lambda, \|\bx_2\| = \lambda.
	\end{array}
	\right.
	$$
	\normalsize
	The condition in Assumption~\ref{assume:nonconvexconstraintsingular} reduces to 
	\small
	\begin{equation}\label{eq:non-sing-cond-app}
		\exists\, \nu>0 \text{ such that }\nu [f(\bx)]_+ \le \mathrm{dist}\big([f(\bx)]_+ \nabla f(\bx), - \mathcal{N}_\X(\bx)\big), \,\forall\, \bx\in\X.
	\end{equation}
	\normalsize
	
	Let 
	\small
	$$\bE=\frac{1}{N_2}[\xi_1, \ldots, \xi_{N_2}], \quad u_i(\bx) = \phi'(\bx_2^\top\xi_i - \bx_1^\top \xi_i),\, \text{ for }i=1,\ldots, N_2.$$
	\normalsize
	Then 
	\small
	$$\nabla f(\bx) = \frac{1}{N_2}\sum_{i=1}^{N_2}\phi'(\bx_2^\top\xi_i - \bx_1^\top \xi_i)[-\xi_i; \xi_i] = [-\bE \bu(\bx); \bE \bu(\bx)].$$
	\normalsize
	When $f(\bx) \le 0$, the condition in \eqref{eq:non-sing-cond-app} trivially holds for any $\nu >0$. Below, we assume the feasibility of the origin, i.e., $f(\vzero) \le 0$ as in our numerical experiment and also
	\small
	\begin{equation}\label{eq:cor-data}
		\xi_i\neq \vzero, \,\forall\, i, \quad \xi_i^\top \xi_j \ge 0,\, \forall\, i, j, 
	\end{equation}
	\normalsize
	which can be ensured by lifting the data point one more dimension, i.e., $\xi_i \gets [\xi_i; c],\forall\, i$ for some $c>0$. We establish the condition in \eqref{eq:non-sing-cond-app} through discussing three cases on $\bx$ with $f(\bx) > 0$.
	
	
	\noindent\textbf{Case I:} $\|\bx_1\| < \lambda, \|\bx_2\| < \lambda$. In this case, $\mathcal{N}_\X(\bx)= \{\vzero\}$, and thus the inequality in \eqref{eq:non-sing-cond-app} becomes $\nu \le \|\nabla f(\bx)\|$. Let
	\small
	\begin{equation}\label{eq:nu-1}
		\nu_1 = \inf_{\substack{\|\bx_1\| < \lambda, \|\bx_2\| < \lambda}} \|\nabla f(\bx)\| = \min_{\bx\in\X} \sqrt{2}  \|\bE \bu(\bx)\|.
	\end{equation}
	\normalsize
	Since $\phi'(\bx)<0$ for any $\bx$, there exist $\eta_1>0, \eta_2 > 0$ such that $-\eta_2 \le u_i(\bx) \le -\eta_1, \, \forall\, i$ for all $\bx\in\X$. Hence $\nu_1 > 0$ by \eqref{eq:cor-data}. 
	
	
	\noindent\textbf{Case II:} $\|\bx_1\| < \lambda, \|\bx_2\| = \lambda$ or $\|\bx_1\| = \lambda, \|\bx_2\| < \lambda$. We only consider the former because the latter can be discussed in the same way. In the former case, $\mathcal{N}_\X(\bx)=\{\vzero\} \times \{a_2 \bx_2: a_2\ge0\}$, and the inequality in \eqref{eq:non-sing-cond-app} becomes 
	\small
	$$\big(\nu [f(\bx)]_+\big)^2 \le \big\|[f(\bx)]_+\bE \bu(\bx)\big\|^2 + \min_{a_2\ge 0}\big\|[f(\bx)]_+ \bE \bu(\bx) + a_2\bx_2\big\|^2,$$ 
	\normalsize
	which is implied by  the fact that $\nu \le  \|\bE \bu(\bx)\|$ with $\nu = \nu_1/\sqrt{2}$ and $\nu_1$ defined in \eqref{eq:nu-1}. 
	
	
	\noindent\textbf{Case III:} $\|\bx_1\| = \lambda, \|\bx_2\| = \lambda$. In this case, $\mathcal{N}_\X(\bx)=\{a_1 \bx_1: a_1\ge0\} \times \{a_2 \bx_2: a_2\ge0\}$, and the inequality in \eqref{eq:non-sing-cond-app} becomes
	\small 
	$$\big(\nu [f(\bx)]_+\big)^2 \le \min_{a_1\ge 0}\big\|[f(\bx)]_+ \bE \bu(\bx) - a_1\bx_1\big\|^2 + \min_{a_2\ge 0}\big\|[f(\bx)]_+ \bE \bu(\bx) + a_2\bx_2\big\|^2,$$
	\normalsize
	which, as $f(\bx)>0$, is equivalent to
	\small 
	$$\nu^2 \le \min_{a_1\ge 0}\big\|\bE \bu(\bx) - a_1\bx_1\big\|^2 + \min_{a_2\ge 0}\big\|\bE \bu(\bx) + a_2\bx_2\big\|^2.$$
	\normalsize
	Let $\nu_2\ge0$ be defined as 
	\small
	\begin{equation}\label{eq:nu-2}
		\nu_2^2 = \min_{\substack{\|\bx_1\| = \lambda\\ \|\bx_2\| = \lambda \\ f(\bx) \ge0}} \left\{\min_{a_1\ge 0}\big\|\bE \bu(\bx) - a_1\bx_1\big\|^2 + \min_{a_2\ge 0}\big\|\bE \bu(\bx) + a_2\bx_2\big\|^2\right\}.
	\end{equation}
	\normalsize
	Notice that the minimum of the above problem is reached at a point $\bar\bx$ and numbers $\bar a_1$ and $\bar a_2$. Suppose $\nu_2=0$. It must hold that $\bar\bx_1=\lambda \frac{\bE \bu(\bar\bx)}{\|\bE \bu(\bar\bx)\|}$ and $\bar\bx_2=-\lambda \frac{\bE \bu(\bar\bx)}{\|\bE \bu(\bar\bx)\|}$ with the corresponding $\bar a_1 = \bar a_2 = \frac{\|\bE \bu(\bar\bx)\|}{\lambda}$. Since $u_i(\bar\bx) < 0,\,\forall\, i$, we have from \eqref{eq:cor-data} that $\bar\bx_2^\top \xi_i - \bar\bx_1^\top \xi_i =-\frac{\lambda}{\|\bE \bu(\bar\bx)\|}\xi_i^\top \bE \bu(\bar\bx)>0$ for all $i$. This means $f(\bar\bx) < f(\vzero) \le 0$ by the monotonicity of $\phi$, which contradicts with the fact that $f(\bar\bx) \ge0$. Therefore, we must have $\nu_2>0$.
	
	By the above discussions, we can set $\nu = \min\{\nu_1/\sqrt{2},\, \nu_2\} >0$ to ensure condition \eqref{eq:non-sing-cond-app}, which gives the following conclusion.
	
	\vspace{0.2cm}
	\noindent\textbf{Claim:}  Assumption~\ref{assume:nonconvexconstraintsingular} can hold for the tested problem \eqref{eq:NPclassification_multi}.

\end{document}